\providecommand{\U}[1]{\protect\rule{.1in}{.1in}}
\providecommand{\U}[1]{\protect\rule{.1in}{.1in}}
\providecommand{\U}[1]{\protect\rule{.1in}{.1in}}
\providecommand{\U}[1]{\protect\rule{.1in}{.1in}}
\providecommand{\U}[1]{\protect\rule{.1in}{.1in}}
\providecommand{\U}[1]{\protect\rule{.1in}{.1in}}
\providecommand{\U}[1]{\protect\rule{.1in}{.1in}}
\providecommand{\U}[1]{\protect\rule{.1in}{.1in}}
\theoremstyle{plain}
\newtheorem{corollary}{Corollary}
\newtheorem{definition}{Definition}
\newtheorem{example}{Example}
\newtheorem{lemma}{Lemma}
\newtheorem{problem}{Problem}
\newtheorem{proposition}{Proposition}
\newtheorem{remark}{Remark}
\newtheorem{theorem}{Theorem}
\numberwithin{equation}{section}
\begin{document}
\title[MF-traces]{MF-traces and a Lower Bound for the Topological Free Entropy Dimension in
Unital C*-algebras}
\author{Don Hadwin}
\address{University of New Hampshire}
\email{don@unh.edu}
\urladdr{http://www.math.unh.edu/\symbol{126}don }
\author{Qihui Li}
\address{East China University of Science and Technology, Shanghai}
\email{lqh991978@gmail.com}
\author{Weihua Li}
\address{Columbia College Chicago}
\email{whli@colum.edu}
\author{Junhao Shen}
\address{University of New Hampshire}
\email{jog2@math.unh.edu}
\thanks{This paper is in final form and no version of it will be submitted for
publication elsewhere.}
\subjclass[2000]{Primary 46L10; Secondary 46L54}
\keywords{topological free entropy dimension, C*-algebra, noncommutative continuous
function, free product}

\begin{abstract}
We continue work on topological free entropy dimension $\delta_{\text{top}}$
in \cite{HS2}, \cite{HS3}, \cite{HLS}. We introduce the notions of MF-trace,
MF-ideal, and MF-nuclearity and use these concepts to obtain upper and lower
bounds for $\delta_{\text{top}}$, and in many cases we obtain an exact formula
for $\delta_{\text{top}}$. We also discuss semicontinuity properties of
$\delta_{\text{top}}$.

\end{abstract}
\maketitle

\section{Introduction}

This paper is a continuation of the work in \cite{HS2}, \cite{HS3}, \cite{HLS}
on D. Voiculescu's topological free entropy dimension $\delta_{\text{top}%
}\left(  x_{1},\ldots,x_{n}\right)  $ for an $n$-tuple $\vec{x}=\left(
x_{1},\ldots,x_{n}\right)  $ of elements in a unital C*-algebra. Our main
results concern the new concept of an $MF$\emph{-trace }on an\emph{ }$MF$
C*-algebra. We prove that the set of $MF$-traces is nonempty, convex and
weak$^{\ast}$-compact. We use $MF$-traces to obtain an important lower bound
for $\delta_{\text{top}}\left(  x_{1},\ldots,x_{n}\right)  $. In particular if
$C^{\ast}\left(  x_{1},\ldots,x_{n}\right)  $ either has no finite-dimensional
representations or infinitely many inequivalent irreducible representations,
then $\delta_{\text{top}}\left(  x_{1},\ldots,x_{n}\right)  \geq1$. We also
define the $MF$-ideal of an MF C*-algebra $\mathcal{A}$ to be the set
$\mathcal{J}_{MF}\left(  \mathcal{A}\right)  $ of all $x\in\mathcal{A}$ such
that $\tau\left(  x^{\ast}x\right)  =0$ for every $MF$-trace $\tau$. We show
that often $\delta_{\text{top}}$ depends on $\mathcal{A}/\mathcal{J}%
_{MF}\left(  \mathcal{A}\right)  $. Additionally, we define the notion of an
MF-nuclear C*-algebra and show that $\delta_{\text{top}}\left(  x_{1}%
,\ldots,x_{n}\right)  \leq1$ when $C^{\ast}\left(  x_{1},\ldots,x_{n}\right)
$ is MF-nuclear, greatly extending our previous result \cite{HLS} showing that
$\delta_{\text{top}}\left(  x_{1},\ldots,x_{n}\right)  \leq1$ when $C^{\ast
}\left(  x_{1},\ldots,x_{n}\right)  $ is nuclear. If $C^{\ast}\left(
x_{1},\ldots,x_{n}\right)  $ is MF-nuclear and residually finite-dimensional
(RFD), then
\[
\delta_{\text{top}}\left(  x_{1},\ldots,x_{n}\right)  =1-\frac{1}%
{\dim\mathcal{A}}.
\]
We also introduce and study two important classes of $MF$ algebras, and we
prove a semicontinuity result for $\delta_{\text{top}}$ restricted to the
second class of algebras.

The organization of the paper is as follows. In section 2 we recall
Voiculescu's definition \cite{DV5} of topological free entropy dimension and
previous results from \cite{HS2}, \cite{HS3}, \cite{HLS}. In section 3 we
introduce MF-traces and the MF-ideal. In section 4, we introduce the notion of
an MF-nuclear C*-algebra. In section 5 we prove a general lower bound for
$\delta_{\text{top}}\left(  x_{1},\ldots,x_{n}\right)  $. Finally, in section
6, we introduce two classes of MF C*-algebras: the class $\mathcal{S}$ of
those algebras for which every trace is an MF-trace, and the class
$\mathcal{W}$ of those C*-algebras whose MF-ideal is trivial. In this section
we also prove a semicontinuity result for $\delta_{\text{top}}\left(
x_{1},\ldots,x_{n}\right)  $ inside $\mathcal{S}\cap\mathcal{W}$, and we
provide examples that show that this semicontinuity generally fails without
severe restrictions.

\section{Definitions and Preliminaries}

In this section, we are going to recall Voiculescu's definition of the
topological free entropy dimension of $n$-tuples of elements in a unital C*-algebra.

We often use tuples like $\left(  y_{1},\ldots,y_{n}\right)  $ and use
$\vec{y}$ to denote them. We use the notation $\vec{y}$ and $\left(
y_{1},\ldots,y_{n}\right)  $ interchangeably. So $\vec{A}$ denotes $\left(
A_{1},\ldots,A_{n}\right)  $.

\subsection{Notation for the GNS Representation.}

Suppose $\tau$ is a tracial state on a unital C*-algebra $\mathcal{A}$. Then
there is a Hilbert space $H,$ a unit vector $e\in H,$ and a representation
$\pi_{\tau}:\mathcal{A}\rightarrow B\left(  H\right)  $ such that $\pi_{\tau
}\left(  \mathcal{A}\right)  e$ is dense in $H$ and, for every $a\in
\mathcal{A}$,%
\[
\tau\left(  a\right)  =\left(  \pi_{\tau}\left(  a\right)  e,e\right)  .
\]
We define the faithful normal trace $\hat{\tau}:\mathcal{\pi}_{\tau}\left(
\mathcal{A}\right)  ^{\prime\prime}\rightarrow\mathbb{C}$ by $\hat{\tau
}\left(  T\right)  =\left(  Te,e\right)  $.

\subsection{Covering Numbers and Box Dimension for a Metric Space.}

Suppose $(X,d)$ is a metric space and $K$ is a subset of $X.$ A family of
balls in $X$ is called a covering of $K$ if the union of these balls covers
$K$ and the centers of these balls lie in $K.$ If $\omega>0$, then the
covering number $\nu_{d}\left(  K,\omega\right)  $ is the smallest cardinality
of a covering of $K$ with $\omega$-balls. Equivalently, an $\omega$-net in $K$
is a subset $E\subseteq K$ such that, for every $x\in K$ there is an $e\in E$
such that $d\left(  x,e\right)  <\omega$. Then $\nu_{d}\left(  K,\omega
\right)  $ is the minimum cardinality of an $\omega$-net in $K$. The (upper)
\emph{box dimension }$\left(  \emph{Minkowski\ dimension}\right)  $\emph{
}of\emph{ }$K$ is defined as
\[
\dim_{\text{\textrm{box}}}\left(  K\right)  =\limsup_{\omega\rightarrow0^{+}%
}\frac{\log\nu_{d}\left(  K,\omega\right)  }{-\log\omega}.
\]
Here is a list of useful results. For an elementary account of these ideas see
\cite{DH}.

\begin{lemma}
\begin{enumerate}
\item If $\left\Vert \cdot\right\Vert $ is any norm on $\mathbb{R}^{k},$
$\omega>0$ and $B$ is the closed unit ball, then $\left(  \frac{1}{\omega
}\right)  ^{k}\leq\nu_{\left\Vert \cdot\right\Vert }\left(  B,\omega\right)
\leq\left(  \frac{3}{\omega}\right)  ^{k}.$

\item If $E$ is a bounded subset of $\mathbb{R}^{k}$ with positive Lebesgue
measure, then $\dim_{\text{\textrm{box}}}\left(  E\right)  =k$.

\item If $\mathcal{U}_{k}$ denotes the group of $k\times k$ unitary complex
matrices, and $\omega>0$, then%
\[
\left(  \frac{1}{\omega}\right)  ^{k^{2}}\leq\nu_{\left\Vert \cdot\right\Vert
}\left(  \mathcal{U}_{k},\omega\right)  \leq\left(  \frac{9\pi e}{\omega
}\right)  ^{k^{2}}.
\]

\end{enumerate}
\end{lemma}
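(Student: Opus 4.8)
The plan is to treat the three parts separately, each by a volume-comparison (packing vs.\ covering) argument. For part (1), I would fix a norm $\left\Vert \cdot\right\Vert$ on $\mathbb{R}^{k}$ and let $B$ be its closed unit ball. For the lower bound: if $E$ is an $\omega$-net of $B$ with $N=\nu_{\left\Vert\cdot\right\Vert}(B,\omega)$ points, then the $\omega$-balls centered at points of $E$ cover $B$, so comparing Lebesgue measures gives $\mathrm{vol}(B)\le N\cdot\omega^{k}\,\mathrm{vol}(B)$, whence $N\ge(1/\omega)^{k}$. For the upper bound: choose a maximal $\omega$-separated set $E$ in $B$; maximality forces $E$ to be an $\omega$-net, and the balls of radius $\omega/2$ about points of $E$ are pairwise disjoint and contained in $(1+\omega/2)B\subseteq(3/2)B$ (assuming $\omega\le1$; the case $\omega>1$ is trivial since one ball suffices), so $N\cdot(\omega/2)^{k}\,\mathrm{vol}(B)\le(3/2)^{k}\,\mathrm{vol}(B)$, giving $N\le(3/\omega)^{k}$.

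Part (2) follows from part (1) applied with the Euclidean norm. Since $E$ has positive Lebesgue measure, it contains (by inner regularity, or simply because it has positive measure) a subset of positive measure sitting inside some ball, and after translating and rescaling we may compare $\nu(E,\omega)$ from below with $\nu$ of a small ball, yielding $\liminf_{\omega\to0^+}\log\nu(E,\omega)/(-\log\omega)\ge k$; the reverse inequality $\le k$ holds because $E$ is bounded, hence contained in a large ball, and part (1) gives $\nu(E,\omega)\le(C/\omega)^{k}$. Combining, $\dim_{\mathrm{box}}(E)=k$.

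Part (3) is the substantive one and the main obstacle. Here $\mathcal{U}_{k}$ sits inside $M_{k}(\mathbb{C})\cong\mathbb{R}^{2k^{2}}$, but as a manifold it has real dimension $k^{2}$, so the naive ambient volume argument gives the wrong exponent; one must work intrinsically. The lower bound can be obtained by exhibiting an $\omega$-separated subset of size at least $(1/\omega)^{k^{2}}$: for instance, using the exponential map one embeds a scaled copy of a ball in the $k^{2}$-dimensional Lie algebra $\mathfrak{u}(k)$ of skew-Hermitian matrices into $\mathcal{U}_{k}$ with controlled metric distortion, then invokes part (1). For the upper bound one covers $\mathcal{U}_{k}$ by translates of $\exp$ of a small ball in $\mathfrak{u}(k)$: since $\exp$ is $1$-Lipschitz from $(\mathfrak{u}(k),\left\Vert\cdot\right\Vert)$ to $(\mathcal{U}_{k},\left\Vert\cdot\right\Vert)$ (because $\left\Vert e^{A}-e^{B}\right\Vert\le\left\Vert A-B\right\Vert$ for skew-Hermitian $A,B$), an $\omega$-net of a ball of radius $\pi$ in $\mathfrak{u}(k)$ — which by part (1) has size at most $(3\pi/\omega)^{k^{2}}$ — maps onto an $\omega$-net of $\mathcal{U}_{k}$, and sharpening the constant (tracking factors of $e$ and $\pi$ from the covering-radius and the distortion of $\exp$ near the cut locus) yields the stated bound $(9\pi e/\omega)^{k^{2}}$. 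I expect the careful bookkeeping of these constants, together with verifying the Lipschitz property of $\exp$ and the surjectivity of $\exp$ onto all of $\mathcal{U}_{k}$, to be the delicate part; everything else is a routine packing/covering comparison.
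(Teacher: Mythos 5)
The paper never proves this lemma; it is quoted with a pointer to \cite{DH} (and \cite{SS} for the unitary-group estimate), so the comparison is with the standard arguments rather than an internal proof. Your parts (1) and (2) are those standard volume-comparison arguments and are essentially correct, with two small repairs: in (2) a bounded set of positive measure need not contain a ball (a fat Cantor set, say), so the lower bound should be run as a direct volume count on $E$ itself (a cover by $N$ balls of radius $\omega$ has total Lebesgue measure at least $m(E)>0$, so $N\geq m(E)/(c_k\omega^{k})$, which already forces the box dimension to be $\geq k$), and since the paper's covering numbers require centers inside the set, the upper bound should cover a large ambient ball at scale $\omega/2$ and move each useful center into $E$, doubling the radius; neither change affects the dimension. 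Your upper bound in (3) is also fine and even sharper than stated: $\exp$ maps the operator-norm ball of radius $\pi$ in $\mathfrak{u}(k)$ onto $\mathcal{U}_{k}$ by the spectral theorem, the identity $e^{A}-e^{B}=\int_{0}^{1}e^{tA}(A-B)e^{(1-t)B}\,dt$ gives the $1$-Lipschitz property you assert, and part (1), rescaled, then yields $\nu(\mathcal{U}_{k},\omega)\leq(3\pi/\omega)^{k^{2}}\leq(9\pi e/\omega)^{k^{2}}$.

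The genuine gap is the lower bound in (3). The mechanism you propose --- embed a \emph{scaled} ball of $\mathfrak{u}(k)$ into $\mathcal{U}_{k}$ via $\exp$ ``with controlled distortion'' and invoke part (1) --- cannot produce the constant $1$. Part (1) applied to a ball of radius $r$ gives only $(r/\omega)^{k^{2}}$, transporting this through $\exp$ costs the lower Lipschitz constant of $\exp$, and if you argue via an $\omega$-separated set you lose a further factor of $2$ in the scale; moreover $\exp$ is injective only on the open ball of radius $\pi$ and admits no globally Lipschitz inverse, because the principal logarithm is discontinuous at unitaries with eigenvalue $-1$ (already for $k=1$, $e^{i(\pi-\varepsilon)}$ and $e^{-i(\pi-\varepsilon)}$ are at distance $2\sin\varepsilon$ while their logarithms are nearly $2\pi$ apart). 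So your argument yields $(c/\omega)^{k^{2}}$ for some unspecified $c<1$, not $(1/\omega)^{k^{2}}$. To get the constant $1$ one works on the group itself: if $\mu$ is normalized Haar measure, bi-invariance of the operator-norm metric gives $\nu(\mathcal{U}_{k},\omega)\geq1/\mu\{U:\left\Vert U-1\right\Vert<\omega\}$, and one must then show $\mu\{U:\left\Vert U-1\right\Vert<\omega\}\leq\omega^{k^{2}}$, which is a genuine computation (e.g.\ via the Weyl integration formula, estimating the Vandermonde factor) of the kind carried out in \cite{SS} and \cite{DH}. Note that for every use the paper makes of this lemma only the exponent $k^{2}$ matters, so your weaker bound would suffice downstream, but it does not prove the statement as written.
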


\subsection{Covering Numbers in $(\mathcal{M}_{k}(\mathbb{C}))^{n}$}

Let $\mathcal{M}_{k}(\mathbb{C})$ be the $k\times k$ full matrix algebra with
entries in $\mathbb{C},$ and $\tau_{k}$ be the normalized trace on
$\mathcal{M}_{k}(\mathbb{C}),$ i.e., $\tau_{k}=\frac{1}{k}Tr,$ where $Tr$ is
the usual trace on $\mathcal{M}_{k}(\mathbb{C}).$ Let $\mathcal{U}_{k}$ denote
the group of all unitary matrices in $\mathcal{M}_{k}(\mathbb{C}).$ Let
$\left(  \mathcal{M}_{k}(\mathbb{C})\right)  ^{n}$ denote the direct sum of
$n$ copies of $\mathcal{M}_{k}(\mathbb{C}).$ Let $\mathcal{M}_{k}%
^{sa}(\mathbb{C})$ be the subalgebra of $\mathcal{M}_{k}(\mathbb{C})$
consisting of all selfadjoint matrices of $\mathcal{M}_{k}(\mathbb{C}).$ Let
$\left(  \mathcal{M}_{k}^{sa}(\mathbb{C})\right)  ^{n}$ be the direct sum (or
orthogonal sum) of $n$ copies of $\mathcal{M}_{k}^{sa}(\mathbb{C}).$ Let
$\left\Vert \cdot\right\Vert $ be an operator norm on $\mathcal{M}_{k}\left(
\mathbb{C}\right)  ^{n}$ defined by
\[
\left\Vert \left(  A_{1},\ldots,A_{n}\right)  \right\Vert =\max\left\{
\left\Vert A_{1}\right\Vert ,\ldots,\left\Vert A_{n}\right\Vert \right\}
\]
for all $\left(  A_{1},\ldots,A_{n}\right)  $ in $\mathcal{M}_{k}\left(
\mathbb{C}\right)  ^{n}.$ Let $\left\Vert \cdot\right\Vert _{Tr}$ denote the
usual trace norm induced by $Tr$ on $\mathcal{M}_{k}\left(  \mathbb{C}\right)
^{n}$, i.e.,%
\[
\left\Vert \left(  A_{1},\ldots,A_{n}\right)  \right\Vert _{Tr}=\sqrt
{Tr(A_{1}^{\ast}A_{1})+\ldots+Tr(A_{n}^{\ast}A_{n})}%
\]
for all $\left(  A_{1},\ldots,A_{n}\right)  $ in $\mathcal{M}_{k}\left(
\mathbb{C}\right)  ^{n}.$ Let $\left\Vert \cdot\right\Vert _{2}$ denote the
trace norm induced by $\tau_{k}$ on $\mathcal{M}_{k}\left(  \mathbb{C}\right)
^{n},$ i.e.,%
\[
\left\Vert \left(  A_{1},\ldots,A_{n}\right)  \right\Vert _{2}=\sqrt{\tau
_{k}(A_{1}^{\ast}A_{1})+\ldots+\tau_{k}(A_{n}^{\ast}A_{n})}%
\]
for all $\left(  A_{1},\ldots,A_{n}\right)  $ in $\mathcal{M}_{k}\left(
\mathbb{C}\right)  ^{n}.$

For every $\omega>0,$ we define the $\omega$-$\left\Vert \cdot\right\Vert
$-ball $Ball(B_{1},\ldots,B_{n};\omega,\left\Vert \cdot\right\Vert )$ centered
at $(B_{1},\ldots,B_{n})$ in $\mathcal{M}_{k}\left(  \mathbb{C}\right)  ^{n}$
to be the subset of $\mathcal{M}_{k}\left(  \mathbb{C}\right)  ^{n}$
consisting of all $(A_{1},\ldots,A_{n})$ in $\mathcal{M}_{k}\left(
\mathbb{C}\right)  ^{n}$ such that
\[
\left\Vert \left(  A_{1},\ldots,A_{n}\right)  -(B_{1},\ldots,B_{n})\right\Vert
<\omega.
\]

\begin{definition}
Suppose that $\sum$ is a subset of $\mathcal{M}_{k}\left(  \mathbb{C}\right)
^{n}.$We define $\nu_{\mathcal{1}}(\sum,$ $\omega)$ to be the minimal number
of $\omega$-$\left\Vert \cdot\right\Vert $-balls that cover $\sum$ in
$\mathcal{M}_{k}\left(  \mathbb{C}\right)  ^{n}.$
\end{definition}

For every $\omega>0,$ we define the $\omega$-$\left\Vert \cdot\right\Vert
_{2}$-ball $Ball$ $(B_{1},\ldots,B_{n};\omega,\left\Vert \cdot\right\Vert
_{2})$ centered at $(B_{1},\ldots,B_{n})$ in $\mathcal{M}_{k}\left(
\mathbb{C}\right)  ^{n}$ to be the subset of $\mathcal{M}_{k}\left(
\mathbb{C}\right)  ^{n}$ consisting of all $(A_{1},\ldots,A_{n})$ in
$\mathcal{M}_{k}\left(  \mathbb{C}\right)  ^{n}$ such that
\[
\left\Vert (A_{1},\ldots,A_{n})-\left(  B_{1},\ldots,B_{n}\right)  \right\Vert
_{2}<\omega.
\]

\begin{definition}
Suppose that $\sum$ is a subset of $\mathcal{M}_{k}\left(  \mathbb{C}\right)
^{n}.$ We define $\nu_{2}(\sum,$ $\omega)$ to be the minimal number of
$\omega$-$\left\Vert \cdot\right\Vert _{2}$-balls that cover $\sum$ in
$\mathcal{M}_{k}\left(  \mathbb{C}\right)  ^{n}.$
\end{definition}

There is a very deep result of S. Szarek \cite{SS} (see \cite{DH} for an
account of this result) concerning these covering numbers.

\begin{proposition}
\cite{SS} \label{Szarek} For each positive integer $n$ there is a $C_{n}>0$
such that, for every $k\in\mathbb{N}$, and every bounded set $E\subseteq
\mathcal{M}_{k}(\mathbb{C})^{n},$ and every $\omega>0$, we have
\[
\left(  \frac{1}{C_{n}}\right)  ^{k^{2}}\leq\frac{\nu_{\infty}\left(
E,\omega\right)  }{\nu_{2}\left(  E,\omega\right)  }\leq C_{n}^{k^{2}}.
\]

\end{proposition}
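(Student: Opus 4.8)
The plan is to compare the two covering numbers through the Lebesgue volumes of the relevant unit balls. Regard $V=\mathcal{M}_k(\mathbb{C})^n$ as a real Hilbert space of real dimension $d=2nk^{2}$ under $\langle\vec A,\vec B\rangle=\operatorname{Re}\sum_{i}Tr(B_i^{\ast}A_i)$, whose norm is $\left\Vert\cdot\right\Vert_{Tr}$, and let $\mathrm{vol}$ be Lebesgue measure with respect to an orthonormal basis. Write $B_{\infty}$, $B_{2}$, $B_{Tr}$ for the closed unit balls of $\left\Vert\cdot\right\Vert$, $\left\Vert\cdot\right\Vert_{2}$, $\left\Vert\cdot\right\Vert_{Tr}$. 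Since $\tau_{k}(A^{\ast}A)=\frac1k\sum_{i}s_{i}(A)^{2}\le\left\Vert A\right\Vert^{2}$ for a single matrix, one has $\left\Vert\vec A\right\Vert_{2}\le\sqrt{n}\,\left\Vert\vec A\right\Vert_{\infty}$ and $\left\Vert A\right\Vert\le\left\Vert A\right\Vert_{Tr}$, hence $B_{Tr}\subseteq B_{\infty}\subseteq\sqrt{n}\,B_{2}$; and $B_{2}=\sqrt{k}\,B_{Tr}$ by the definition of $\tau_{k}$, so that $\mathrm{vol}(B_{2})=k^{nk^{2}}\mathrm{vol}(B_{Tr})=k^{nk^{2}}\pi^{nk^{2}}/(nk^{2})!$.

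The lower bound on the ratio is essentially free: from $B_{\infty}\subseteq\sqrt{n}\,B_{2}$, any $\omega$-$\left\Vert\cdot\right\Vert$-net of $E$ is a $\sqrt{n}\,\omega$-$\left\Vert\cdot\right\Vert_{2}$-net, and a $\left\Vert\cdot\right\Vert$-ball of radius $\omega$ is covered by at most $(3\sqrt{n})^{d}$ $\left\Vert\cdot\right\Vert$-balls of radius $\omega/\sqrt{n}$ by the elementary covering estimate for finite-dimensional normed spaces; this gives $\nu_{2}(E,\omega)\le(3\sqrt{n})^{d}\nu_{\infty}(E,\omega)$, i.e.\ the left-hand inequality with any $C_{n}\ge(3\sqrt{n})^{2n}$.

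For the upper bound I would use the classical volume-comparison bound for covering numbers between two norms. If $\{x_{j}\}$ is an $\omega$-$\left\Vert\cdot\right\Vert_{2}$-net of $E$ of cardinality $\nu_{2}(E,\omega)$, then each $x_{j}+\omega B_{2}$ is covered by at most $N$ translates of $\tfrac{\omega}{2}B_{\infty}$, where $N$ is the number of translates of $\tfrac12 B_{\infty}$ needed to cover $B_{2}$; taking a maximal $\tfrac12 B_{\infty}$-separated subset of $B_{2}$ and observing that the disjoint $\tfrac14 B_{\infty}$-balls around its points lie in $(1+\tfrac{\sqrt{n}}{4})B_{2}$ gives $N\le c_{n}^{\,d}\,\mathrm{vol}(B_{2})/\mathrm{vol}(B_{\infty})$ for a constant $c_{n}$ depending only on $n$. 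Replacing those centers that meet $E$ by nearby points of $E$ and doubling the radius then yields
\[
\nu_{\infty}(E,\omega)\ \le\ c_{n}^{\,d}\,\frac{\mathrm{vol}(B_{2})}{\mathrm{vol}(B_{\infty})}\,\nu_{2}(E,\omega),
\]
so everything reduces to the estimate $\mathrm{vol}(B_{2})/\mathrm{vol}(B_{\infty})\le c_{n}'^{\,k^{2}}$, equivalently a lower bound $\mathrm{vol}(B_{\infty})\ge(\alpha_{n}/k)^{nk^{2}}$.

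This last point is the one genuinely substantive step, and it is the reason the result is "deep": the naive inclusion $B_{Tr}\subseteq B_{\infty}$ only gives $\mathrm{vol}(B_{\infty})\ge\pi^{nk^{2}}/(nk^{2})!$, which is too small by a factor of order $k^{nk^{2}}$, so one must use that a matrix with entries of variance of order $1/k$ typically has operator norm of order $1$, not $\sqrt{k}$. Concretely, let $\gamma$ be the standard Gaussian measure on $(V,\langle\cdot,\cdot\rangle)$; then $\gamma(S)\le(2\pi)^{-d/2}\mathrm{vol}(S)$ for every Borel $S$. Apply this with $S=R_{n}\sqrt{k}\,B_{\infty}=\{\vec A:\left\Vert\vec A\right\Vert\le R_{n}\sqrt{k}\}$: using a $\tfrac14$-net of the unit sphere of $\mathbb{C}^{k}$ (of size at most $9^{2k}$) together with the Gaussian tails of the scalars $\langle A_{i}u,v\rangle$, one gets $\gamma(S)\ge\tfrac12$ once $R_{n}$ is a large enough constant depending on $n$; hence $\mathrm{vol}(B_{\infty})=(R_{n}\sqrt{k})^{-d}\mathrm{vol}(S)\ge\tfrac12(2\pi)^{d/2}(R_{n}\sqrt{k})^{-d}=\tfrac12\bigl(2\pi/(R_{n}^{2}k)\bigr)^{nk^{2}}$. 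Combining this with $\mathrm{vol}(B_{2})=k^{nk^{2}}\pi^{nk^{2}}/(nk^{2})!$ and Stirling's bound $(nk^{2})!\ge(nk^{2}/e)^{nk^{2}}$ gives $\mathrm{vol}(B_{2})/\mathrm{vol}(B_{\infty})\le 2\bigl(eR_{n}^{2}/2n\bigr)^{nk^{2}}\le c_{n}'^{\,k^{2}}$, which is precisely what is needed. The convex-geometry reductions surrounding this estimate are routine; the hard part is the volume lower bound for $B_{\infty}$, and alternatively one could substitute the known closed form for $\mathrm{vol}(B_{\infty})$ in place of the Gaussian argument.
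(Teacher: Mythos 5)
Your proposal cannot be matched against an argument in the paper, because the paper gives none: Proposition \ref{Szarek} is quoted from Szarek \cite{SS} (with \cite{DH} cited for an exposition) and used as a black box. What you wrote is a correct, self-contained derivation along volumetric lines. The two soft steps are fine: the inclusions $B_{Tr}\subseteq B_{\infty}\subseteq\sqrt{n}\,B_{2}$ give the left-hand inequality after refining an $\omega$-$\left\Vert\cdot\right\Vert$-net at scale $\omega/\sqrt{n}$ (your constant $3\sqrt{n}$ becomes, say, $6\sqrt{n}$ if one insists that centers lie in $E$, which is harmless), and the bound $N\le c_{n}^{\,d}\,\mathrm{vol}(B_{2})/\mathrm{vol}(B_{\infty})$ with $d=2nk^{2}$ is the standard packing argument, with the recentering-and-doubling step handled as you indicate. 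You also correctly isolate the one substantive point: the naive inclusion $B_{Tr}\subseteq B_{\infty}$ loses a factor of order $k^{nk^{2}}$, and one needs $\mathrm{vol}(B_{\infty})\ge(\alpha_{n}/k)^{nk^{2}}$, i.e.\ that the operator-norm ball has volume radius of order $1/\sqrt{k}$ relative to the Hilbert--Schmidt structure. Your Gaussian argument for this is sound: under the standard Gaussian each $\langle A_{i}u,v\rangle$ has variance $2$, a $\tfrac14$-net of the unit sphere of $\mathbb{C}^{k}$ has at most $9^{2k}$ points, and the union bound over the $9^{4k}$ pairs, the $n$ coordinates, and all $k\ge1$ gives $\gamma\{\left\Vert\vec A\right\Vert\le R_{n}\sqrt{k}\}\ge\tfrac12$ for a constant $R_{n}$ depending only on $n$; combined with $\gamma(S)\le(2\pi)^{-d/2}\mathrm{vol}(S)$, the exact value $\mathrm{vol}(B_{2})=k^{nk^{2}}\pi^{nk^{2}}/(nk^{2})!$ and Stirling, this yields $\mathrm{vol}(B_{2})/\mathrm{vol}(B_{\infty})\le c_{n}'^{\,k^{2}}$ as claimed. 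The loose ends (open versus closed balls, the factor $2$, exact constants) only perturb $C_{n}$, and every factor you accumulate is of the form $c^{nk^{2}}$ or $c^{2nk^{2}}$, hence of the required shape $C_{n}^{k^{2}}$. So your argument is an acceptable elementary substitute for the citation; what the appeal to Szarek buys the authors instead is a ready-made, sharper metric-entropy machinery (e.g.\ for unitary orbits and homogeneous spaces, used elsewhere in the paper) without redoing the random-matrix norm estimate.
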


\subsection{Unitary Orbits of Balls in $\mathcal{M}_{k}(\mathbb{C})^{n}$.}

For every $\omega>0,$ we define the $\omega$-$orbit$-$\left\Vert
\cdot\right\Vert $-ball $\mathcal{U}(B_{1},\ldots,B_{n};\omega,\left\Vert
\cdot\right\Vert )$ centered at $(B_{1},\ldots,B_{n})$ in $\mathcal{M}%
_{k}(\mathbb{C})^{n}$ to be the subset of $\mathcal{M}_{k}(\mathbb{C})^{n}$
consisting of all $(A_{1},\ldots,A_{n})$ in $\mathcal{M}_{k}(\mathbb{C})^{n}$
such that there exists some unitary matrix $W$ in $\mathcal{U}_{k}$
satisfying
\[
\left\Vert (A_{1},\ldots,A_{n})-(WB_{1}W^{\ast},\ldots,WB_{n}W^{\ast
})\right\Vert <\omega.
\]

\begin{definition}
Suppose that $\sum$ is a subset of $\mathcal{M}_{k}(\mathbb{C})^{n}.$We define
$o_{\mathcal{1}}(\sum,\omega)$ to be the minimal number of $\omega$%
-$orbit$-$\left\Vert \cdot\right\Vert $-balls that cover $\sum$ in
$\mathcal{M}_{k}(\mathbb{C})^{n}.$
\end{definition}

For every $\omega>0,$ we define the $\omega$-$orbit$-$\left\Vert
\cdot\right\Vert _{2}$-ball $\mathcal{U}(B_{1},\ldots B_{n};\omega,\left\Vert
\cdot\right\Vert _{2})$ centered at $(B_{1},\ldots,B_{n})$ in $\mathcal{M}%
_{k}(\mathbb{C})^{n}$ to be the subset of $\mathcal{M}_{k}(\mathbb{C})^{n}$
consisting of all $(A_{1},\ldots,A_{n})$ in $\mathcal{M}_{k}(\mathbb{C})^{n}$
such that there exists some unitary matrix $W$ in $\mathcal{U}_{k}$ satisfying%
\[
\left\Vert (A_{1},\ldots,A_{n})-(WB_{1}W^{\ast},\ldots,WB_{n}W^{\ast
})\right\Vert _{2}<\omega.
\]

\begin{definition}
Suppose that $\sum$ is a subset of $\mathcal{M}_{k}(\mathbb{C})^{n}.$ We
define $o_{2}(\sum,\omega)$ to be the minimal number of $\omega$%
-$orbit$-$\left\Vert \cdot\right\Vert _{2}$-balls that cover $\sum$ in
$\mathcal{M}_{k}(\mathbb{C})^{n}.$
\end{definition}

\subsection{Noncommutative Polynomials.}

Let $\mathbb{C<}X_{1},\ldots,X_{n}>$ be the unital noncommutative polynomials
in the indeterminants $X_{1},\ldots,X_{n}.$ Let $\left\{  P_{r}\right\}
_{r=1}^{\mathcal{1}}$ be the collection of all noncommutative polynomials in
$\mathbb{C<}X_{1},\ldots,X_{n}>$ with rational-complex coefficients, i.e.,
coefficients in $\mathbb{Q}+i\mathbb{Q}$.

\subsection{Moments and Voiculescu's Microstates Space.}

Suppose $\mathcal{M}$ is a von Neumann algebra with a faithful normal tracial
state $\tau$ and $x_{1},\ldots,x_{n}$ are selfadjoint elements of
$\mathcal{M}$ such that $\mathcal{M}=W^{\ast}\left(  x_{1},\ldots
,x_{n}\right)  $. Suppose $m\left(  t_{1},\ldots,t_{n}\right)  $ is a monomial
in free variables $t_{1},\ldots,t_{n}$. The $m^{th}$ moment of $\vec
{x}=\left(  x_{1},\ldots,x_{n}\right)  $ is defined as%
\[
\tau\left(  m\left(  x_{1},\ldots,x_{n}\right)  \right)  .
\]
Suppose $\mathcal{N}$ is a von Neumann algebra with a faithful normal trace
$\rho$ and $\mathcal{N}$ is generated by $y_{1},\ldots,y_{n}$. The following
fact shows how the moments contain all the information about the algebras.

\begin{proposition}
\label{moments}The tuples $\vec{x}$ and $\vec{y}$ have the same moments, i.e.,
for every monomial $m$,%
\[
\tau\left(  m\left(  x_{1},\ldots,x_{n}\right)  \right)  =\rho\left(  m\left(
y_{1},\ldots,y_{n}\right)  \right)  ,
\]
if and only if there is a normal $\ast$-isomorphism $\pi:\mathcal{M}%
\rightarrow\mathcal{N}$ such that

\begin{enumerate}
\item $\pi\left(  x_{j}\right)  =y_{j}$ for $1\leq j\leq n,$ and

\item $\tau=\rho\circ\pi$.
\end{enumerate}
\end{proposition}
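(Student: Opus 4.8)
The reverse implication is immediate, so the plan is to prove the forward one. First I would fix the GNS data of Subsection 2.1: let $\pi_{\tau}:\mathcal{M}\rightarrow B(H_{\tau})$ and $\pi_{\rho}:\mathcal{N}\rightarrow B(H_{\rho})$ be the GNS representations of $(\mathcal{M},\tau)$ and $(\mathcal{N},\rho)$, with cyclic vectors $e_{\tau},e_{\rho}$. Since $\tau$ and $\rho$ are faithful normal traces, $\pi_{\tau}$ and $\pi_{\rho}$ are faithful normal $\ast$-representations whose images are von Neumann algebras, so we may identify $\mathcal{M}$ with $\pi_{\tau}(\mathcal{M})$ and $\mathcal{N}$ with $\pi_{\rho}(\mathcal{N})$. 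Let $\mathcal{A}_{0}\subseteq\mathcal{M}$ and $\mathcal{B}_{0}\subseteq\mathcal{N}$ be the unital $\ast$-subalgebras generated by $x_{1},\ldots,x_{n}$ and by $y_{1},\ldots,y_{n}$ (they are $\ast$-closed because the generators are selfadjoint), and abbreviate $p(\vec{x})=p(x_{1},\ldots,x_{n})$, $p(\vec{y})=p(y_{1},\ldots,y_{n})$ for $p\in\mathbb{C}\langle X_{1},\ldots,X_{n}\rangle$. Because $\mathcal{M}=W^{\ast}(x_{1},\ldots,x_{n})$ and $\mathcal{N}$ is generated by $y_{1},\ldots,y_{n}$, the algebras $\mathcal{A}_{0}$ and $\mathcal{B}_{0}$ are $\sigma$-strongly dense in $\mathcal{M}$ and $\mathcal{N}$, and, since $\sigma$-strong convergence of a net implies pointwise norm convergence on $H$, the sets $\mathcal{A}_{0}e_{\tau}$ and $\mathcal{B}_{0}e_{\rho}$ are dense in $H_{\tau}$ and $H_{\rho}$ respectively.

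The key observation is that for each $p$ the element $p^{\ast}p$ is again a noncommutative polynomial, hence a finite linear combination of monomials, so the moment hypothesis extended by linearity gives
\[
\left\Vert p(\vec{x})e_{\tau}\right\Vert ^{2}=\tau\left(p(\vec{x})^{\ast}p(\vec{x})\right)=\rho\left(p(\vec{y})^{\ast}p(\vec{y})\right)=\left\Vert p(\vec{y})e_{\rho}\right\Vert ^{2}.
\]
Applying this to differences of polynomials shows that the assignment $p(\vec{x})e_{\tau}\mapsto p(\vec{y})e_{\rho}$ is a well-defined linear isometry from the dense subspace $\mathcal{A}_{0}e_{\tau}$ onto the dense subspace $\mathcal{B}_{0}e_{\rho}$, so it extends to a unitary $U:H_{\tau}\rightarrow H_{\rho}$ with $Ue_{\tau}=e_{\rho}$ (take $p=1$). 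Next I would check that $\operatorname{Ad}U$ carries $\mathcal{A}_{0}$ onto $\mathcal{B}_{0}$: fixing $p$ and setting $a=p(\vec{x})$, the operators $UaU^{\ast}$ and $\pi_{\rho}(p(\vec{y}))$ both send $q(\vec{y})e_{\rho}$ to $(pq)(\vec{y})e_{\rho}$, hence agree on the dense set $\mathcal{B}_{0}e_{\rho}$, so $UaU^{\ast}=\pi_{\rho}(p(\vec{y}))\in\mathcal{B}_{0}$; conversely every element of $\mathcal{B}_{0}$ arises this way, and $x_{j}\mapsto y_{j}$.

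Finally, since $\operatorname{Ad}U$ is a $\sigma$-strongly continuous $\ast$-isomorphism of $B(H_{\tau})$ onto $B(H_{\rho})$, it maps the $\sigma$-strong closure of $\mathcal{A}_{0}$, which is $\mathcal{M}$, onto the $\sigma$-strong closure of $\mathcal{B}_{0}$, which is $\mathcal{N}$. Setting $\pi=\left.\operatorname{Ad}U\right|_{\mathcal{M}}$ yields a normal $\ast$-isomorphism $\pi:\mathcal{M}\rightarrow\mathcal{N}$ with $\pi(x_{j})=y_{j}$, and, using $Ue_{\tau}=e_{\rho}$,
\[
\rho(\pi(T))=\left(U\pi_{\tau}(T)U^{\ast}e_{\rho},e_{\rho}\right)=\left(\pi_{\tau}(T)e_{\tau},e_{\tau}\right)=\tau(T)\qquad(T\in\mathcal{M}),
\]
so $\tau=\rho\circ\pi$, completing the forward direction.

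I do not expect a genuine obstacle here; the argument is essentially formal. The one point deserving care is the transition in the last paragraph: the spatially implemented $\ast$-isomorphism $\operatorname{Ad}U$ between the polynomial subalgebras $\mathcal{A}_{0}$ and $\mathcal{B}_{0}$ extends automatically to the von Neumann algebras they generate precisely because $\operatorname{Ad}U$ is a $\sigma$-strong homeomorphism and $\mathcal{M}=W^{\ast}(x_{1},\ldots,x_{n})$, $\mathcal{N}=W^{\ast}(y_{1},\ldots,y_{n})$ are by definition the $\sigma$-strong (equivalently, bicommutant) closures of these subalgebras; faithfulness of $\tau$ and $\rho$ is what guarantees the relevant GNS representations are injective so that these identifications are legitimate. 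Everything else is a routine diagram chase.
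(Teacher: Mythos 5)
Your proof is correct, and it is exactly the standard GNS argument: the paper states Proposition \ref{moments} without proof, treating it as the well-known fact that equal moments yield a trace-preserving spatial isomorphism via the intertwining unitary $p(\vec{x})e_{\tau}\mapsto p(\vec{y})e_{\rho}$. Your write-up supplies precisely that argument, with the relevant care taken at the only delicate points (well-definedness of the isometry from faithfulness/normality, and passage from the polynomial algebras to their $\sigma$-strong closures via $\operatorname{Ad}U$), so nothing further is needed.
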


If $N$ is a positive integer and $\varepsilon>0$, we say that $\left(  \vec
{x},\tau\right)  $ and $\left(  \vec{y},\rho\right)  $ are $\left(
N,\varepsilon\right)  $\emph{-close} if
\[
\left\vert \tau\left(  m\left(  x_{1},\ldots,x_{n}\right)  \right)
-\rho\left(  m\left(  y_{1},\ldots,y_{n}\right)  \right)  \right\vert
<\varepsilon
\]
for all monomials $m$ with degree less than or equal to $N$.

If $N$ and $k$ are positive integers and $\varepsilon>0$, we define%
\[
\Gamma_{R}\left(  x_{1},\ldots,x_{n};k,\varepsilon,N\right)
\]
to be the set of all selfadjoint tuples $\vec{A}=\left(  A_{1},\ldots
,A_{n}\right)  $ of $k\times k$ complex matrices such that $\left\Vert \vec
{A}\right\Vert \leq R$ and $\left(  \vec{x},\tau\right)  $ and $\left(
\vec{A},\tau_{k}\right)  $ are $\left(  N,\varepsilon\right)  $-close.

\subsection{Voiculescu's Free Entropy Dimension.}

If $\mathcal{M}$ is a von Neumann algebra with a faithful normal trace $\tau$
and selfadjoint generators $x_{1},\ldots,x_{n}$, and $R\geq\left\Vert \left(
x_{1},\ldots,x_{n}\right)  \right\Vert $, we define the \emph{free entropy
dimension of }$\vec{x}$ by%
\[
\delta_{0}\left(  \vec{x}\right)  =\limsup_{\omega\rightarrow0^{+}}%
\inf_{N,\varepsilon}\limsup_{k\rightarrow\infty}\frac{\log\nu_{\infty}\left(
\Gamma_{R}\left(  \vec{x};k,\varepsilon,N\right)  \right)  }{-k^{2}\log\omega
},
\]
which turns out to be independent of $R$. It follows from Szarek's result,
Proposition \ref{Szarek}, that the definition of $\delta_{0}$ remains
unchanged if we replace $\nu_{\infty}$ with $\nu_{2}$.

If we are dealing with more than one trace, we use the notation%
\[
\delta_{0}\left(  x_{1},\ldots,x_{n};\tau\right)  .
\]

\subsection{Voiculescu's Norm-microstates Space.}

Suppose $\mathcal{A}$ is a unital C*-algebra generated by selfadjoint elements
$x_{1},\ldots,x_{n}$, and suppose $\left\{  P_{1},P_{2},\ldots\right\}  $ are
the polynomials in $n$ free variables with rational-complex coefficients. We
replace the moments in the von Neumann algebra setting with norms of
polynomials,
\[
\left\Vert P_{j}\left(  x_{1},\ldots,x_{n}\right)  \right\Vert .
\]
It is easy to see that the analogue of Proposition \ref{moments} holds.

\begin{proposition}
Suppose $\mathcal{A}=C^{\ast}\left(  x_{1},\ldots,x_{n}\right)  $ and
$\mathcal{B}=C^{\ast}\left(  y_{1},\ldots,y_{n}\right)  $. Then%
\[
\left\Vert P_{j}\left(  \vec{x}\right)  \right\Vert =\left\Vert P_{j}\left(
\vec{y}\right)  \right\Vert
\]
for $1\leq j<\infty$ if and only if there is a unital $\ast$-isomorphism
$\pi:\mathcal{A}\rightarrow\mathcal{B}$ such that $\pi\left(  x_{k}\right)
=y_{k}$ for $1\leq k\leq n$.
\end{proposition}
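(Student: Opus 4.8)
The plan is to imitate the proof of Proposition \ref{moments}, replacing moments by norms of polynomials and using the fact that a unital $\ast$-isomorphism of C*-algebras is automatically isometric.

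The ``if'' direction is immediate: if such a $\pi$ exists, then $\pi$ is isometric, and since $\pi(x_k)=y_k$ for every $k$ we have $\pi\left(P_j(\vec{x})\right)=P_j(\vec{y})$ for every $j$, whence $\left\Vert P_j(\vec{x})\right\Vert =\left\Vert \pi\left(P_j(\vec{x})\right)\right\Vert =\left\Vert P_j(\vec{y})\right\Vert$. For the converse, I would first observe that each coordinate $X_k$ is one of the $P_j$, so the hypothesis already yields $\left\Vert x_k\right\Vert =\left\Vert y_k\right\Vert$ for $1\le k\le n$; put $R=\max\{1,\left\Vert x_1\right\Vert,\ldots,\left\Vert x_n\right\Vert\}=\max\{1,\left\Vert y_1\right\Vert,\ldots,\left\Vert y_n\right\Vert\}$. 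Next I would upgrade the norm equality from the countable family $\{P_j\}$ of polynomials with rational-complex coefficients to all of $\mathbb{C}\langle X_1,\ldots,X_n\rangle$: given a polynomial $P$ with complex coefficients and $\varepsilon>0$, choose $P_j$ with the same monomials whose coefficients are within $\varepsilon$ of those of $P$; then both $\left\Vert P(\vec{x})-P_j(\vec{x})\right\Vert$ and $\left\Vert P(\vec{y})-P_j(\vec{y})\right\Vert$ are bounded by the same quantity, namely $\varepsilon$ times (number of monomials of $P$) times $R^{\deg P}$. Letting $\varepsilon\to 0$ and using $\left\Vert P_j(\vec{x})\right\Vert =\left\Vert P_j(\vec{y})\right\Vert$ gives $\left\Vert P(\vec{x})\right\Vert =\left\Vert P(\vec{y})\right\Vert$ for every noncommutative polynomial $P$.

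With this established, I would define $\pi_0$ on the dense unital $\ast$-subalgebra $\mathcal{A}_0=\{P(\vec{x}):P\in\mathbb{C}\langle X_1,\ldots,X_n\rangle\}$ of $\mathcal{A}$ by $\pi_0\left(P(\vec{x})\right)=P(\vec{y})$. It is well defined, since $P(\vec{x})=Q(\vec{x})$ forces $\left\Vert (P-Q)(\vec{x})\right\Vert =0$, hence $\left\Vert (P-Q)(\vec{y})\right\Vert =0$ and $P(\vec{y})=Q(\vec{y})$. It is a unital $\ast$-homomorphism of $\mathcal{A}_0$ onto $\mathcal{B}_0=\{Q(\vec{y}):Q\in\mathbb{C}\langle X_1,\ldots,X_n\rangle\}$, because sums, products, adjoints and the unit are carried by the assignment of polynomials to the corresponding operations; and it is isometric by the preceding paragraph, hence a $\ast$-isomorphism of $\mathcal{A}_0$ onto $\mathcal{B}_0$. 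Being isometric, it extends continuously to an isometric $\ast$-isomorphism $\pi:\mathcal{A}=\overline{\mathcal{A}_0}\to\mathcal{B}=\overline{\mathcal{B}_0}$, and clearly $\pi(x_k)=y_k$ for $1\le k\le n$.

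The only step needing any care is the approximation argument passing from rational-complex to arbitrary complex coefficients, which is what makes $\pi_0$ simultaneously well defined and isometric; the uniform bound $R$ on the generators reduces it to an elementary estimate, so this is not a serious obstacle.
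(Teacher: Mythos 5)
Your proof is correct, and it is essentially the argument the paper intends: the authors state this proposition without proof, as the "easy" analogue of Proposition \ref{moments}, and your route (deduce the norm identity for all complex-coefficient polynomials from the rational-complex ones via the uniform bound on the generators, then check that $P(\vec{x})\mapsto P(\vec{y})$ is a well-defined, isometric, unital $\ast$-homomorphism on the dense polynomial $\ast$-subalgebra and extend by continuity) is exactly the standard argument being left to the reader. The one step that genuinely needs the care you gave it is the passage from $\mathbb{Q}+i\mathbb{Q}$ coefficients to arbitrary coefficients, since well-definedness of $\pi_{0}$ uses the norm identity for $P-Q$, which need not have rational-complex coefficients; you handled this correctly.
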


We say that $\vec{x}$ and $\vec{y}$ are \emph{topologically }$\left(
N,\varepsilon\right)  $\emph{-close} if\emph{ }%
\[
\left\vert \left\Vert P_{j}\left(  \vec{x}\right)  \right\Vert -\left\Vert
P_{j}\left(  \vec{y}\right)  \right\Vert \right\vert <\varepsilon
\]
for $1\leq j\leq N$.

\bigskip

For all integers $r,k\geq1,$ and $\varepsilon>0,$ and noncommutative
polynomials $P_{1},\ldots,P_{r},$ we define
\[
\Gamma^{\text{top}}(x_{1},\ldots,x_{n};k,\varepsilon,P_{1},\ldots,P_{r})
\]
to be the subset of $\left(  \mathcal{M}_{k}^{sa}(\mathbb{C)}\right)  ^{n}$
consisting of all the
\[
\left(  A_{1},\ldots,A_{n}\right)  \in\left(  \mathcal{M}_{k}^{sa}%
(\mathbb{C)}\right)  ^{n}%
\]
that are topologically $\left(  r,\varepsilon\right)  $-close to $\vec{x},$
i.e., satisfying%
\[
\left\vert \left\Vert P_{j}(A_{1},\ldots,A_{n})\right\Vert -\left\Vert
P_{j}(x_{1},\ldots,x_{n})\right\Vert \right\vert <\varepsilon,\mathcal{8}1\leq
j\leq r.
\]

\subsection{Voiculescu's Topological Free Entropy Dimension.}

Define
\[
\nu_{\mathcal{1}}(\Gamma^{\text{top}}(x_{1},\ldots,x_{n};k,\varepsilon
,P_{1},\ldots,P_{r}),\omega)
\]
to be the covering number of the set $\Gamma^{\text{top}}(x_{1},\ldots
,x_{n};k,\varepsilon,P_{1},\ldots,P_{r})$ by $\omega$-$\left\Vert
\cdot\right\Vert $-balls in the metric space $(\mathcal{M}_{k}^{sa}%
(\mathbb{C))}^{n}$ equipped with operator norm.

\begin{definition}
\textbf{The topological free entropy dimension }of $x_{1},\ldots,x_{n}$ is
defined by
\[
\delta_{top}(x_{1},\ldots,x_{n})=
\]%
\[
\underset{\omega\rightarrow0^{+}}{\lim\sup}\underset{\varepsilon
>0,r\in\mathbb{N}}{\inf}\underset{k\rightarrow\mathcal{1}}{\lim\sup}\frac
{\log(\nu_{\mathcal{1}}(\Gamma^{\text{top}}(x_{1},\ldots,x_{n};k,\varepsilon
,P_{1},\ldots,P_{r}),\omega))}{-k^{2}\log\omega}.
\]

\end{definition}

For each positive integer $N,$ define $\mathbb{P}_{N}\left(  t_{1}%
,\ldots,t_{n}\right)  $ to be the set of all $p$ in $\mathbb{C<}X_{1}%
,\ldots,X_{n}>$ of degree at most $N$ and whose coefficients have modulus at
most $N$. If $k\in\mathbb{N},$ and $\varepsilon>0$, we define
\[
\Gamma^{\text{top}}\left(  x_{1},\ldots,x_{n};N,\varepsilon,k\right)
\]
to be the set of all $\left(  A_{1},\ldots,A_{n}\right)  \in\mathcal{M}%
_{k}^{n}\left(  \mathbb{C}\right)  $ such that $\left\vert \left\Vert p\left(
\vec{x}\right)  \right\Vert -\left\Vert p\left(  \vec{A}\right)  \right\Vert
\right\vert <\varepsilon$ for every $\ast$-polynomial $p\in\mathbb{P}%
_{N}\left(  t_{1},\ldots,t_{n}\right)  $. It was shown in \cite{HS2} that%
\[
\delta_{top}(x_{1},\ldots,x_{n})=
\]%
\[
\underset{\omega\rightarrow0^{+}}{\lim\sup}\underset{\varepsilon
>0,r\in\mathbb{N}}{\inf}\underset{k\rightarrow\mathcal{1}}{\lim\sup}\frac
{\log(\nu_{\mathcal{1}}(\Gamma^{\text{top}}(x_{1},\ldots,x_{n};N,\varepsilon
,k),\omega))}{-k^{2}\log\omega}.
\]

It follows from a result of S. Szarek \cite{SS} (see \cite{DH} for an
exposition) that the above definitions of $\delta_{top}$ remains unchanged if
we replace $\nu_{\infty}$ with $\nu_{2}$.

\subsection{MF-algebras}

We note that the definition of $\delta_{top}(x_{1},\ldots,x_{n})$ makes sense
if and only if, for every $\varepsilon>0$ and every $r,k_{0}\in\mathbb{N}$,
there is a $k\geq k_{0}$ such that $\Gamma^{\text{top}}(x_{1},\ldots
,x_{n};k,\varepsilon,P_{1},\ldots,P_{r})\neq\varnothing$. In \cite{HLS} we
proved that this is equivalent to C*$\left(  x_{1},\ldots,x_{n}\right)  $
being an $MF$ C*-algebra in the sense of Blackadar and Kirchberg \cite{BK}. A
C*-algebra $\mathcal{A}$ is an \emph{MF-algebra} if $\mathcal{A}$ can be
embedded into $%
{\displaystyle\prod_{1\leq k<\infty}}
\mathcal{M}_{m_{k}}\left(  \mathbb{C}\right)  /%
{\displaystyle\sum_{1\leq k<\infty}}
\mathcal{M}_{m_{k}}\left(  \mathbb{C}\right)  $ for some increasing sequence
$\left\{  m_{k}\right\}  $ of positive integers. In particular C$^{\ast
}\left(  x_{1},\ldots,x_{n}\right)  $ is an $MF$-algebra if there is a
sequence $\left\{  m_{k}\right\}  $ of positive integers and sequences
$\left\{  A_{1k}\right\}  ,\ldots,\left\{  A_{nk}\right\}  $ with
$A_{1k},\ldots,A_{nk}\in\mathcal{M}_{m_{k}}\left(  \mathbb{C}\right)  $ such
that
\[
\lim_{k\rightarrow\infty}\left\Vert p\left(  A_{1k},\ldots,A_{nk}\right)
\right\Vert =\left\Vert p\left(  x_{1},\ldots,x_{n}\right)  \right\Vert
\]
for every $\ast$-polynomial $p\left(  t_{1},\ldots,t_{n}\right)  $.

When the above holds for every $\ast$-polynomial $p$, we say that the sequence
$\left\{  \vec{A}_{k}=\left(  A_{1k},\ldots,A_{nk}\right)  \right\}  $
converges to $\vec{x}=\left(  x_{1},\ldots,x_{n}\right)  $ in
\emph{topological distribution}, and write%
\[
\vec{A}_{k}\overset{t.d.}{\longrightarrow}\vec{x}.
\]

\subsection{ Noncommutative Continuous Functions}

The algebra of \emph{noncommutative continuous functions} of $n$ variables was
introduced and studied in \cite{HKM}. Basically, it is the metric completion
of the algebra of $\ast$-polynomials with respect to a family of seminorms.
There is a functional calculus for these functions on any $n$-tuple of
elements in any unital C*-algebra. Here is a list of the basic properties of
these functions \cite{HKM}:

\begin{enumerate}
\item For each such function $\varphi$ there is a sequence $\left\{
p_{n}\right\}  $ of noncommutative $\ast$-polynomials such that for every
tuple $\left(  T_{1},\ldots,T_{n}\right)  $ we have
\[
\left\Vert p_{n}\left(  T_{1},\ldots,T_{n}\right)  -\varphi\left(
T_{1},\ldots,T_{n}\right)  \right\Vert \rightarrow0,
\]
and the convergence is uniform on bounded $n$-tuples.

\item For any tuple $\left(  T_{1},\ldots,T_{n}\right)  ,$ $C^{\ast}\left(
T_{1},\ldots,T_{n}\right)  $ is the set of all $\varphi\left(  T_{1}%
,\ldots,T_{n}\right)  $ with $\varphi$ a noncommutative continuous function.

\item For any $n$-tuple $\left(  A_{1},\ldots,A_{n}\right)  $ and any $S\in
C^{\ast}\left(  A_{1},\ldots,A_{n}\right)  ,$ there is a noncommutative
continuous function $\varphi$ such that $S=\varphi\left(  A_{1},\ldots
,A_{n}\right)  $ and $\left\Vert \varphi\left(  T_{1},\ldots,T_{n}\right)
\right\Vert \leq\left\Vert S\right\Vert $ for all $n$-tuples $\left(
T_{1},\ldots,T_{n}\right)  .$

\item If $T_{1},\ldots,T_{n}$ are elements of a unital C*-algebra
$\mathcal{A}$ and $\pi:\mathcal{A}\rightarrow\mathcal{B}$ is a unital $\ast
$-homomorphism, then
\[
\pi\left(  \varphi\left(  T_{1},\ldots,T_{n}\right)  \right)  =\varphi\left(
\pi\left(  T_{1}\right)  ,\ldots,\pi\left(  T_{n}\right)  \right)
\]
for every noncommutative continuous function $\varphi$.
\end{enumerate}

It is clear that if $\vec{A}_{k}\overset{t.d.}{\longrightarrow}\vec{x}$, then
$\lim_{k\rightarrow\infty}\left\Vert \varphi\left(  A_{1k},\ldots
,A_{nk}\right)  \right\Vert =\left\Vert \varphi\left(  x_{1},\ldots
,x_{n}\right)  \right\Vert $ for every noncommutative continuous function
$\varphi$.

\subsection{Change of Variables}

The following change of variable theorem was proved in \cite{HS2}.

\begin{theorem}
\label{CV}Suppose $x_{1},\ldots,x_{n},y_{1},\ldots,y_{m}$ are elements of a
unital C*-algebra $\mathcal{A}$ and there are noncommutative continuous
functions $\varphi_{1},\ldots,\varphi_{m}$ in $n$ variables. Suppose also that

\begin{enumerate}
\item $y_{j}=\varphi_{j}\left(  x_{1},\ldots,x_{n}\right)  $ for $1\leq j\leq
m,$

\item $\left\Vert \left(  \varphi_{1}\left(  \vec{a}\right)  ,\ldots
,\varphi_{m}\left(  \vec{a}\right)  \right)  -(\varphi_{1}(\vec{b}%
),\ldots,\varphi_{m}(\vec{b}))\right\Vert \leq M\left\Vert \vec{a}-\vec
{b}\right\Vert $ for some fixed $M>0$ and all operator $n$-tuples $\vec
{a},\vec{b}$ with norm less than $1+\left\Vert \left(  x_{1},\ldots
,x_{n}\right)  \right\Vert .$

\item $x_{1},\ldots,x_{n}\in C^{\ast}\left(  y_{1},\ldots,y_{m}\right)  .$
\end{enumerate}

Then
\[
\delta_{\text{top}}\left(  x_{1},\ldots,x_{n}\right)  \geq\delta_{\text{top}%
}\left(  y_{1},\ldots,y_{m}\right)  .
\]

\end{theorem}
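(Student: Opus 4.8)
The plan is to reduce the lower-bound inequality $\delta_{\text{top}}(\vec{x})\geq\delta_{\text{top}}(\vec{y})$ to a covering-number comparison between the norm-microstate spaces $\Gamma^{\text{top}}(\vec{x};k,\varepsilon,P_1,\dots,P_r)$ and $\Gamma^{\text{top}}(\vec{y};k,\varepsilon',P_1',\dots,P_{r'}')$. The key object is the map $\Phi_k\colon(\mathcal{M}_k^{sa})^n\to(\mathcal{M}_k)^m$ defined by $\Phi_k(\vec{A})=(\varphi_1(\vec{A}),\dots,\varphi_m(\vec{A}))$. First I would observe that since each $\varphi_j$ is a noncommutative continuous function, property (1) in the list above lets us approximate $\varphi_j$ uniformly on $R$-bounded $n$-tuples by $\ast$-polynomials, where $R=1+\|\vec{x}\|$; together with hypothesis (2), the map $\Phi_k$ is $M$-Lipschitz from the operator-norm ball of radius $R$ in $(\mathcal{M}_k^{sa})^n$ to $(\mathcal{M}_k)^m$, uniformly in $k$. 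Hence $\Phi_k$ sends an $\omega/M$-net of $\Gamma^{\text{top}}(\vec{x};k,\dots)$ to an $\omega$-net of its image, which gives
\[
\nu_\infty\bigl(\Phi_k(\Gamma^{\text{top}}(\vec{x};k,\dots)),\omega\bigr)\leq\nu_\infty\bigl(\Gamma^{\text{top}}(\vec{x};k,\dots),\omega/M\bigr).
\]
Since $-\log(\omega/M)=-\log\omega+\log M$ and the denominator $-k^2\log\omega$ dominates, the factor $M$ washes out in the limsup defining $\delta_{\text{top}}$.

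Next I would show that, after choosing the polynomials and tolerances appropriately, $\Phi_k$ maps $\Gamma^{\text{top}}(\vec{x};k,\varepsilon,P_1,\dots,P_r)$ into $\Gamma^{\text{top}}(\vec{y};k,\varepsilon',Q_1,\dots,Q_{r'})$ for any prescribed $\varepsilon',Q_1,\dots,Q_{r'}$, provided $k$ is large, $\varepsilon$ small, and $r$ large. Indeed, for a $\ast$-polynomial $Q_\ell$ in $m$ variables, $Q_\ell(\varphi_1(\vec{A}),\dots,\varphi_m(\vec{A}))$ is, up to an error controllable by the polynomial approximations in property (1), a $\ast$-polynomial $P$ in $\vec{A}$ of bounded degree and coefficient size; if $\vec{A}$ is topologically $(r,\varepsilon)$-close to $\vec{x}$ with $r$ large enough to include $P$ and $\varepsilon$ small enough, then $\bigl|\,\|Q_\ell(\Phi_k(\vec{A}))\|-\|Q_\ell(\vec{y})\|\,\bigr|<\varepsilon'$ because $y_\ell=\varphi_\ell(\vec{x})$ by hypothesis (1) of the theorem. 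Thus $\Phi_k(\Gamma^{\text{top}}(\vec{x};\dots))\subseteq\Gamma^{\text{top}}(\vec{y};\dots)$, so
\[
\nu_\infty\bigl(\Gamma^{\text{top}}(\vec{y};k,\varepsilon',Q_1,\dots,Q_{r'}),\omega\bigr)\leq\nu_\infty\bigl(\Gamma^{\text{top}}(\vec{x};k,\varepsilon,P_1,\dots,P_r),\omega/M\bigr).
\]

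Finally I would assemble these estimates through the nested $\limsup_\omega\inf_{\varepsilon,r}\limsup_k$ structure of the definition: taking logs, dividing by $-k^2\log\omega$, passing to the $\limsup$ in $k$, then the $\inf$ over tolerances for $\vec{y}$ (which only shrinks the left side), and finally the $\limsup$ in $\omega$, yields $\delta_{\text{top}}(\vec{y})\leq\delta_{\text{top}}(\vec{x})$. Hypothesis (3), that $x_1,\dots,x_n\in C^*(y_1,\dots,y_m)$, is needed to guarantee that $C^*(\vec{x})$ is $MF$ iff $C^*(\vec{y})$ is, so that both microstate spaces are eventually nonempty and both $\delta_{\text{top}}$'s are defined; it may also be used to control the norm $R$ and ensure the Lipschitz ball is large enough. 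The main obstacle is the second step: carefully tracking how the choice of $r,\varepsilon$ for $\vec{x}$ must depend on the target data $r',\varepsilon'$ for $\vec{y}$ and on the polynomial-approximation rate of the $\varphi_j$'s, so that the containment of microstate spaces genuinely holds for all large $k$ — this is where the uniformity of the noncommutative-continuous-function calculus (property (1), convergence uniform on bounded tuples) does the real work.
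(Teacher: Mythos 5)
There is a genuine gap at the central covering estimate, and it concerns the direction of your inequality. What you actually establish is (a) $\Phi_k$ is $M$-Lipschitz on the relevant ball, and (b) for suitable parameters $\Phi_k\bigl(\Gamma^{\text{top}}(\vec{x};k,\varepsilon,P_1,\ldots,P_r)\bigr)\subseteq\Gamma^{\text{top}}(\vec{y};k,\varepsilon',Q_1,\ldots,Q_{r'})$. From (a) and (b) the only covering inequality that follows is
\[
\nu_{\infty}\bigl(\Phi_k(\Gamma^{\text{top}}(\vec{x};k,\varepsilon,P_1,\ldots,P_r)),\omega\bigr)\leq\nu_{\infty}\bigl(\Gamma^{\text{top}}(\vec{x};k,\varepsilon,P_1,\ldots,P_r),\omega/M\bigr),
\]
a bound on the covering number of the \emph{image} of $\Phi_k$. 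Your displayed inequality $\nu_{\infty}(\Gamma^{\text{top}}(\vec{y};\ldots),\omega)\leq\nu_{\infty}(\Gamma^{\text{top}}(\vec{x};\ldots),\omega/M)$ does not follow from the inclusion in (b): a subset has \emph{smaller} covering numbers, so the containment points the wrong way for bounding the covering numbers of all of $\Gamma^{\text{top}}(\vec{y})$. What is missing is approximate surjectivity of $\Phi_k$ onto $\Gamma^{\text{top}}(\vec{y})$, and this is exactly where hypothesis (3) must do real work; in your write-up it is used only to ensure both entropy dimensions are defined, which is not enough (a map $\Phi_k$ that collapses a large microstate space onto a small one would violate your inequality, and only hypothesis (3) rules this out).

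The repair: since $x_i\in C^{\ast}(y_1,\ldots,y_m)$, there are noncommutative continuous functions $\psi_1,\ldots,\psi_n$ with $x_i=\psi_i(\vec{y})$. Given $(\varepsilon,P_1,\ldots,P_r)$ for $\vec{x}$ and $\omega>0$, choose $(\varepsilon',Q_1,\ldots,Q_{r'})$ so fine that every $\vec{B}\in\Gamma^{\text{top}}(\vec{y};k,\varepsilon',Q_1,\ldots,Q_{r'})$ satisfies both $\Psi_k(\vec{B}):=(\psi_1(\vec{B}),\ldots,\psi_n(\vec{B}))\in\Gamma^{\text{top}}(\vec{x};k,\varepsilon,P_1,\ldots,P_r)$ and $\max_j\Vert\varphi_j(\Psi_k(\vec{B}))-B_j\Vert<\omega$; the second condition holds because the noncommutative continuous functions $\eta_j(\vec{t})=\varphi_j(\psi_1(\vec{t}),\ldots,\psi_n(\vec{t}))-t_j$ vanish at $\vec{y}$, and their norms at $\vec{B}$ are controlled, uniformly on bounded tuples, by finitely many polynomial norms which are part of the $(\varepsilon',Q)$-closeness. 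Then if $S$ is a minimal $\omega$-net of $\Gamma^{\text{top}}(\vec{x};k,\varepsilon,P_1,\ldots,P_r)$, the set $\Phi_k(S)$ (whose points lie in $\Gamma^{\text{top}}(\vec{y};\ldots)$ by your step (b)) is an $(M+1)\omega$-net for $\Gamma^{\text{top}}(\vec{y};k,\varepsilon',Q_1,\ldots,Q_{r'})$, so $\nu_{\infty}(\Gamma^{\text{top}}(\vec{y};\ldots),(M+1)\omega)\leq\nu_{\infty}(\Gamma^{\text{top}}(\vec{x};\ldots),\omega)$; the constant $(M+1)$ washes out exactly as you observed for $M$, and note that the quantifier order also reverses relative to your step two (for each $(\varepsilon,P)$ on the $x$-side one chooses $(\varepsilon',Q)$ on the $y$-side), which is the order the $\inf$--$\limsup$ assembly requires. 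A minor additional point: $\Phi_k$ and $\Psi_k$ must send selfadjoint tuples to selfadjoint tuples (replace each $\varphi_j$, $\psi_i$ by its real part, which changes nothing at $\vec{x}$, $\vec{y}$), since $\Gamma^{\text{top}}$ lives in $(\mathcal{M}_k^{sa}(\mathbb{C}))^n$. With this surjectivity ingredient inserted, your Lipschitz step and the final limit bookkeeping go through. (For reference, the paper does not reprove this theorem; it quotes it from the earlier paper \cite{HS2}, whose argument is of the two-sided kind sketched here.)
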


\subsection{ $\delta_{\text{top}}^{1/2}$}

In this section we discuss D. Voiculescu's notion of semi-microstates
\newline$\Gamma_{1/2}^{\text{top}}\left(  x_{1},\ldots,x_{n};p_{1}%
,\ldots,p_{r},\varepsilon,k\right)  $ and the corresponding invariant
$\delta_{\text{top}}^{1/2}\left(  x_{1},\ldots,x_{n}\right)  $. It turns out
that the domain of definition of $\delta_{\text{top}}^{1/2}\left(
x_{1},\ldots,x_{n}\right)  $ is much larger than that of $\delta_{\text{top}%
}\left(  x_{1},\ldots,x_{n}\right)  ,$ but that when $C^{\ast}\left(
x_{1},\ldots,x_{n}\right)  $ is an MF-algebra, they are equal.

\begin{definition}
$\Gamma_{1/2}^{\text{top}}(x_{1},\ldots,x_{n};k,\varepsilon,P_{1},\ldots
,P_{r})$ is the set of all $\left(  a_{1},\ldots,a_{n}\right)  \in
\mathcal{M}_{k}^{n}\left(  \mathbb{C}\right)  $ such that%
\[
\left\Vert P_{j}\left(  \vec{a}\right)  \right\Vert \leq\left\Vert
P_{j}\left(  \vec{x}\right)  \right\Vert +\varepsilon
\]
for $1\leq j\leq r$. Similarly, we define $\Gamma_{1/2}^{\text{top}}\left(
x_{1},\ldots,x_{n};N,\varepsilon,k\right)  $ for a positive integer $N$.

We define $\delta_{\text{top}}^{1/2}\left(  x_{1},\ldots,x_{n}\right)  $ to
be
\[
\limsup\limits_{\omega\rightarrow0^{+}}\inf\limits_{r\in\mathbb{N}%
,\varepsilon>0}\limsup\limits_{k\rightarrow\infty}\frac{\log\left(
\nu_{\mathcal{1}}\left(  \Gamma_{1/2}^{\text{top}}(x_{1},\ldots,x_{n}%
;k,\varepsilon,P_{1},\ldots,P_{r}),\omega\right)  \right)  }{-k^{2}\log\omega
}.
\]

\end{definition}

The following result was pointed out by Voiculescu \cite{DV5}.

\begin{theorem}
$\delta_{\text{top}}^{1/2}\left(  x_{1},\ldots,x_{n}\right)  =\delta
_{\text{top}}\left(  x_{1},\ldots,x_{n}\right)  $ whenever $\delta
_{\text{top}}\left(  x_{1},\ldots,x_{n}\right)  $ is defined.
\end{theorem}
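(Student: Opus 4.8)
The plan is to prove the two inequalities $\delta_{\text{top}}^{1/2}\left(  \vec{x}\right)  \geq\delta_{\text{top}}\left(  \vec{x}\right)  $ and $\delta_{\text{top}}^{1/2}\left(  \vec{x}\right)  \leq\delta_{\text{top}}\left(  \vec{x}\right)  $ separately, the first being essentially immediate and the second requiring real work. For the first inequality, observe that for every $k,\varepsilon,r$ (or every $k,\varepsilon,N$) we have the set inclusion
\[
\Gamma^{\text{top}}\left(  x_{1},\ldots,x_{n};k,\varepsilon,P_{1},\ldots,P_{r}\right)  \subseteq\Gamma_{1/2}^{\text{top}}\left(  x_{1},\ldots,x_{n};k,\varepsilon,P_{1},\ldots,P_{r}\right)  ,
\]
since the two-sided estimate $\left\vert \left\Vert P_{j}(\vec{A})\right\Vert -\left\Vert P_{j}(\vec{x})\right\Vert \right\vert <\varepsilon$ implies the one-sided estimate $\left\Vert P_{j}(\vec{A})\right\Vert \leq\left\Vert P_{j}(\vec{x})\right\Vert +\varepsilon$. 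Covering numbers are monotone under inclusion, so $\nu_{\infty}(\Gamma_{1/2}^{\text{top}},\omega)\geq\nu_{\infty}(\Gamma^{\text{top}},\omega)$, and taking logarithms, dividing by $-k^{2}\log\omega$ (note $-\log\omega>0$ for small $\omega$), and then applying $\limsup_{k}$, $\inf_{r,\varepsilon}$, and $\limsup_{\omega}$ in the order dictated by the definitions preserves the inequality. This gives $\delta_{\text{top}}^{1/2}\left(  \vec{x}\right)  \geq\delta_{\text{top}}\left(  \vec{x}\right)  $.

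For the reverse inequality, the strategy is to show that any semi-microstate can be perturbed, at a cost that is uniformly small in the relevant sense, into an honest microstate, so that an $\omega$-net of the microstate space, together with a controlled correction, yields an $\omega'$-net of the semi-microstate space with $\omega'$ only slightly larger than $\omega$. The key input is that $C^{\ast}\left(  x_{1},\ldots,x_{n}\right)  $ is an MF-algebra, which (by the discussion in the MF-algebras subsection) is exactly the hypothesis that makes $\delta_{\text{top}}$ defined. Fix a sequence $\vec{A}_{k}\overset{t.d.}{\longrightarrow}\vec{x}$ with $\vec{A}_{k}\in\mathcal{M}_{m_{k}}^{n}\left(  \mathbb{C}\right)  $. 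The idea is that given a semi-microstate $\vec{a}\in\Gamma_{1/2}^{\text{top}}(\vec{x};k,\varepsilon,P_{1},\ldots,P_{r})$ of size $k$, one forms, for a suitable block size, the direct-sum tuple $\vec{a}\oplus\vec{A}_{m}$; because $\left\Vert P_{j}(\vec{a}\oplus\vec{A}_{m})\right\Vert =\max\left\{  \left\Vert P_{j}(\vec{a})\right\Vert ,\left\Vert P_{j}(\vec{A}_{m})\right\Vert \right\}  $, and the second term is close to $\left\Vert P_{j}(\vec{x})\right\Vert $ while the first is at most $\left\Vert P_{j}(\vec{x})\right\Vert +\varepsilon$, the maximum sits within $\varepsilon$ of $\left\Vert P_{j}(\vec{x})\right\Vert $ from above, and the presence of the $\vec{A}_{m}$ summand forces it to also be within $\varepsilon$ from below; hence the enlarged tuple is a genuine microstate. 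This is the standard ``padding with a faithful microstate'' trick, and it shows $\Gamma_{1/2}^{\text{top}}$ is nonempty exactly when $\Gamma^{\text{top}}$ is, and more importantly relates their covering numbers.

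Concretely, I would argue as follows. Suppose the padded tuples live in $\mathcal{M}_{k+\ell_{k}}(\mathbb{C})^{n}$ where $\ell_{k}$ is the size of the padding block; choosing the sequence of padding sizes so that $\ell_{k}/k\to 0$, one has $(k+\ell_{k})^{2}/k^{2}\to 1$, so the normalization factor $-(k+\ell_{k})^{2}\log\omega$ is asymptotically $-k^{2}\log\omega$ and does not affect the $\limsup$. An $\omega$-net (in operator norm) for $\Gamma^{\text{top}}(\vec{x};k+\ell_{k},\varepsilon,P_{1},\ldots,P_{r})$ pulls back, via the padding map $\vec{a}\mapsto\vec{a}\oplus\vec{A}_{m}$ which is an isometry for $\left\Vert \cdot\right\Vert _{\infty}$, to an $\omega$-net for the image of $\Gamma_{1/2}^{\text{top}}$; since the padding map is injective and norm-preserving, this gives
\[
\nu_{\infty}\left(  \Gamma_{1/2}^{\text{top}}(\vec{x};k,\varepsilon,P_{1},\ldots,P_{r}),\omega\right)  \leq\nu_{\infty}\left(  \Gamma^{\text{top}}(\vec{x};k+\ell_{k},\varepsilon',P_{1},\ldots,P_{r}),\omega\right)
\]
for a slightly relaxed $\varepsilon'$, up to the issue that not every point of the microstate space of the larger size arises as a padding. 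Taking logs, dividing by $-k^{2}\log\omega$, using $(k+\ell_{k})^{2}/k^{2}\to 1$, and passing to the limits in the prescribed order yields $\delta_{\text{top}}^{1/2}\left(  \vec{x}\right)  \leq\delta_{\text{top}}\left(  \vec{x}\right)  $. Combined with the first part, the two are equal whenever $\delta_{\text{top}}\left(  \vec{x}\right)  $ is defined (equivalently, whenever $C^{\ast}(\vec{x})$ is MF). I expect the main obstacle to be the bookkeeping in this last step: making precise the comparison of covering numbers across different matrix sizes — in particular verifying that the padding map genuinely sends semi-microstates into microstates of the larger size and that balls in the larger space restrict correctly to the image — and checking that the change in the normalization constant from $k^{2}$ to $(k+\ell_{k})^{2}$ is harmless, which is where the choice $\ell_{k}=o(k)$ is used.
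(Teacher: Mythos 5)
Your two-inequality argument is sound, and in fact the paper offers no proof of this statement at all: it is stated with a citation to Voiculescu \cite{DV5}, so there is nothing internal to compare against. Your easy direction (monotonicity of covering numbers under the inclusion $\Gamma^{\text{top}}\subseteq\Gamma_{1/2}^{\text{top}}$) is fine, and your padding argument for the hard direction is the standard, and correct, way to see the other inequality: since $P_j(\vec{a}\oplus\vec{B})=P_j(\vec{a})\oplus P_j(\vec{B})$ and the norm of a direct sum is the maximum, a semi-microstate padded with one fixed good block becomes a genuine microstate with a relaxed $\varepsilon$. Two small points you left as ``bookkeeping'' deserve to be made explicit. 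First, the padding block can be taken of a \emph{fixed} size $\ell=\ell(r,\varepsilon)$ (its existence is exactly the nonemptiness of the two-sided microstate spaces, i.e.\ the hypothesis that $\delta_{\text{top}}$ is defined), so $(k+\ell)^2/k^2\rightarrow1$ is automatic and no sequence $\ell_{k}=o(k)$ needs to be engineered. Second, the transfer of a net from size $k+\ell$ back to size $k$ is handled by compressing each center $\vec{c}$ to its upper-left $k\times k$ corner: since $\left\Vert P\vec{c}P-\vec{a}\right\Vert \leq\left\Vert \vec{c}-\vec{a}\oplus\vec{B}\right\Vert$, the compressed centers give an $\omega$-cover of $\Gamma_{1/2}^{\text{top}}$, and if one insists (as the paper's definition does) that centers lie in the set being covered, one replaces each used center by a nearby point of $\Gamma_{1/2}^{\text{top}}$ at the cost of doubling $\omega$, which is harmless after $\omega\rightarrow0^{+}$; the factors $\log\omega/\log(2\omega)\rightarrow1$ and the passage from $\varepsilon$ to $2\varepsilon$ disappear in the infimum. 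With these two remarks your outline is a complete proof, and it is essentially the argument behind Voiculescu's original observation.
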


The following result from \cite{HS3} simplifies some the lower bound estimates
in \cite{HS2}.

\begin{corollary}
\label{pig}%
\[
\delta_{\text{top}}^{1/2}\left(  x_{1},\ldots,x_{n}\right)  \geq\sup\left\{
\delta_{\text{top}}^{1/2}\left(  \pi\left(  x_{1}\right)  ,\ldots,\pi\left(
x_{n}\right)  \right)  :\pi\in\text{Rep}\left(  C^{\ast}\left(  x_{1}%
,\ldots,x_{n}\right)  \right)  \right\}  .
\]

\end{corollary}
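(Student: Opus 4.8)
The plan is to prove the pointwise estimate
$\delta_{\text{top}}^{1/2}(\pi(x_{1}),\ldots,\pi(x_{n}))\leq\delta_{\text{top}}^{1/2}(x_{1},\ldots,x_{n})$
for each individual $\pi\in\text{Rep}(C^{\ast}(x_{1},\ldots,x_{n}))$ and then take the supremum over $\pi$. The only input needed is that a unital $\ast$-homomorphism between C*-algebras is norm-decreasing: since $\pi(P_{j}(x_{1},\ldots,x_{n}))=P_{j}(\pi(x_{1}),\ldots,\pi(x_{n}))$ for every $\ast$-polynomial $P_{j}$, we get
\[
\left\Vert P_{j}(\pi(x_{1}),\ldots,\pi(x_{n}))\right\Vert \leq\left\Vert P_{j}(x_{1},\ldots,x_{n})\right\Vert .
\]
Each $x_{j}$ is selfadjoint, hence so is $\pi(x_{j})$, so the invariant on the left is meaningful.

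First I would record the corresponding containment of semi-microstate spaces. Fix $k,r\in\mathbb{N}$, $\varepsilon>0$, and $\ast$-polynomials $P_{1},\ldots,P_{r}$. If $(a_{1},\ldots,a_{n})\in\Gamma_{1/2}^{\text{top}}(\pi(x_{1}),\ldots,\pi(x_{n});k,\varepsilon,P_{1},\ldots,P_{r})$ then, for $1\leq j\leq r$,
\[
\left\Vert P_{j}(a_{1},\ldots,a_{n})\right\Vert \leq\left\Vert P_{j}(\pi(x_{1}),\ldots,\pi(x_{n}))\right\Vert +\varepsilon\leq\left\Vert P_{j}(x_{1},\ldots,x_{n})\right\Vert +\varepsilon,
\]
so $(a_{1},\ldots,a_{n})\in\Gamma_{1/2}^{\text{top}}(x_{1},\ldots,x_{n};k,\varepsilon,P_{1},\ldots,P_{r})$. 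Hence
\[
\Gamma_{1/2}^{\text{top}}(\pi(x_{1}),\ldots,\pi(x_{n});k,\varepsilon,P_{1},\ldots,P_{r})\subseteq\Gamma_{1/2}^{\text{top}}(x_{1},\ldots,x_{n};k,\varepsilon,P_{1},\ldots,P_{r}).
\]

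Next I would invoke monotonicity of covering numbers under inclusion to get
$\nu_{\infty}(\Gamma_{1/2}^{\text{top}}(\pi(\vec{x});k,\varepsilon,P_{1},\ldots,P_{r}),\omega)\leq\nu_{\infty}(\Gamma_{1/2}^{\text{top}}(\vec{x};k,\varepsilon,P_{1},\ldots,P_{r}),\omega)$
for all $\omega>0$; any re-centering needed to keep ball centers inside the smaller set costs at most a factor of $2$ in $\omega$, which disappears in the quotient $\log\nu_{\infty}/(-k^{2}\log\omega)$ as $\omega\to0^{+}$. Taking logarithms, dividing by $-k^{2}\log\omega>0$ (for small $\omega$), and then applying in turn $\limsup_{k\to\infty}$, $\inf_{r\in\mathbb{N},\varepsilon>0}$, and $\limsup_{\omega\to0^{+}}$ --- each a monotone operation on the extended reals --- yields $\delta_{\text{top}}^{1/2}(\pi(x_{1}),\ldots,\pi(x_{n}))\leq\delta_{\text{top}}^{1/2}(x_{1},\ldots,x_{n})$. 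Taking the supremum over $\pi$ finishes the proof.

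There is no real obstacle here; the statement is essentially a monotonicity-of-the-formula observation. The only points requiring a little care are the order in which the four nested $\limsup$/$\inf$ operations are applied (the containment above holds for each fixed $k,\varepsilon,P_{1},\ldots,P_{r}$, which is exactly what is needed to push the inequality through), and the cosmetic centers-in-the-set issue for covering numbers. It is worth remarking that this argument genuinely uses the semi-microstate formulation: the one-sided condition $\left\Vert P_{j}(\vec{a})\right\Vert \leq\left\Vert P_{j}(\pi(\vec{x}))\right\Vert +\varepsilon$ transfers to $\vec{x}$ precisely because $\left\Vert P_{j}(\pi(\vec{x}))\right\Vert \leq\left\Vert P_{j}(\vec{x})\right\Vert$, whereas for the two-sided space $\Gamma^{\text{top}}$ the lower bound $\left\Vert P_{j}(\vec{a})\right\Vert >\left\Vert P_{j}(\pi(\vec{x}))\right\Vert -\varepsilon$ carries no information about $\left\Vert P_{j}(\vec{x})\right\Vert$, so the direct inclusion of microstate spaces fails.
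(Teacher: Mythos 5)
Your argument is correct: the norm-decreasing property of the unital $\ast$-homomorphism $\pi$ gives the containment $\Gamma_{1/2}^{\text{top}}(\pi(\vec{x});k,\varepsilon,P_{1},\ldots,P_{r})\subseteq\Gamma_{1/2}^{\text{top}}(\vec{x};k,\varepsilon,P_{1},\ldots,P_{r})$, and the monotonicity of covering numbers (with the harmless re-centering factor you note) pushes through the nested $\limsup$/$\inf$ operations to give $\delta_{\text{top}}^{1/2}(\pi(\vec{x}))\leq\delta_{\text{top}}^{1/2}(\vec{x})$ for each $\pi$. The paper itself quotes this corollary from \cite{HS3} without proof, and your containment-plus-monotonicity argument is exactly the expected one; your closing remark correctly identifies why the one-sided semi-microstate formulation is what makes the inclusion work, in contrast to the two-sided $\Gamma^{\text{top}}$.
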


\section{MF-traces\bigskip}

\subsection{Basic Properties\bigskip}

\begin{definition}
Suppose $\mathcal{A}=C^{\ast}\left(  x_{1},\ldots,x_{n}\right)  $ is an MF
C*-algebra. A tracial state $\tau$ on $\mathcal{A}$ is an $MF$\emph{-trace} if
there is a sequence $\left\{  m_{k}\right\}  $ of positive integers and
sequences $\left\{  A_{1k}\right\}  ,\ldots,\left\{  A_{nk}\right\}  $ with
$A_{1k},\ldots,A_{nk}\in\mathcal{M}_{m_{k}}\left(  \mathbb{C}\right)  $ such
that, for every $\ast$-polynomial $p$,

\begin{enumerate}
\item $\lim_{k\rightarrow\infty}\left\Vert p\left(  A_{1k},\ldots
,A_{nk}\right)  \right\Vert =\left\Vert p\left(  x_{1},\ldots,x_{n}\right)
\right\Vert $, and

\item $\lim_{k\rightarrow\infty}\tau_{m_{k}}\left(  p\left(  A_{1k}%
,\ldots,A_{nk}\right)  \right)  =$ $\tau\left(  p\left(  x_{1},\ldots
,x_{n}\right)  \right)  $.
\end{enumerate}

Recall that if $\left(  1\right)  $ above holds for every $\ast$-polynomial
$p$, we say that the sequence $\left\{  \vec{A}_{k}=\left(  A_{1k}%
,\ldots,A_{nk}\right)  \right\}  $ converges to $\vec{x}=\left(  x_{1}%
,\ldots,x_{n}\right)  $ in \emph{topological distribution}, and write%
\[
\vec{A}_{k}\overset{t.d.}{\longrightarrow}\vec{x},
\]
and when $\left(  2\right)  $ above holds, we say that $\left\{  \left(
\vec{A}_{k},\tau_{m_{k}}\right)  \right\}  $ converges to $\left(  \vec
{x},\tau\right)  $ \emph{in distribution}, and write%
\[
\left(  \vec{A}_{k},\tau_{m_{k}}\right)  \overset{\mathrm{dist}}%
{\longrightarrow}\left(  \vec{x},\tau\right)  .
\]
We let $\mathcal{TS}\left(  \mathcal{A}\right)  $ denote the set of all
tracial states on $\mathcal{A}$ and $\mathcal{T}_{MF}\left(  \mathcal{A}%
\right)  $ denote the set of all MF-traces on $\mathcal{A}$.
\end{definition}

\begin{remark}
It is easily seen that if $\vec{A}_{k}\overset{t.d.}{\longrightarrow}\vec{x}$,
then, for every noncommutative continuous function $\varphi\left(
t_{1},\ldots,t_{n}\right)  ,$ we have
\[
\lim_{k\rightarrow\infty}\left\Vert \varphi\left(  A_{1k},\ldots
,A_{nk}\right)  \right\Vert =\left\Vert \varphi\left(  x_{1},\ldots
,x_{n}\right)  \right\Vert .
\]
Indeed, if $\varepsilon>0$, then, by \cite{HKM}, there is a polynomial $p$
such that
\[
\left\Vert p\left(  \vec{A}\right)  -\varphi\left(  \vec{A}\right)
\right\Vert <\varepsilon/3
\]
for every $\vec{A}$ with $\left\Vert \vec{A}\right\Vert \leq\sup
_{k\in\mathbb{N}}\left\Vert \vec{A}_{k}\right\Vert $. It follows that%
\[
\left\vert \left\Vert \varphi\left(  \vec{A}_{k}\right)  \right\Vert
-\left\Vert \varphi\left(  \vec{x}\right)  \right\Vert \right\vert \leq
\]%
\[
\left\vert \left\Vert \varphi\left(  \vec{A}_{k}\right)  \right\Vert
-\left\Vert p\left(  \vec{A}_{k}\right)  \right\Vert \right\vert +\left\vert
\left\Vert p\left(  \vec{A}_{k}\right)  \right\Vert -\left\Vert p\left(
\vec{x}\right)  \right\Vert \right\vert +\left\vert \left\Vert p\left(
\vec{x}\right)  \right\Vert -\left\Vert \varphi\left(  \vec{x}\right)
\right\Vert \right\vert <
\]%
\[
2\varepsilon/3+\left\vert \left\Vert p\left(  \vec{A}_{k}\right)  \right\Vert
-\left\Vert p\left(  \vec{x}\right)  \right\Vert \right\vert ,
\]
which is clearly less than $\varepsilon$ when $k$ is sufficiently large.
\end{remark}

\bigskip

The following lemma is obvious.\bigskip

\begin{lemma}
\label{trace} Suppose $\mathcal{A}=C^{\ast}\left(  x_{1},\ldots,x_{n}\right)
$ is a unital $MF$-algebra and $\tau$ is a tracial state on $\mathcal{A}$.
Then $\tau\in\mathcal{T}_{MF}\left(  \mathcal{A}\right)  $ if and only if, for
every $\varepsilon>0$ and every finite set $\mathcal{F}$ of $\ast
$-polynomials, there is a positive integer $k$ and $A_{1},\ldots,A_{n}%
\in\mathcal{M}_{k}\left(  \mathbb{C}\right)  $ such that, for every
$p\in\mathcal{F}$,

\begin{enumerate}
\item $\left\vert \left\Vert p\left(  A_{1},\ldots,A_{n}\right)  \right\Vert
-\left\Vert p\left(  x_{1},\ldots,x_{n}\right)  \right\Vert \right\vert
<\varepsilon$, and

\item $\left\vert \tau_{k}\left(  p\left(  A_{1},\ldots,A_{n}\right)  \right)
-\tau\left(  p\left(  x_{1},\ldots,x_{n}\right)  \right)  \right\vert
<\varepsilon$.
\end{enumerate}
\end{lemma}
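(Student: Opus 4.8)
The plan is to prove the two implications separately: the forward one is just an unravelling of the definition, and the reverse one is a standard diagonal-sequence argument. For ($\Rightarrow$), suppose $\tau\in\mathcal{T}_{MF}\left(\mathcal{A}\right)$, so there are positive integers $m_{k}$ and tuples $\vec{A}_{k}\in\mathcal{M}_{m_{k}}\left(\mathbb{C}\right)^{n}$ with $\vec{A}_{k}\overset{t.d.}{\longrightarrow}\vec{x}$ and $\left(\vec{A}_{k},\tau_{m_{k}}\right)\overset{\mathrm{dist}}{\longrightarrow}\left(\vec{x},\tau\right)$. Given $\varepsilon>0$ and a finite set $\mathcal{F}$ of $\ast$-polynomials, each of the finitely many conditions $\left\vert\left\Vert p\left(\vec{A}_{k}\right)\right\Vert-\left\Vert p\left(\vec{x}\right)\right\Vert\right\vert<\varepsilon$ and $\left\vert\tau_{m_{k}}\left(p\left(\vec{A}_{k}\right)\right)-\tau\left(p\left(\vec{x}\right)\right)\right\vert<\varepsilon$ (for $p\in\mathcal{F}$) holds for all large $k$; pick one such $k$, and let $A_{1},\ldots,A_{n}$ be the coordinates of $\vec{A}_{k}$.

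For ($\Leftarrow$), let $\left\{P_{r}\right\}_{r=1}^{\infty}$ be the $\ast$-polynomials with rational-complex coefficients. For each $m\in\mathbb{N}$, apply the hypothesis with $\mathcal{F}=\left\{P_{1},\ldots,P_{m}\right\}$ and $\varepsilon=1/m$ to get a positive integer $k_{m}$ and $\vec{B}_{m}=\left(B_{1,m},\ldots,B_{n,m}\right)\in\mathcal{M}_{k_{m}}\left(\mathbb{C}\right)^{n}$ with $\left\vert\left\Vert P_{r}\left(\vec{B}_{m}\right)\right\Vert-\left\Vert P_{r}\left(\vec{x}\right)\right\Vert\right\vert<1/m$ and $\left\vert\tau_{k_{m}}\left(P_{r}\left(\vec{B}_{m}\right)\right)-\tau\left(P_{r}\left(\vec{x}\right)\right)\right\vert<1/m$ for $1\leq r\leq m$. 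Since each $X_{i}$ occurs among the $P_{r}$, we get $\left\Vert B_{i,m}\right\Vert\to\left\Vert x_{i}\right\Vert$, so $\rho:=\max\left\{\sup_{m}\left\Vert\vec{B}_{m}\right\Vert,\left\Vert\vec{x}\right\Vert\right\}<\infty$. Now fix an arbitrary $\ast$-polynomial $q$ (finitely many monomials, each of degree at most some $N$) and $\eta>0$. Choosing the coefficients of a suitable $P_{r}$ close enough to those of $q$, we have $\left\Vert q\left(\vec{A}\right)-P_{r}\left(\vec{A}\right)\right\Vert<\eta$ and $\left\vert\tau_{k}\left(q\left(\vec{A}\right)\right)-\tau_{k}\left(P_{r}\left(\vec{A}\right)\right)\right\vert<\eta$ for every $k$ and every $\vec{A}\in\mathcal{M}_{k}\left(\mathbb{C}\right)^{n}$ with $\left\Vert\vec{A}\right\Vert\leq\rho+1$ (the coefficient-to-norm map is Lipschitz on this set with constant depending only on $N$, the number of monomials, and $\rho$, and $\left\vert\tau_{k}\left(M\right)\right\vert\leq\left\Vert M\right\Vert$), and the same estimates hold at $\vec{x}$. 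Hence for all large $m$ (so that $1/m<\eta$ and $r\leq m$) we get $\left\vert\left\Vert q\left(\vec{B}_{m}\right)\right\Vert-\left\Vert q\left(\vec{x}\right)\right\Vert\right\vert<3\eta$ and $\left\vert\tau_{k_{m}}\left(q\left(\vec{B}_{m}\right)\right)-\tau\left(q\left(\vec{x}\right)\right)\right\vert<3\eta$. Letting $\eta\to0$ gives $\vec{B}_{m}\overset{t.d.}{\longrightarrow}\vec{x}$ and $\left(\vec{B}_{m},\tau_{k_{m}}\right)\overset{\mathrm{dist}}{\longrightarrow}\left(\vec{x},\tau\right)$, so $\tau\in\mathcal{T}_{MF}\left(\mathcal{A}\right)$.

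The only point needing a moment's thought is this last step, the passage from the countable dense family $\left\{P_{r}\right\}$ to all $\ast$-polynomials, which is why the uniform bound $\rho$ on the microstates $\vec{B}_{m}$ is recorded first; everything else is bookkeeping, which is why the lemma is labelled obvious. If one prefers the dimensions $k_{m}$ to be strictly increasing, replace $\vec{B}_{m}$ by the direct sum of $j$ copies of itself with $j$ large: this scales $k_{m}$ by $j$, leaves every operator norm $\left\Vert P\left(\vec{B}_{m}\right)\right\Vert$ unchanged, and leaves $\tau_{k_{m}}\left(P\left(\vec{B}_{m}\right)\right)$ unchanged for every $\ast$-polynomial $P$.
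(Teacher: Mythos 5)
Your proof is correct: the paper gives no argument for this lemma (it is simply declared obvious), and your write-up is exactly the routine diagonal-plus-density argument that declaration presupposes, with the two points that actually require care --- the uniform bound $\rho$ on $\left\Vert \vec{B}_{m}\right\Vert$ obtained because the coordinate polynomials $X_{i}$ occur among the $P_{r}$, and the passage from rational-complex coefficients to arbitrary $\ast$-polynomials using that $\tau_{k}$ and $\tau$ are states --- handled correctly. The closing remark, that replacing $\vec{B}_{m}$ by a direct sum of copies of itself makes the dimensions as large as desired without changing any $\left\Vert P\left(  \vec{B}_{m}\right)  \right\Vert$ or $\tau_{k_{m}}\left(  P\left(  \vec{B}_{m}\right)  \right)  $, is also accurate and matches the device the paper itself uses elsewhere (e.g.\ in the proof of Proposition \ref{mft}).
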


\bigskip

We say a tracial state $\tau$ on a unital C*-algebra $\mathcal{A}$ is
\emph{finite-dimensional} if there is a finite-dimensional C*-algebra
$\mathcal{B}$ with a tracial state $\rho$ and a unital $\ast$-homomorphism
$\pi:\mathcal{A}\rightarrow\mathcal{B}$ such that $\tau=\rho\circ\pi$. Then
there are positive integers $s_{1},\ldots,s_{w}$, nonnegative numbers
$t_{1},\ldots,t_{w}$ with $%
{\displaystyle\sum_{j=1}^{w}}
t_{j}=1$, and unital $\ast$-homomorphisms $\pi_{j}:\mathcal{A}\rightarrow
\mathcal{M}_{s_{j}}\left(  \mathbb{C}\right)  ,$ for $1\leq j\leq w$, such
that, for every $a\in\mathcal{A}$, we have%
\[
\tau\left(  a\right)  =%
{\displaystyle\sum_{j=1}^{w}}
t_{j}\tau_{s_{j}}\left(  \pi_{j}\left(  a\right)  \right)  .
\]
\bigskip

\bigskip

\begin{proposition}
\label{mft}Suppose $\mathcal{A}=C^{\ast}\left(  x_{1},\ldots,x_{n}\right)  $
is an $MF$-algebra. Then

\begin{enumerate}
\item $\mathcal{T}_{MF}\left(  \mathcal{A}\right)  $ is a nonempty
weak*-compact convex set.

\item Every finite-dimensional tracial state on $\mathcal{A}$ is in
$\mathcal{T}_{MF}\left(  \mathcal{A}\right)  $.

\item If $\pi$ is a unital $\ast$-homomorphism on $\mathcal{A}$ and
$\pi\left(  \mathcal{A}\right)  $ is an $MF$-algebra, then
\[
\left\{  \varphi\circ\pi:\varphi\in\mathcal{T}_{MF}\left(  \pi\left(
\mathcal{A}\right)  \right)  \right\}  \subseteq\mathcal{T}_{MF}\left(
\mathcal{A}\right)  .
\]

\item A tracial state $\psi$ on $\mathcal{A}$ is in $\mathcal{T}_{MF}\left(
\mathcal{A}\right)  $ if and only if there is a free ultrafilter $\alpha$ on
$\mathbb{N}$, and a unital $\ast$-homomorphism $\pi:\mathcal{A}\rightarrow%
{\displaystyle\prod\limits^{\alpha}}
\mathcal{M}_{k}\left(  \mathbb{C}\right)  $ such that $\psi=\tau_{\alpha}%
\circ\pi,$ where
\[
\tau_{\alpha}\left(  \left\{  A_{k}\right\}  _{\alpha}\right)  =\lim
_{k\rightarrow\alpha}\tau_{k}\left(  A_{k}\right)  .
\]

\item If $\mathcal{B}$ is a unital C*-subalgebra of $\mathcal{A}$ and
$\varphi\in\mathcal{T}_{MF}\left(  \mathcal{A}\right)  ,$ then $\varphi
|_{\mathcal{B}}\in\mathcal{T}_{MF}\left(  \mathcal{B}\right)  $.

\item If $\mathcal{B}=C^{\ast}\left(  y_{1},\ldots,y_{m}\right)  $ is MF,
$\nu$ is a C*-tensor norm such that $\mathcal{A}\otimes_{\nu}\mathcal{B}$ is
$MF,$ and one of $\mathcal{A}$ and $\mathcal{B}$ is exact, $\alpha
\in\mathcal{T}_{MF}\left(  \mathcal{A}\right)  ,$ $\beta\in\mathcal{T}%
_{MF}\left(  \mathcal{B}\right)  $, then $\alpha\otimes\beta\in\mathcal{T}%
_{MF}\left(  \mathcal{A}\otimes_{\nu}\mathcal{B}\right)  $.
\end{enumerate}
\end{proposition}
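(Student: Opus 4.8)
The plan is to establish Proposition \ref{mft} by proving the seven items roughly in the order (4), (1), (2), (3), (5), (6), (7), since item (4) gives a clean ultrapower characterization of MF-traces that makes most of the remaining assertions almost formal. First I would prove (4). The ``if'' direction is immediate: given $\pi:\mathcal{A}\to\prod^{\alpha}\mathcal{M}_{k}(\mathbb{C})$ with $\psi=\tau_{\alpha}\circ\pi$, choose representing sequences $\{A_{jk}\}_{k}$ for $\pi(x_j)$; since $\pi$ is a $\ast$-homomorphism, for each $\ast$-polynomial $p$ we have $\|p(\vec{x})\|=\|\pi(p(\vec{x}))\|=\|\{p(\vec A_k)\}_{\alpha}\|=\lim_{k\to\alpha}\|p(\vec A_k)\|$ (using that $\mathcal{A}=C^\ast(\vec x)$ so $\pi$ need only be isometric on the generated C*-algebra, or simply that the norm in the ultraproduct is the $\alpha$-limit of the norms) and $\psi(p(\vec x))=\tau_{\alpha}(\{p(\vec A_k)\}_{\alpha})=\lim_{k\to\alpha}\tau_k(p(\vec A_k))$. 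Passing from the $\alpha$-limit to an honest convergent subsequence along which both (1) and (2) of the definition hold gives $\psi\in\mathcal{T}_{MF}(\mathcal{A})$. Conversely, given $\psi\in\mathcal{T}_{MF}(\mathcal{A})$ with witnessing matrices $\vec A_k\in\mathcal{M}_{m_k}(\mathbb{C})^n$, pick any free ultrafilter $\alpha$ on $\mathbb{N}$ and define $\pi_0:\mathbb{C}\langle X_1,\dots,X_n\rangle\to\prod^{\alpha}\mathcal{M}_{m_k}(\mathbb{C})$ by $X_j\mapsto\{A_{jk}\}_{\alpha}$; condition (1) forces $\pi_0$ to descend to an isometric (hence injective) $\ast$-homomorphism on $C^\ast(\vec x)=\mathcal{A}$, and condition (2) together with weak*-density of polynomial values gives $\tau_{\alpha}\circ\pi=\psi$ on all of $\mathcal{A}$ by continuity.

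With (4) in hand, item (1) is largely formal. Nonemptiness follows from MF-ness: take any sequence $\vec A_k\overset{t.d.}{\longrightarrow}\vec x$, form the ultrapower homomorphism as above, and $\tau_\alpha\circ\pi$ is an MF-trace. Convexity: if $\psi_0=\tau_{\alpha}\circ\pi_0$ and $\psi_1=\tau_{\alpha}\circ\pi_1$ (using the \emph{same} ultrafilter $\alpha$, which is legitimate since both are MF-traces by (4)) and $0\le t\le 1$, then consider the ``block-diagonal'' construction: on index $k$ use matrices of size $m_k^{(0)}+m_k^{(1)}$ built as direct sums, where the relative sizes are chosen so that the normalized trace is approximately $t\tau_{m_k^{(0)}}(\cdot)+(1-t)\tau_{m_k^{(1)}}(\cdot)$ on the diagonal blocks; since $\|\mathrm{diag}(B,C)\|=\max(\|B\|,\|C\|)=\max(\|p(\vec A_k^{(0)})\|,\|p(\vec A_k^{(1)})\|)\to\max(\|p(\vec x)\|,\|p(\vec x)\|)=\|p(\vec x)\|$, condition (1) is preserved, and condition (2) converges to $t\psi_0(p(\vec x))+(1-t)\psi_1(p(\vec x))$. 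Rational $t$ is handled exactly; general $t\in[0,1]$ follows by a limiting argument using weak*-closedness, which I prove next. Weak*-compactness: $\mathcal{T}_{MF}(\mathcal{A})\subseteq\mathcal{TS}(\mathcal{A})$, and the latter is weak*-compact, so it suffices to show $\mathcal{T}_{MF}(\mathcal{A})$ is weak*-closed. If $\psi$ is a weak*-limit of MF-traces, then for every $\varepsilon>0$ and every finite set $\mathcal{F}$ of $\ast$-polynomials there is an MF-trace $\psi'$ within $\varepsilon/2$ of $\psi$ on $\mathcal{F}$, and then by Lemma \ref{trace} applied to $\psi'$ there are matrices $\vec A$ satisfying the $(\varepsilon/2,\mathcal{F})$-conditions for $\psi'$, hence the $(\varepsilon,\mathcal{F})$-conditions for $\psi$; another application of Lemma \ref{trace} shows $\psi\in\mathcal{T}_{MF}(\mathcal{A})$.

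For item (2), a finite-dimensional trace has the form $\tau(a)=\sum_{j=1}^w t_j\tau_{s_j}(\pi_j(a))$ with $\pi_j:\mathcal{A}\to\mathcal{M}_{s_j}(\mathbb{C})$. Fix a sequence $\vec B_k\overset{t.d.}{\longrightarrow}\vec x$ witnessing MF-ness, with $\vec B_k\in\mathcal{M}_{m_k}(\mathbb{C})^n$. The idea is to build, at stage $k$, the block-diagonal tuple consisting of: one large block $\vec B_k$ of normalized weight tending to $0$ (of size, say, $N_k m_k$ with $N_k\to\infty$ chosen so its total weight is $o(1)$ relative to the other blocks — or rather, we want this block present to guarantee the \emph{norm} condition (1), and we want its weight to be whatever is left over after the finite-dimensional blocks) together with $w$ blocks $\pi_j(x_1),\dots,\pi_j(x_n)$ each repeated enough times that block $j$ carries normalized weight approximately $t_j$. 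Because the $\pi_j(x_i)$ are genuine matrices, $\|p\|$ on the finite-dimensional part equals $\max_j\|p(\pi_j(\vec x))\|=\max_j\|\pi_j(p(\vec x))\|\le\|p(\vec x)\|$, so the overall operator norm is dominated by, and (thanks to the $\vec B_k$ block) converges to, $\|p(\vec x)\|$; and the normalized trace converges to $\sum_j t_j\tau_{s_j}(\pi_j(p(\vec x)))=\tau(p(\vec x))$. (If $\mathcal{A}$ has a finite-dimensional representation $\pi_1$ with $\|\pi_1(p(\vec x))\|=\|p(\vec x)\|$ for all $p$, i.e. $\mathcal{A}$ itself is finite-dimensional, the $\vec B_k$ block is unnecessary; in general it is needed and this is the one place one must be slightly careful that the leftover weight on the $\vec B_k$ block can be taken $\to 0$ while keeping sizes integral.) Item (3) is then immediate from (4): if $\varphi=\tau_\alpha\circ\rho$ is an MF-trace on $\pi(\mathcal{A})$, then $\varphi\circ\pi=\tau_\alpha\circ(\rho\circ\pi)$ is of the required form, since $\rho\circ\pi:\mathcal{A}\to\prod^\alpha\mathcal{M}_k(\mathbb{C})$ is a unital $\ast$-homomorphism. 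Item (5) is likewise immediate from (4): restrict the homomorphism $\pi:\mathcal{A}\to\prod^\alpha\mathcal{M}_k(\mathbb{C})$ to $\mathcal{B}$. For item (6), the hypothesis that $\mathcal{A}\otimes_\nu\mathcal{B}$ is MF and one factor is exact is exactly what lets us glue ultrapower embeddings: choosing a common ultrafilter $\alpha$, we have $\pi_{\mathcal A}:\mathcal A\to\prod^\alpha\mathcal M_{p_k}(\mathbb C)$ with $\tau_\alpha\circ\pi_{\mathcal A}=\alpha$ and similarly $\pi_{\mathcal B}$; exactness of one factor guarantees the minimal tensor product $\mathcal{M}_{p_k}\otimes\mathcal{M}_{q_k}=\mathcal{M}_{p_kq_k}$ receives $\mathcal{A}\otimes_\nu\mathcal{B}$ compatibly, and $\tau_{p_kq_k}=\tau_{p_k}\otimes\tau_{q_k}$ gives $\tau_\alpha\circ(\pi_{\mathcal A}\otimes\pi_{\mathcal B})=\alpha\otimes\beta$; one must check the tensored map into $\prod^\alpha\mathcal M_{p_kq_k}(\mathbb C)$ is well-defined and $\ast$-homomorphic, which is where the exactness hypothesis does its work (controlling the tensor norm so that the ultraproduct of the matrix algebras absorbs $\mathcal A\otimes_\nu\mathcal B$).

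The main obstacle is item (2) (and the general-$t$ case of convexity in item (1)): the block-diagonal ``amplification'' arguments require producing, for each $k$, matrices of integer sizes whose normalized traces approximate a prescribed convex combination while the max of the block norms still converges to $\|p(\vec x)\|$. This is routine but must be done with enough care that one can always absorb a vanishing-weight copy of a $t.d.$-convergent sequence to pin down the norm condition; the weak*-closedness half of (1), proved via Lemma \ref{trace}, then cleans up the passage from rational to real coefficients. Items (3), (4), (5) are essentially soft consequences of the ultrapower picture, and (6) is the only other place with genuine content, namely the interaction between the tensor norm $\nu$ and the minimal tensor norm on matrix algebras, which is precisely controlled by the exactness assumption.
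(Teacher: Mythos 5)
There is a genuine gap, and it sits at the foundation of your whole plan: your proof of the ``if'' direction of (4) assumes that the given unital $\ast$-homomorphism $\pi:\mathcal{A}\rightarrow\prod^{\alpha}\mathcal{M}_{k}\left(\mathbb{C}\right)$ is isometric, writing $\left\Vert p\left(\vec{x}\right)\right\Vert =\left\Vert \pi\left(p\left(\vec{x}\right)\right)\right\Vert$. Statement (4) does not assume $\pi$ injective, and a non-injective $\ast$-homomorphism strictly decreases some norms; your parenthetical justification (``$\mathcal{A}=C^{\ast}\left(\vec{x}\right)$ so $\pi$ need only be isometric on the generated C*-algebra'', or that ultraproduct norms are $\alpha$-limits of norms) does not address this: the $\alpha$-limit of $\left\Vert p\left(\vec{A}_{k}\right)\right\Vert$ is $\left\Vert \pi\left(p\left(\vec{x}\right)\right)\right\Vert$, which can be strictly smaller than $\left\Vert p\left(\vec{x}\right)\right\Vert$, so condition (1) in the definition of an MF-trace fails for the representing sequence you chose. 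This is not a removable technicality, because you then deduce (3) from this ``if'' direction by composing $\rho\circ\pi$, and that composite is exactly the situation where injectivity fails (e.g. $\pi$ a proper quotient map); the same issue reappears in your sketch of (6), where the tensored matrix models see the minimal tensor norm rather than the $\nu$-norm, so the norm condition for an MF-trace on $\mathcal{A}\otimes_{\nu}\mathcal{B}$ is again not verified by the models you build.

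The paper runs the logic in the opposite order precisely to supply the missing idea: it proves (3) directly by a padding argument --- take matrix models $\vec{B}_{k}$ witnessing $\varphi\in\mathcal{T}_{MF}\left(\pi\left(\mathcal{A}\right)\right)$, take any $\vec{A}_{k}\overset{t.d.}{\longrightarrow}\vec{x}$ (available since $\mathcal{A}$ is MF), and form $\vec{C}_{k}=\vec{A}_{k}\oplus\left(\vec{B}_{k}\right)^{\left(ks_{k}\right)}$, so the $\vec{A}_{k}$ block restores the norm condition for $\vec{x}$ while its normalized-trace weight tends to $0$, giving $\left(\vec{C}_{k},\tau\right)\overset{\mathrm{dist}}{\longrightarrow}\left(\vec{x},\varphi\circ\pi\right)$ --- and only then obtains (4) from (3) (applied to the MF subalgebra $\pi\left(\mathcal{A}\right)\subseteq\prod^{\alpha}\mathcal{M}_{k}\left(\mathbb{C}\right)$) together with Lemma \ref{trace} and a diagonal argument converting $\alpha$-limits into sequential limits. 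So you should prove (3) first by this direct-sum trick and derive the ``if'' half of (4), (5), and the reduction step in (6) from it, rather than the reverse. Your treatments of (1) (closedness via Lemma \ref{trace}, convexity by block sums, nonemptiness from MF-ness) and of (2) (vanishing-weight copy of a t.d.-convergent sequence plus repeated finite-dimensional blocks) are essentially the paper's arguments and are fine, modulo writing out the integrality bookkeeping you acknowledge; for (6) the paper simply quotes \cite[Proposition 3.2]{HS4} for the minimal tensor product and then uses (3), which is a cleaner route than re-deriving the exactness estimate yourself.
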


\begin{proof}
$\left(  1\right)  $. It follows from the preceding lemma that $\mathcal{T}%
_{MF}\left(  \mathcal{A}\right)  $ is weak*-closed. For convexity it suffices
to show that $\left(  \tau+\rho\right)  /2\in\mathcal{T}_{MF}\left(
\mathcal{A}\right)  $ whenever $\tau,\rho\in\mathcal{T}_{MF}\left(
\mathcal{A}\right)  $. Choose sequences $\left\{  m_{k}\right\}  $ and
$\left\{  s_{k}\right\}  $ of positive integers and $\vec{A}_{k}\in
\mathcal{M}_{m_{k}}^{n}\left(  \mathbb{C}\right)  $ and $\vec{B}_{k}%
\in\mathcal{M}_{s_{k}}^{n}\left(  \mathbb{C}\right)  $ such that $\vec{A}%
_{k}\overset{t.d.}{\longrightarrow}\vec{x}$ and $\vec{B}_{k}\overset
{t.d.}{\longrightarrow}\vec{x}$ and such that, for every $\ast$-polynomial
$p$, we have
\[
\lim_{k\rightarrow\infty}\tau_{m_{k}}\left(  p\left(  \vec{A}_{k}\right)
\right)  =\tau\left(  p\left(  \vec{x}\right)  \right)  \text{ and }%
\lim_{k\rightarrow\infty}\tau_{s_{k}}\left(  p\left(  \vec{B}_{k}\right)
\right)  =\rho\left(  p\left(  \vec{x}\right)  \right)  .
\]
For each $k\in\mathbb{N}$, let $\vec{T}_{k}=\vec{A}_{k}^{\left(  s_{k}\right)
}\oplus\vec{B}_{k}^{\left(  m_{k}\right)  }\in\mathcal{M}_{2s_{k}m_{k}}%
^{n}\left(  \mathbb{C}\right)  $, where $D^{\left(  t\right)  }$ denotes a
direct sum of $t$ copies of the operator $D.$ It follows that $\vec{T}%
_{k}\overset{t.d.}{\longrightarrow}\vec{x}$ and, for every $\ast$ -polynomial
$p$, we have
\[
\lim_{k\rightarrow\infty}\tau_{2m_{k}s_{k}}\left(  p\left(  \vec{T}%
_{k}\right)  \right)  =\lim_{k\rightarrow\infty}\frac{Tr\left(  p\left(
\vec{A}_{k}^{\left(  s_{k}\right)  }\oplus\vec{B}_{k}^{\left(  m_{k}\right)
}\right)  \right)  }{2m_{k}s_{k}}%
\]%
\[
=\lim_{k\rightarrow\infty}\frac{s_{k}m_{k}\tau_{m_{k}}\left(  p\left(  \vec
{A}_{k}\right)  \right)  +m_{k}s_{k}\tau_{s_{k}}\left(  p\left(  \vec{B}%
_{k}\right)  \right)  }{2m_{k}s_{k}}=\left(  \tau\left(  \vec{x}\right)
+\rho\left(  \vec{x}\right)  \right)  /2.
\]
Hence $\mathcal{T}_{MF}\left(  \mathcal{A}\right)  $ is convex.

$\left(  2\right)  .$ Suppose $\tau$ is a finite-dimensional trace on
$\mathcal{A}$. Then there are positive integers $s_{1},\ldots,s_{w}$,
nonnegative numbers $t_{1},\ldots,t_{w}$ with $%
{\displaystyle\sum_{j=1}^{w}}
t_{j}=1$, and unital $\ast$-homomorphisms $\pi_{j}:\mathcal{A}\rightarrow
\mathcal{M}_{s_{j}}\left(  \mathbb{C}\right)  ,$ for $1\leq j\leq w$, such
that, for every $a\in\mathcal{A}$, we have%
\[
\tau\left(  a\right)  =%
{\displaystyle\sum_{j=1}^{w}}
t_{j}\tau_{s_{j}}\left(  \pi_{j}\left(  a\right)  \right)  .
\]
Since $\mathcal{T}_{MF}\left(  \mathcal{A}\right)  $ is weak*-compact, it is
sufficient to consider the case where each $t_{j}$ is a rational number, i.e.,
$t_{j}\acute{=}\frac{u_{j}}{v_{j}}$ with $u_{j},v_{j}\in\mathbb{N}$. Thus
there is a positive integer $N$ and a representation $\pi:\mathcal{A}%
\rightarrow\mathcal{M}_{N}\left(  \mathbb{C}\right)  $ such that, for every
$a\in\mathcal{A}$, $\tau\left(  a\right)  =\tau_{N}\left(  \pi\left(
a\right)  \right)  $. Since $\mathcal{A}$ is $MF$, there is a sequence
$\left\{  m_{k}\right\}  $ of positive integers and sequences $\left\{
A_{1k}\right\}  ,\ldots,\left\{  A_{nk}\right\}  $ with $A_{1k},\ldots
,A_{nk}\in\mathcal{M}_{m_{k}}\left(  \mathbb{C}\right)  $ such that, for every
$\ast$-polynomial $p$,
\[
\lim_{k\rightarrow\infty}\left\Vert p\left(  A_{1k},\ldots,A_{nk}\right)
\right\Vert =\left\Vert p\left(  x_{1},\ldots,x_{n}\right)  \right\Vert .
\]
For each $k\in\mathbb{N},$ and each $j,$ $1\leq j\leq n,$ define
\[
B_{jk}=A_{jk}\oplus\pi\left(  x_{j}\right)  ^{km_{k}}\in\mathcal{M}_{s_{k}%
}\left(  \mathbb{C}\right)  ,
\]
where $s_{k}=m_{k}\left(  1+kN\right)  $. It is clear that $\vec{B}%
_{k}=\left(  B_{1k},\ldots,B_{nk}\right)  \overset{t.d.}{\longrightarrow}%
\vec{x}$ and that $\lim_{k\rightarrow\infty}\tau_{s_{k}}\left(  p\left(
\vec{B}_{k}\right)  \right)  =\tau\left(  p\left(  \vec{x}\right)  \right)  $
for every $\ast$-polynomial $p$. Hence $\tau\in\mathcal{T}_{MF}\left(
\mathcal{A}\right)  $.

(3). Suppose $\varphi\in\mathcal{T}_{MF}\left(  \pi\left(  \mathcal{A}\right)
\right)  $ and choose a sequence $\vec{B}_{k}\in\mathcal{M}_{m_{k}}^{n}\left(
\mathbb{C}\right)  $ such that $\vec{B}_{k}\overset{t.d.}{\longrightarrow
}\left(  \pi\left(  x_{1}\right)  ,\ldots,\pi\left(  x_{n}\right)  \right)
=\pi\left(  \vec{x}\right)  $ and such that $\left(  \vec{B}_{k}%
,\varphi\right)  \overset{\text{\textrm{dist}}}{\longrightarrow}\left(
\pi\left(  \vec{x}\right)  ,\varphi\right)  .$ Since $\mathcal{A}$ is MF,
there is a sequence $\vec{A}_{k}\in\mathcal{M}_{s_{k}}^{n}\left(
\mathbb{C}\right)  $ such that $\vec{A}_{k}\overset{t.d.}{\longrightarrow}%
\vec{x}$. If we let $\vec{C}_{k}=\vec{A}_{k}\oplus\left(  \vec{B}_{k}\right)
^{\left(  ks_{k}\right)  }$, it is clear that $\vec{C}_{k}\overset
{t.d.}{\longrightarrow}\vec{x}$ and $\left(  \vec{C}_{k},\tau_{\left(
km_{k}+1\right)  s_{k}}\right)  \overset{\text{\textrm{dist}}}{\longrightarrow
}\left(  \vec{x},\varphi\circ\pi\right)  $.

(4). This is an immediate consequence of statement (3) and Lemma \ref{trace}.

(5). This follows from (4).

(6). We know from \cite[Proposition 3.2]{HS4} that $\mathcal{A}\otimes
_{\text{\textrm{min}}}\mathcal{B}$ is MF. Also there is a natural surjective
$\ast$-homomorphism $\pi:\mathcal{A}\otimes_{\nu}\mathcal{B}\rightarrow
\mathcal{A}\otimes_{\text{\textrm{min}}}\mathcal{B}$. Since $\alpha
\otimes\beta$ factors through $\pi$, it follows from part $\left(  3\right)  $
of this proposition that we need only show that $\alpha\otimes\beta
\in\mathcal{T}_{MF}\left(  \mathcal{A}\otimes_{\text{\textrm{min}}}%
\mathcal{B}\right)  $. However, the proof of \cite[Proposition 3.2]{HS4}
easily yields this fact.\bigskip\ 
\end{proof}

\subsection{The MF Ideal}

We know \cite{HLS} that if $\mathcal{A}$ $=\mathcal{K}\left(  \ell^{2}\right)
+\mathbb{C}1$, then
\[
\delta_{top}\left(  x_{1},\ldots,x_{n}\right)  =0
\]
for any generating set $\left\{  x_{1},\ldots,x_{n}\right\}  $. This is
because any trace on $\mathcal{A}$ must vanish on $\mathcal{K}\left(  \ell
^{2}\right)  $. We want to investigate this phenomenon further. Suppose
$\mathcal{A}$ is an $MF$-algebra. We define the \emph{MF-ideal} of
$\mathcal{\ A}$ as%
\[
\mathcal{J}_{MF}=\mathcal{J}_{MF}\left(  \mathcal{A}\right)  =\left\{
a\in\mathcal{A}:\forall\tau\in\mathcal{T}_{MF}\left(  \mathcal{A}\right)
,\text{ }\tau\left(  a^{\ast}a\right)  =0\right\}  .
\]
\bigskip

It is easy to describe the elements of $\mathcal{J}_{MF}\left(  \mathcal{A}%
\right)  $ in terms of noncommutative continuous functions. Recall from
\cite{HKM} that
\[
C^{\ast}\left(  x_{1},\ldots,x_{n}\right)  =\left\{  \varphi\left(
x_{1},\ldots,x_{n}\right)  :\varphi\text{ is a noncommutative continuous
function}\right\}  .
\]

\begin{lemma}
\label{ncmf} Suppose $\mathcal{A}=C^{\ast}\left(  x_{1},\ldots,x_{n}\right)  $
and $\varphi$ is a noncommutative continuous function of $n$ variables. The
following are equivalent:

\begin{enumerate}
\item $\varphi\left(  x_{1},\ldots,x_{n}\right)  \in\mathcal{J}_{MF}\left(
\mathcal{A}\right)  .$

\item Whenever $\vec{A}_{k}\in\mathcal{M}_{m_{k}}^{n}\left(  \mathbb{C}%
\right)  $ and $\vec{A}_{k}\overset{t.d.}{\longrightarrow}\vec{x}$, we have%
\[
\lim_{k\rightarrow\infty}\left\Vert \varphi\left(  \vec{A}_{k}\right)
\right\Vert _{2}=\lim_{k\rightarrow\infty}\left[  \tau_{m_{k}}\left(
\varphi\left(  \vec{A}_{k}\right)  ^{\ast}\varphi\left(  \vec{A}_{k}\right)
\right)  \right]  ^{1/2}=0.
\]

\item For every $\omega>0$, there is an $\varepsilon_{0}>0$, $N_{0},k_{0}%
\in\mathbb{N}$, such that, for every $0<\varepsilon<\varepsilon_{0},k\geq
k_{0}$, and $N\geq N_{0},$ and for every $\vec{A}\in\Gamma^{\text{\textrm{top}%
}}\left(  \vec{x};N,\varepsilon,k\right)  $, we have%
\[
\left\Vert \varphi\left(  \vec{A}\right)  \right\Vert _{2}<\omega.
\]

\end{enumerate}
\end{lemma}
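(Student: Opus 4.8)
The plan is to prove the three implications $(1)\Rightarrow(2)\Rightarrow(3)\Rightarrow(1)$, using Lemma~\ref{trace} and the characterization of $\mathcal{T}_{MF}$ via sequences of matrices. The crucial point throughout is that $\|\varphi(\vec A_k)\|_2^2 = \tau_{m_k}(\varphi(\vec A_k)^\ast\varphi(\vec A_k))$ and that $\psi^\ast\psi$ is again a noncommutative continuous function, so the quantities appearing in the statement are values of continuous functions at microstates.

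For $(1)\Rightarrow(2)$: suppose $\vec A_k\overset{t.d.}{\longrightarrow}\vec x$ with $\vec A_k\in\mathcal{M}_{m_k}^n(\mathbb{C})$. First I would pass to a subnet/subsequence along which $\tau_{m_k}$ converges pointwise on $\ast$-polynomials in the $A_{jk}$ to some limit; by the standard compactness argument (the values $\tau_{m_k}(p(\vec A_k))$ are bounded, and one diagonalizes over the countable set of rational-coefficient polynomials) one obtains a tracial state $\tau$ on $\mathcal{A}$ with $(\vec A_k,\tau_{m_k})\overset{\mathrm{dist}}{\longrightarrow}(\vec x,\tau)$. Since $\vec A_k\overset{t.d.}{\longrightarrow}\vec x$ already holds, $\tau$ is by definition an MF-trace. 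Then $(1)$ gives $\tau(\varphi(\vec x)^\ast\varphi(\vec x))=0$, and passing $\varphi^\ast\varphi$ through the distributional convergence (approximating $\varphi$ uniformly on the bounded set $\{\|\vec A\|\le \sup_k\|\vec A_k\|\}$ by a polynomial, as in the Remark preceding Lemma~\ref{trace}) yields $\lim_k \tau_{m_k}(\varphi(\vec A_k)^\ast\varphi(\vec A_k))=0$. Since this holds along every subsequence-with-convergent-moments of an arbitrary $t.d.$-convergent sequence, and every subsequence has such a further subsequence, the full limit is $0$, which is exactly $(2)$.

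For $(3)\Rightarrow(1)$: let $\tau\in\mathcal{T}_{MF}(\mathcal{A})$ and fix $\omega>0$. Take $\varepsilon_0,N_0,k_0$ as in $(3)$. By Lemma~\ref{trace}, for each large $N$ and small $\varepsilon$ there is a matrix tuple $\vec A\in\mathcal{M}_k(\mathbb{C})^n$ with $k\ge k_0$ that is topologically $(N,\varepsilon)$-close to $\vec x$ and has $|\tau_k(p(\vec A))-\tau(p(\vec x))|<\varepsilon$ for all $p\in\mathbb{P}_N$; in particular $\vec A\in\Gamma^{\text{top}}(\vec x;N,\varepsilon,k)$. Choosing a polynomial $q$ approximating $\varphi^\ast\varphi$ uniformly on the relevant ball and letting $N\to\infty$, $\varepsilon\to 0$, one gets $\tau(\varphi(\vec x)^\ast\varphi(\vec x))=\lim \tau_k(q(\vec A))=\lim\|\varphi(\vec A)\|_2^2\le\omega^2$. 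Since $\omega>0$ was arbitrary, $\tau(\varphi(\vec x)^\ast\varphi(\vec x))=0$; as $\tau$ was an arbitrary MF-trace this is $(1)$.

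The implication I expect to be the main obstacle is $(2)\Rightarrow(3)$, since it goes from a statement about exact $t.d.$-convergent sequences to a uniform statement over \emph{all} approximate microstates $\Gamma^{\text{top}}(\vec x;N,\varepsilon,k)$, and this requires a compactness/contradiction argument rather than a direct construction. I would argue by contradiction: if $(3)$ fails, there is $\omega>0$ and sequences $\varepsilon_j\to 0$, $N_j\to\infty$, $k_j\to\infty$, and $\vec A^{(j)}\in\Gamma^{\text{top}}(\vec x;N_j,\varepsilon_j,k_j)$ with $\|\varphi(\vec A^{(j)})\|_2\ge\omega$. The tuples $\vec A^{(j)}$ are uniformly bounded (their norms are controlled by $\|x_j\|+\varepsilon_j$), so passing to a subsequence on which the moments $\tau_{k_j}$ converge gives a tracial state on $\mathcal{A}$, and the conditions $\vec A^{(j)}\in\Gamma^{\text{top}}(\vec x;N_j,\varepsilon_j,k_j)$ with $N_j\to\infty,\varepsilon_j\to 0$ force $\vec A^{(j)}\overset{t.d.}{\longrightarrow}\vec x$. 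Then $(2)$ applies and gives $\|\varphi(\vec A^{(j)})\|_2\to 0$, contradicting $\|\varphi(\vec A^{(j)})\|_2\ge\omega$. The delicate bookkeeping is checking that $\vec A^{(j)}\overset{t.d.}{\longrightarrow}\vec x$ genuinely follows — i.e.\ that matching norms of all $\ast$-polynomials in $\mathbb{P}_{N_j}$ to within $\varepsilon_j$, with $N_j\to\infty$ and $\varepsilon_j\to 0$, implies convergence of $\|p(\vec A^{(j)})\|$ to $\|p(\vec x)\|$ for \emph{every} fixed $\ast$-polynomial $p$ — which holds because any fixed $p$ eventually lies in $\mathbb{P}_{N_j}$ up to rescaling, together with the density of rational-coefficient polynomials.
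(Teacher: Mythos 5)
Your argument is correct and is exactly the standard compactness/approximation argument the paper has in mind --- the paper's own proof simply declares the equivalences obvious, so you are supplying the details it omits. The only point worth a word is in $(3)\Rightarrow(1)$: Lemma \ref{trace} does not literally produce microstates of size $k\ge k_0$, but replacing $\vec{A}$ by a direct sum $\vec{A}\oplus\cdots\oplus\vec{A}$ of sufficiently many copies (which changes neither the operator norms nor the normalized traces) gives arbitrarily large $k$, so this is a cosmetic rather than a genuine gap.
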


\begin{proof}
The equivalence of statements $\left(  2\right)  $ and $\left(  3\right)  $ is
obvious, as is the equivalence of statements $\left(  1\right)  $ and $\left(
2\right)  $.
\end{proof}

\bigskip

Since, with respect to the $\left\Vert \cdot\right\Vert _{2}$-norm in the
topological $\Gamma$-sets, the elements corresponding to the elements of
$\mathcal{J}_{MF}\left(  \mathcal{A}\right)  $ converge to $0,$ it might seem
that $\delta_{top}$ may only depend on $\mathcal{A}/\mathcal{J}_{MF}\left(
\mathcal{A}\right)  $. However, one possible complication is that
$\mathcal{A}/\mathcal{J}_{MF}\left(  \mathcal{A}\right)  $ might not be an
$MF$-algebra. Here are two sample positive results.

For the next theorem we need to set up some notation. Suppose $k$ is a (large)
positive integer and $d_{1}\leq\cdots\leq d_{s}$ are positive integers.
Suppose $m_{1},\ldots,m_{s}$ are nonnegative integers such that%
\[%
{\displaystyle\sum_{t=1}^{s}}
d_{t}m_{t}\leq k.
\]
We define a (not necessarily unital) $\ast$-homomorphism
\[
\rho:\mathcal{M}_{d_{1}}\left(  \mathbb{C}\right)  \oplus\cdots\oplus
\mathcal{M}_{d_{s}}\left(  \mathbb{C}\right)  \rightarrow\mathcal{M}%
_{k}\left(  \mathbb{C}\right)
\]
by
\[
\rho\left(  A_{1}\oplus\cdots\oplus A_{s}\right)  =A_{1}^{\left(
m_{1}\right)  }\oplus\cdots\oplus A_{s}^{\left(  m_{s}\right)  }\oplus0,
\]
where $A_{1}^{\left(  m_{1}\right)  }\oplus\cdots\oplus A_{s}^{\left(
m_{s}\right)  }\oplus0$ is the block diagonal $k\times k$ matrix whose blocks
are $m_{1}$ copies of $A_{1}$, followed by $m_{2}$ copies of $A_{2},\ldots,$
followed by $m_{s}$ copies of $A_{s}$ with the remaining block (if any)
consisting of a zero matrix. We call such a representation $\rho$ a
\textbf{canonical representation} of $\mathcal{M}_{d_{1}}\left(
\mathbb{C}\right)  \oplus\cdots\oplus\mathcal{M}_{d_{s}}\left(  \mathbb{C}%
\right)  $ in $\mathcal{M}_{k}\left(  \mathbb{C}\right)  .$ Note that the
canonical representation $\rho$ is completely determined by the choice of
$m_{1},\ldots,m_{s}$, and so the number of canonical representations is no
more than $k^{s}$. In \cite{HS3} it was shown that if $\dim C^{\ast}\left(
x_{1},\ldots,x_{n}\right)  =d<\infty$, then $\delta_{top}\left(  x_{1}%
,\ldots,x_{n}\right)  =1-\frac{1}{d}.$

\begin{theorem}
Suppose $\mathcal{A}=C^{\ast}\left(  x_{1},\ldots,x_{n}\right)  $ is an
$MF$-algebra and $\mathcal{A}/\mathcal{J}_{MF}\left(  \mathcal{A}\right)  $
has dimension $d<\infty$. Then%
\[
\delta_{top}\left(  x_{1},\ldots,x_{n}\right)  =1-\frac{1}{d}.
\]

\end{theorem}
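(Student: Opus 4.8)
The plan is to prove two inequalities. For the lower bound $\delta_{\text{top}}(x_1,\dots,x_n)\ge 1-\frac1d$, I would pass to the quotient and use the change-of-variables philosophy together with the finite-dimensional result already cited from \cite{HS3}. Since $\mathcal{A}/\mathcal{J}_{MF}(\mathcal{A})$ has dimension $d<\infty$, it is a direct sum of matrix algebras, hence RFD, hence MF, and (being finite-dimensional) trivially equals $C^\ast$ of the images $\bar x_1,\dots,\bar x_n$ of the generators; by \cite{HS3} we have $\delta_{\text{top}}(\bar x_1,\dots,\bar x_n)=1-\frac1d$. The point is that microstates for $\vec x$ can be built from microstates for $\vec{\bar x}$: given an $(N,\varepsilon)$-topological microstate $\vec B\in\mathcal{M}_\ell^n$ for $\vec{\bar x}$, and an MF sequence $\vec A_k\overset{t.d.}{\to}\vec x$ (which exists since $\mathcal A$ is MF), the direct sum $\vec A_k\oplus \vec B^{(\text{huge})}$ is a topological microstate for $\vec x$ whose normalized trace norm is dominated by the $\vec B$-part. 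This shows that $\nu_\infty$ (equivalently, via Proposition \ref{Szarek}, $\nu_2$) of $\Gamma^{\text{top}}(\vec x;\dots)$ is at least (a unitary-orbit count times) the covering number of the $\vec B$-microstate space for $\vec{\bar x}$, which drives the dimension down to $1-\frac1d$ by the \cite{HS3} computation. In fact I would phrase this more cleanly via $\delta_{\text{top}}^{1/2}$ and Corollary \ref{pig}, or by directly invoking Theorem \ref{CV}, but the simplest route is: every trace on $\mathcal A$ that is an MF-trace factors through $\mathcal A/\mathcal J_{MF}$, so the ``effective'' $\|\cdot\|_2$-geometry of the microstates is governed by the finite-dimensional quotient.

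For the upper bound $\delta_{\text{top}}(x_1,\dots,x_n)\le 1-\frac1d$, this is where the notation about canonical representations comes in, and I expect this to be the main obstacle. The idea is to show that any topological microstate $\vec A\in\Gamma^{\text{top}}(\vec x;N,\varepsilon,k)$ is, up to small $\|\cdot\|_2$-error and up to a unitary conjugation, close to the image under a canonical representation $\rho$ of an element of $\mathcal A/\mathcal J_{MF}\cong \bigoplus_{t=1}^s \mathcal M_{d_t}(\mathbb C)$. Heuristically: since the MF-traces are exactly the traces vanishing on $\mathcal J_{MF}$, and since Lemma \ref{ncmf}(3) says that for $\varphi(\vec x)\in\mathcal J_{MF}$ the quantity $\|\varphi(\vec A)\|_2$ is forced small on the microstate space, the bulk (in trace) of the microstate $\vec A$ must ``live on'' a representation of the $d$-dimensional quotient. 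One then approximates $\vec A$ in $\|\cdot\|_2$ by $W\rho(\vec C)W^\ast$ for some unitary $W$, some canonical $\rho$, and some $\vec C$ in the (fixed, $d$-dimensional, hence compact) quotient algebra. Counting: the number of canonical $\rho$ is at most $k^s$ (polynomial in $k$, negligible), the choices of $\vec C$ live in a bounded set of a fixed finite-dimensional space (negligible), and the unitary orbit $\{W\,\cdot\,W^\ast\}$ has $\|\cdot\|_2$-covering number roughly that of $\mathcal U_k$ modulo a stabilizer; the stabilizer of $\rho(\vec C)$ is large — its commutant contains a copy of $\bigoplus \mathcal M_{m_t}(\mathbb C)$ of total dimension $\sum m_t^2\ge \frac1s(\sum m_t)^2$, and since $\sum d_t m_t\le k$ with the $d_t$ summing to $d$, one gets $\dim(\text{commutant})\gtrsim k^2/d$. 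Hence the $\omega$-covering number of the orbit is at most $(\text{const}/\omega)^{k^2-k^2/d+o(k^2)}$, giving $\delta_{\text{top}}\le 1-\frac1d$ after dividing by $-k^2\log\omega$ and taking limits.

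The key steps in order: (i) record that $\mathcal A/\mathcal J_{MF}$ is finite-dimensional, hence $\cong\bigoplus_{t=1}^s\mathcal M_{d_t}(\mathbb C)$ with $\sum d_t^2 = d$ (or $\sum d_t = $ appropriate count — I'd match conventions with \cite{HS3}); (ii) lower bound via direct-sum microstate construction + \cite{HS3}'s formula, or via Theorem \ref{CV}; (iii) for the upper bound, prove the structural approximation lemma: any $\vec A\in\Gamma^{\text{top}}(\vec x;N,\varepsilon,k)$ is within $\omega$ in $\|\cdot\|_2$ of the unitary orbit of a canonical representation applied to a bounded tuple in the quotient — this uses that $\mathcal A/\mathcal J_{MF}$ being finite-dimensional forces the microstates' spectral/multiplicity data to cluster near finitely many patterns, with the ``error directions'' all lying in $\mathcal J_{MF}$ where $\|\cdot\|_2$ is small by Lemma \ref{ncmf}; (iv) do the covering count for unitary orbits using Lemma 1.1(3) and the commutant dimension estimate, combined with Proposition \ref{Szarek} to move between $\nu_\infty$ and $\nu_2$. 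The main obstacle is step (iii): making precise that ``most of the microstate lives on a $d$-dimensional representation up to $\|\cdot\|_2$-small perturbation,'' which requires a compactness/finite-net argument over the (compact) state space or representation variety of the $d$-dimensional algebra $\mathcal A/\mathcal J_{MF}$, together with care that the canonical-representation multiplicities $m_t$ can be chosen with $\sum d_t m_t$ as close to $k$ as the near-tracial condition allows, so that the commutant dimension is genuinely $\ge k^2/d - o(k^2)$.
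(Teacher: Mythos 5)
Your overall strategy coincides with the paper's: the lower bound is obtained exactly as you propose (the quotient map onto $\mathcal{A}/\mathcal{J}_{MF}\left(  \mathcal{A}\right)  \cong\mathcal{M}_{d_{1}}\left(  \mathbb{C}\right)  \oplus\cdots\oplus\mathcal{M}_{d_{s}}\left(  \mathbb{C}\right)  $, Corollary \ref{pig}, and the finite-dimensional formula from \cite{HS3}), and the upper bound rests on the same structural claim you isolate as step (iii): every $\vec{A}\in\Gamma^{\text{top}}\left(  \vec{x};N,\varepsilon,k\right)  $ is within $\omega$ in $\left\Vert \cdot\right\Vert _{2}$ of a unitary conjugate of a canonical representation applied to $\pi\left(  \vec{x}\right)  $, with $\tau_{k}\left(  \rho\left(  1\right)  \right)  $ near $1$, followed by the $k^{s}$ count of canonical representations and a Szarek-type covering of the unitaries modulo the commutant. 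But step (iii) is where all the content lies, and your sketch of it has a genuine gap: Lemma \ref{ncmf} plus ``compactness over the representation variety'' tells you that the trace data of a microstate is governed by the finite-dimensional quotient, but it does not by itself produce the conjugating unitary. The paper closes this by contradiction through a tracial ultraproduct: a sequence of putative counterexample microstates has (after passing to a subsequence, by weak*-compactness) distributions converging to an MF-trace $\tau$; the induced homomorphism $\pi_{0}$ of $\mathcal{A}$ into $\prod^{\alpha}\left(  \mathcal{M}_{k_{m}}\left(  \mathbb{C}\right)  ,\tau_{k_{m}}\right)  $ satisfies $\tau=\sigma\circ\pi_{0}$ with $\sigma$ faithful, so $\mathcal{J}_{MF}\subseteq\ker\pi_{0}$ and $\pi_{0}$ factors through the $d$-dimensional quotient; the resulting trace on the quotient is approximated by rational weights realized by canonical representations; and then Hadwin's approximate equivalence theorem \cite{DH1} (tuples with the same distribution generating a finite-dimensional, hence hyperfinite, algebra are unitarily conjugate in $\left\Vert \cdot\right\Vert _{2}$ inside the ultraproduct) supplies the unitary, which is pulled back to finite level to contradict the choice of the sequence. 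Some input of this kind --- approximate unitary equivalence in $\left\Vert \cdot\right\Vert _{2}$ for tuples with nearly equal distributions generating a finite-dimensional algebra --- is indispensable and is absent from your outline. (Also, you only need the easy direction that every MF-trace vanishes on $\mathcal{J}_{MF}$ and hence factors through the quotient; the converse you assert is not established in the paper and is not needed.)

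Two smaller corrections to your counting. The dimension is $d=\sum_{t}d_{t}^{2}$, not $\sum_{t}d_{t}$; and the commutant estimate should use Cauchy-Schwarz weighted by the $d_{t}$: from $\sum_{t}d_{t}m_{t}\geq\left(  1-\omega/4\right)  k$ one gets $\sum_{t}m_{t}^{2}\geq\left(  \sum_{t}d_{t}m_{t}\right)  ^{2}/\sum_{t}d_{t}^{2}\geq\left(  1-\omega/4\right)  ^{2}k^{2}/d$, which is exactly what makes the exponent $k^{2}-k^{2}/d+o\left(  k^{2}\right)  $ come out. Your unweighted inequality $\sum_{t}m_{t}^{2}\geq\frac{1}{s}\left(  \sum_{t}m_{t}\right)  ^{2}$ only yields a lower bound of order $k^{2}/\left(  s\max_{t}d_{t}^{2}\right)  $, which is strictly weaker than $k^{2}/d$ unless all the $d_{t}$ are equal, and would give only $\delta_{\text{top}}\leq1-1/\left(  s\max_{t}d_{t}^{2}\right)  $.
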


\begin{proof}
It follows from the change of variables theorem that we can assume that
$\left\Vert x_{1}\right\Vert ,\ldots,\left\Vert x_{n}\right\Vert \leq1$ and we
can add $1$ to the generating set, so we can assume that $x_{1}=1$. Since
$\mathcal{A}/\mathcal{J}_{MF}\left(  \mathcal{A}\right)  $ is
finite-dimensional, there is a surjective unital $\ast$-homomorphism
$\pi:\mathcal{A}\rightarrow\mathcal{M}_{d_{1}}\left(  \mathbb{C}\right)
\oplus\cdots\oplus\mathcal{M}_{d_{s}}\left(  \mathbb{C}\right)  $ with
$\ker\pi=\mathcal{J}_{MF}\left(  \mathcal{A}\right)  $. It follows from
Corollary \ref{pig} and \cite{HS3} that
\[
\delta_{top}\left(  x_{1},\ldots,x_{n}\right)  \geq\delta_{top}\left(
\pi\left(  x_{1}\right)  ,\ldots,\pi\left(  x_{n}\right)  \right)  =1-\frac
{1}{d}.
\]
Suppose $\omega>0$.

\textbf{Claim:} There is an $\varepsilon_{0}>0$ and $k_{0},N_{0}\in\mathbb{N}$
such that, for every $0<\varepsilon<\varepsilon_{0},$ every $N\geq N_{0},$
every $k\geq k_{0},$ and every $\vec{A}\in\Gamma^{\text{top}}(x_{1}%
,\ldots,x_{n};k,\varepsilon,P_{1},\ldots,P_{N})$ there is a canonical
representation $\rho:\mathcal{M}_{d_{1}}\left(  \mathbb{C}\right)
\oplus\cdots\oplus\mathcal{M}_{d_{s}}\left(  \mathbb{C}\right)  \rightarrow
\mathcal{M}_{k}\left(  \mathbb{C}\right)  $ and a unitary matrix
$U\in\mathcal{M}_{k}\left(  \mathbb{C}\right)  $ such that%
\[%
{\displaystyle\sum_{r=1}^{n}}
\left\Vert A_{r}-U\rho\left(  \pi\left(  x_{r}\right)  \right)  U^{\ast
}\right\Vert _{2}<\omega/4,
\]
and%
\[
1-\tau_{k}\left(  \rho\left(  1\right)  \right)  <\omega/4.
\]

\textbf{Proof of Claim:} Assume the claim is false. Then, for every positive
integer $m$, there is a positive integer $k_{m}\geq m$ and an $\vec{A}%
_{m}=\left(  A_{m1},\ldots,A_{mn}\right)  \in\Gamma^{\text{top}}\left(
x_{1},\ldots,x_{n};m,\frac{1}{m},k_{m}\right)  $ such that, for every
canonical representation
\[
\rho:\mathcal{M}_{d_{1}}\left(  \mathbb{C}\right)  \oplus\cdots\oplus
\mathcal{M}_{d_{s}}\left(  \mathbb{C}\right)  \rightarrow\mathcal{M}_{k_{m}%
}\left(  \mathbb{C}\right)
\]
and every unitary matrix $U\in\mathcal{M}_{k_{m}}\left(  \mathbb{C}\right)
$,
\[%
{\displaystyle\sum_{r=1}^{n}}
\left\Vert A_{mr}-U\rho\left(  \pi\left(  x_{r}\right)  \right)  U^{\ast
}\right\Vert _{2}\geq\omega/4.
\]
Note that any subsequence of $\left\{  \vec{A}_{m}\right\}  $ has the same
properties, so we can assume that there is a $\tau\in\mathcal{T}_{MF}\left(
\mathcal{A}\right)  $ such that%
\[
\left(  \vec{A}_{m},\tau_{k_{m}}\right)  \overset{\mathrm{dist}}%
{\longrightarrow}\left(  \vec{x},\tau\right)  .
\]
We know from the definition of $\left\{  \vec{A}_{m}\right\}  $ that%
\[
\vec{A}_{m}\overset{\mathrm{t.d.}}{\longrightarrow}\vec{x}.
\]
We now let $\alpha$ be a free ultrafilter on $\mathbb{N}$, and we let $\left(
\mathcal{N},\sigma\right)  $ be the tracial ultraproduct $%
{\displaystyle\prod\limits^{\alpha}}
\left(  \mathcal{M}_{k_{m}}\left(  \mathbb{C}\right)  ,\tau_{k_{m}}\right)  .$
Let $y_{j}=\left\{  A_{mj}\right\}  _{\alpha}\in\mathcal{N}$ for $1\leq j\leq
n$. It follows that, for every noncommutative polynomial $p$,
\[
\left\Vert p\left(  \vec{y}\right)  \right\Vert \leq\lim_{m\rightarrow\alpha
}\left\Vert p\left(  \vec{A}_{m}\right)  \right\Vert =\left\Vert p\left(
\vec{x}\right)  \right\Vert .
\]
Hence $\pi_{0}:\mathcal{A}\rightarrow\mathcal{N}$ defined by $\pi_{0}\left(
p\left(  \vec{x}\right)  \right)  =p\left(  \vec{y}\right)  $ is a unital
$\ast$-homomorphism. Moreover, $\tau=\sigma\circ\pi_{0},$ and since $\sigma$
is faithful on $\mathcal{N}$, we have%
\[
\ker\pi=\mathcal{J}_{MF}\left(  \mathcal{A}\right)  \subseteq\ker\pi_{0}.
\]
Hence there is a unital $\ast$-homomorphism $\pi_{1}:\mathcal{M}_{d_{1}%
}\left(  \mathbb{C}\right)  \oplus\cdots\oplus\mathcal{M}_{d_{s}}\left(
\mathbb{C}\right)  \rightarrow\mathcal{N}$ such that%
\[
\pi_{0}=\pi_{1}\circ\pi.
\]
Since $\sigma\circ\pi_{1}$ is a tracial state on $\mathcal{M}_{d_{1}}\left(
\mathbb{C}\right)  \oplus\cdots\oplus\mathcal{M}_{d_{s}}\left(  \mathbb{C}%
\right)  $, there are $t_{1},\ldots,t_{s}\geq0$ with $%
{\displaystyle\sum_{j=1}^{s}}
t_{j}=1,$ such that%
\[
\sigma\circ\pi_{1}\left(  T_{1}\oplus\cdots\oplus T_{s}\right)  =%
{\displaystyle\sum_{j=1}^{s}}
t_{j}\tau_{d_{j}}\left(  T_{j}\right)  .
\]
For each $\varepsilon>0$ we can find positive rational numbers of the form
$\frac{z_{1}\left(  \varepsilon\right)  }{d\left(  \varepsilon\right)
},\ldots,\frac{z_{s}\left(  \varepsilon\right)  }{d\left(  \varepsilon\right)
}$ such that
\[%
{\displaystyle\sum_{r=1}^{s}}
\left\vert t_{r}-\frac{z_{r}\left(  \varepsilon\right)  }{d\left(
\varepsilon\right)  }\right\vert <\varepsilon,
\]
and%
\[%
{\displaystyle\sum_{r=1}^{s}}
\frac{z_{r}\left(  \varepsilon\right)  }{d\left(  \varepsilon\right)  }=1.
\]
For each positive integer $m,$ let $u_{\varepsilon rm}$ be the largest integer
not greater than $k_{m}/d_{r}d\left(  \varepsilon\right)  $ and note that
\[
\lim_{m\rightarrow\infty}\frac{u_{\varepsilon rm}}{k_{m}}=\frac{1}%
{d_{r}d\left(  \varepsilon\right)  }.
\]
We define a canonical representation
\[
\rho_{\varepsilon m}:\mathcal{M}_{d_{1}}\left(  \mathbb{C}\right)
\oplus\cdots\oplus\mathcal{M}_{d_{s}}\left(  \mathbb{C}\right)  \rightarrow
\mathcal{M}_{k_{m}}\left(  \mathbb{C}\right)
\]
by%
\[
\rho_{\varepsilon m}\left(  T\right)  =T_{1}^{\left(  z_{1}\left(
\varepsilon\right)  u_{\varepsilon1m}\right)  }\oplus\cdots\oplus
T_{s}^{\left(  z_{s}\left(  \varepsilon\right)  u_{\varepsilon sm}\right)
}\oplus0.
\]
Define a representation $\rho_{\varepsilon}:\mathcal{M}_{d_{1}}\left(
\mathbb{C}\right)  \oplus\cdots\oplus\mathcal{M}_{d_{s}}\left(  \mathbb{C}%
\right)  \rightarrow\mathcal{N}$ by
\[
\rho_{\varepsilon}\left(  T\right)  =\left\{  \rho_{\varepsilon m}\left(
T\right)  \right\}  _{\alpha}.
\]
It is clear that%
\[
\left(  \sigma\circ\rho_{\varepsilon}\right)  \left(  T\right)  =\lim
_{m\rightarrow\alpha}\frac{1}{k_{m}}%
{\displaystyle\sum_{r=1}^{s}}
z_{r}\left(  \varepsilon\right)  u_{\varepsilon rm}d_{r}\tau_{d_{r}}\left(
T_{r}\right)  =%
{\displaystyle\sum_{r=1}^{s}}
\frac{z_{r}\left(  \varepsilon\right)  }{d\left(  \varepsilon\right)  }%
\tau_{d_{r}}\left(  T_{r}\right)  .
\]
It is clear that, as $\varepsilon\rightarrow0^{+}$, we have%
\[
\left(  \left(  \rho_{\varepsilon}\left(  \pi\left(  x_{1}\right)  \right)
,\ldots,\rho_{\varepsilon}\left(  \pi\left(  x_{n}\right)  \right)  \right)
,\sigma\right)  \rightarrow\left(  \left(  y_{1},\ldots,y_{n}\right)
,\sigma\right)  .
\]
It follows from \cite{DH1} that there is an $\varepsilon>0$ and a unitary
element $U\in\mathcal{N}$ such that%
\[%
{\displaystyle\sum_{j=1}^{n}}
{\displaystyle\sum_{r=1}^{n}}
\left\Vert y_{r}-U\rho_{\varepsilon}\left(  \pi\left(  x_{r}\right)  \right)
U^{\ast}\right\Vert _{2}<\omega/4.
\]
We can write $U=\left\{  U_{m}\right\}  _{\alpha}$, where each $U_{m}$ is a
unitary matrix in $\mathcal{M}_{k_{m}}\left(  \mathbb{C}\right)  $. We then
have%
\[
\omega/4>%
{\displaystyle\sum_{j=1}^{n}}
{\displaystyle\sum_{r=1}^{n}}
\left\Vert y_{r}-U\rho_{\varepsilon}\left(  \pi\left(  x_{r}\right)  \right)
U^{\ast}\right\Vert _{2}=\lim_{m\rightarrow\infty}%
{\displaystyle\sum_{j=1}^{n}}
{\displaystyle\sum_{r=1}^{n}}
\left\Vert A_{mr}-U_{m}\rho_{m\varepsilon}\left(  \pi\left(  x_{r}\right)
\right)  U_{m}^{\ast}\right\Vert _{2},
\]
which implies there is an $m$ such that%
\[%
{\displaystyle\sum_{j=1}^{n}}
{\displaystyle\sum_{r=1}^{n}}
\left\Vert A_{mr}-U_{m}\rho_{m\varepsilon}\left(  \pi\left(  x_{r}\right)
\right)  U_{m}^{\ast}\right\Vert _{2}<\omega/4.
\]
This contradiction implies that the claim is true.

Now suppose $0<\varepsilon<\varepsilon_{0},$ $k\geq k_{0},$ and $N\geq N_{0}$.
Let $\mathcal{S}_{k}$ denote the set of canonical representations $\rho$ of
$\mathcal{M}_{d_{1}}\left(  \mathbb{C}\right)  \oplus\cdots\oplus
\mathcal{M}_{d_{s}}\left(  \mathbb{C}\right)  $ in $\mathcal{M}_{k}\left(
\mathbb{C}\right)  $ with $1-\tau_{k}\left(  \rho\left(  1\right)  \right)
<\omega/4$. As mentioned in the sentence before this theorem, the cardinality
of $\mathcal{S}_{k}$ is at most $k^{s}$. Suppose $\rho\in\mathcal{S}_{k}$. Let
$r_{\rho}=rank\left(  1-\rho\left(  1\right)  \right)  $. We have%
\[
r_{\rho}=k\left(  \tau_{k}\left(  1-\rho\left(  1\right)  \right)  \right)
<k\omega/4.
\]
We know that the commutant $\mathcal{C}$ of the range of $\rho$ is unitarily
equivalent to the algebra $\mathcal{M}_{m_{1}}\left(  \mathbb{C}\right)
^{\left(  d_{1}\right)  }\oplus\cdots\oplus\mathcal{M}_{m_{s}}\left(
\mathbb{C}\right)  ^{d_{s}}\oplus\mathcal{M}_{r_{\rho}}\left(  \mathbb{C}%
\right)  .$ It follows from a result of S. Szarek \cite{SS} (see \cite{DH} for
an equally useful, but more elementary result) that there is a constant $C$
(independent of $k$) and a set $\mathcal{W}_{\rho}$ of unitary matrices with%
\[
Card\left(  \mathcal{W}_{\rho}\right)  \leq\left(  \frac{C}{\omega}\right)
^{k^{2}-\left(  m_{1}^{2}+\cdots+m_{s}^{2}+r_{\rho}^{2}\right)  }%
\]
such that, for every unitary matrix $U$ there is a $W\in\mathcal{W}_{\rho}$
and a unitary $V$ in $\mathcal{C}^{\prime}$ such that $\left\Vert
U-WV\right\Vert <\omega/4$. This inequality implies
\[
\left\Vert U\rho\left(  y_{i}\right)  U^{\ast}-W\rho\left(  y_{i}\right)
W^{\ast}\right\Vert _{2}=\left\Vert U\rho\left(  y_{i}\right)  U^{\ast}%
-WV\rho\left(  y_{i}\right)  V^{\ast}W^{\ast}\right\Vert _{2}<2\left(
\omega/4\right)  .
\]
Hence, for every $\vec{A}\in\Gamma^{\text{top}}(x_{1},\ldots,x_{n}%
;k,\varepsilon,P_{1},\ldots,P_{N})$ there is a $\rho\in\mathcal{S}_{k}$ and a
$W\in\mathcal{W}_{\rho}$ such that%
\[
\left\Vert \vec{A}-W\rho\left(  \vec{y}\right)  W^{\ast}\right\Vert
_{2}<\omega.
\]
Hence%
\[
\nu_{2}\left(  \Gamma^{\text{top}}(x_{1},\ldots,x_{n};k,\varepsilon
,P_{1},\ldots,P_{N}),\omega\right)  \leq k^{s}\left(  \frac{C}{\omega}\right)
^{k^{2}-\left(  m_{1}^{2}+\cdots+m_{s}^{2}+r_{\rho}^{2}\right)  },
\]
which implies%
\[
\frac{\log\nu_{2}\left(  \Gamma^{\text{top}}(x_{1},\ldots,x_{n};k,\varepsilon
,P_{1},\ldots,P_{N}),\omega\right)  }{k^{2}}\leq
\]%
\[
s\frac{\log k}{k^{2}}+\left[  1-%
{\displaystyle\sum_{j=1}^{s}}
\left(  \frac{m_{j}}{k}\right)  ^{2}\right]  \left[  \log C-\log\omega\right]
.
\]
Since $%
{\displaystyle\sum_{j=1}^{s}}
\frac{m_{j}}{k}d_{j}=\tau_{k}\left(  \rho\left(  1\right)  \right)
\geq1-\omega/4,$ it follows from the Cauchy-Schwartz inequality that%
\[%
{\displaystyle\sum_{j=1}^{s}}
\left(  \frac{m_{j}}{k}\right)  ^{2}\geq\frac{%
{\displaystyle\sum_{j=1}^{s}}
\frac{m_{j}}{k}d_{j}}{%
{\displaystyle\sum_{j=1}^{s}}
d_{j}^{2}}\geq\frac{\left(  1-\omega/4\right)  ^{2}}{d}.
\]
It clearly follows from the definition of $\delta_{top}$ that
\[
\delta_{top}\left(  x_{1},\ldots,x_{n}\right)  \leq1-\frac{1}{d}.
\]

\end{proof}

A C*-algebra $\mathcal{A}$ is \emph{residually finite dimensional }%
(\emph{RFD}) if the finite-dimensional representations of $\mathcal{A}$
separate the points of $\mathcal{A}$. Every $RFD$ algebra is $MF$. Combining
the preceding result with Corollary \ref{pig}, we obtain the following
theorem. For the details of the simple proof see the proof in the next section
of a much stronger result (Corollary \ref{rfd2}).

\begin{theorem}
Suppose $\mathcal{A}$ is a nuclear MF C*-algebra and $\mathcal{A}%
/\mathcal{J}_{MF}\left(  \mathcal{A}\right)  $ is an $RFD$ algebra. Then, for
any generators $x_{1},\ldots,x_{n}$ of $\mathcal{A}$ , we have%
\[
\delta_{top}\left(  x_{1},\ldots,x_{n}\right)  =1-1/\dim\left(  \mathcal{A}%
/\mathcal{J}_{MF}\left(  \mathcal{A}\right)  \right)  .
\]

\end{theorem}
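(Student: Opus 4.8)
The plan is to reduce to the two extreme cases according to whether $\mathcal{B}:=\mathcal{A}/\mathcal{J}_{MF}\left(\mathcal{A}\right)$ is finite-dimensional, and to treat the upper and lower bounds separately. Write $q:\mathcal{A}\rightarrow\mathcal{B}$ for the quotient map and $y_{j}=q\left(x_{j}\right)$, so $\mathcal{B}=C^{\ast}\left(y_{1},\ldots,y_{n}\right)$; note that $\delta_{\text{top}}\left(x_{1},\ldots,x_{n}\right)$ is defined because $\mathcal{A}$ is $MF$. If $\dim\mathcal{B}=d<\infty$, the statement is exactly the preceding theorem and there is nothing more to do. So suppose $\dim\mathcal{B}=\infty$, in which case $1-1/\dim\mathcal{B}=1$ and it suffices to show $\delta_{\text{top}}\left(x_{1},\ldots,x_{n}\right)=1$.

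For the upper bound $\delta_{\text{top}}\left(x_{1},\ldots,x_{n}\right)\leq1$ I would simply quote nuclearity: since $\mathcal{A}$ is nuclear this is \cite{HLS} (or, in the language of the next section, $\mathcal{A}$ is nuclear and $MF$, hence $MF$-nuclear). For the lower bound I would combine Corollary \ref{pig} with the $RFD$ hypothesis, the crux being the following fact: \emph{an infinite-dimensional $RFD$ C*-algebra $\mathcal{B}$ admits finite-dimensional representations $\pi$ with $\dim\pi\left(\mathcal{B}\right)$ arbitrarily large.} To see this, suppose every finite-dimensional quotient of $\mathcal{B}$ had dimension at most a fixed $D$. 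The kernels of finite-dimensional representations form a family closed under finite intersection (a finite direct sum of finite-dimensional representations is one), along which $\dim\pi\left(\mathcal{B}\right)$ is nondecreasing (a smaller kernel gives a larger image) and bounded by $D$, hence attains a maximum at some $\pi_{0}$. For any other finite-dimensional $\pi$ one has $\ker\pi_{0}=\ker\left(\pi\oplus\pi_{0}\right)\subseteq\ker\pi$, so $\ker\pi_{0}$ is contained in the intersection of all such kernels, which is $\left\{0\right\}$ by $RFD$-ness; thus $\pi_{0}$ is faithful and $\dim\mathcal{B}=\dim\pi_{0}\left(\mathcal{B}\right)\leq D<\infty$, a contradiction.

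Granting this, fix $D$ and choose a finite-dimensional representation $\pi$ of $\mathcal{B}$ with $\dim\pi\left(\mathcal{B}\right)\geq D$. Then $\pi\circ q\in\text{Rep}\left(\mathcal{A}\right)$ and $C^{\ast}\left(\left(\pi\circ q\right)\left(x_{1}\right),\ldots,\left(\pi\circ q\right)\left(x_{n}\right)\right)=\pi\left(\mathcal{B}\right)$ is a finite-dimensional C*-algebra of dimension $\geq D$. Finite-dimensional C*-algebras are $MF$, so on such a tuple $\delta_{\text{top}}^{1/2}=\delta_{\text{top}}$, and by \cite{HS3} this common value equals $1-1/\dim\pi\left(\mathcal{B}\right)\geq1-1/D$. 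Corollary \ref{pig} then gives $\delta_{\text{top}}^{1/2}\left(x_{1},\ldots,x_{n}\right)\geq1-1/D$ for every $D$, hence $\delta_{\text{top}}^{1/2}\left(x_{1},\ldots,x_{n}\right)\geq1$; and since $\mathcal{A}$ is $MF$ this coincides with $\delta_{\text{top}}\left(x_{1},\ldots,x_{n}\right)$. Together with the upper bound we conclude $\delta_{\text{top}}\left(x_{1},\ldots,x_{n}\right)=1=1-1/\dim\mathcal{B}$.

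The only step that is more than bookkeeping is the displayed fact about $RFD$ algebras in the second paragraph; the rest is an assembly of the preceding theorem, Corollary \ref{pig}, the identity $\delta_{\text{top}}^{1/2}=\delta_{\text{top}}$ on $MF$-algebras, the finite-dimensional computation of \cite{HS3}, and the nuclear upper bound of \cite{HLS}. One should also, if the $x_{j}$ are not self-adjoint, first pass to their real and imaginary parts via the change-of-variables Theorem \ref{CV} so that all microstate spaces are literally the ones in the definitions; this is routine.
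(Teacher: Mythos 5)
Your proof is correct and follows essentially the route the paper intends: the finite-dimensional case is exactly the preceding theorem, the upper bound is the nuclear bound of \cite{HLS}, and the lower bound combines Corollary \ref{pig} with the $1-1/d$ formula of \cite{HS3} applied to finite-dimensional images of the RFD quotient composed with the quotient map. Your self-contained lemma that an infinite-dimensional RFD algebra has finite-dimensional quotients of arbitrarily large dimension is just a minor variant of the paper's appeal (via Corollary \ref{rfd2} and Theorem \ref{alt}) to infinitely many inequivalent irreducible finite-dimensional representations, whose direct sums serve the same purpose.
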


\bigskip

\section{MF-nuclear Algebras}

Recall that a C*-algebra is nuclear if $\pi\left(  \mathcal{A}\right)
^{\prime\prime}$ is hyperfinite for every representation $\pi:\mathcal{A}%
\rightarrow B\left(  M\right)  $ for some Hilbert space $M$. We will say that
$\mathcal{A}=C^{\ast}\left(  x_{1},\ldots,x_{n}\right)  $ is \emph{MF-nuclear}
if $\pi_{\tau}\left(  \mathcal{A}\right)  ^{\prime\prime}$ is hyperfinite for
every $\tau\in\mathcal{T}_{MF}\left(  \mathcal{A}\right)  $. Since every
MF-trace can be factored through $\mathcal{A}/\mathcal{J}_{MF}\left(
\mathcal{A}\right)  $, it follows that if $\mathcal{A}/\mathcal{J}_{MF}\left(
\mathcal{A}\right)  $ is nuclear, then $\mathcal{A}$ is MF-nuclear.

The following Theorem contains some properties of MF-nuclearity.

\bigskip

\begin{theorem}
Suppose $\mathcal{A}$ and $\mathcal{B}$ are C*-algebras. The following are true.

\begin{enumerate}
\item If $\mathcal{A}$ is MF-nuclear, $\mathcal{B}$ is MF and $\pi
:\mathcal{A}\rightarrow\mathcal{B}$ is a surjective $\ast$-homomorphism, then
$\mathcal{B}$ is MF-nuclear.

\item $\mathcal{A}\oplus\mathcal{B}$ is MF-nuclear if and only if
$\mathcal{A}$ and $\mathcal{B}$ are both MF-nuclear.

\item For each $n\in\mathbb{N}$, $\mathcal{A}$ is MF-nuclear if and only if
$\mathcal{M}_{n}\left(  \mathcal{A}\right)  =\mathcal{M}_{n}\left(
\mathbb{C}\right)  \otimes\mathcal{A}$ is MF-nuclear.

\item If $\mathcal{A}$ and $\mathcal{B}$ are MF-nuclear and $\mathcal{A}%
\otimes_{\gamma}\mathcal{B}$ is MF for some C*-crossnorm $\gamma$, then
$\mathcal{A}\otimes_{\gamma}\mathcal{B}$ is MF-nuclear.

\item If $\mathcal{A}$ and $\mathcal{B}$ are MF-nuclear and either
$\mathcal{A}$ or $\mathcal{B}$ is exact, then $\mathcal{A}\otimes_{\min
}\mathcal{B}$ is MF-nuclear.

\item A direct limit of MF-nuclear C*-algebras is MF-nuclear.
\end{enumerate}
\end{theorem}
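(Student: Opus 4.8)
The plan is to reduce each of the six assertions to two kinds of ingredient. The first is a \emph{transfer of MF-traces} between $\mathcal{A}$ (resp.\ the $\mathcal{A}_i$) and the algebra under consideration; here I rely on Proposition \ref{mft}, especially parts $(3)$--$(6)$, together with one elementary observation: if $\mathcal{Q}=\prod^{\alpha}\mathcal{M}_{k}(\mathbb{C})$ is a matricial ultraproduct and $P=\{P_{k}\}_{\alpha}$ is a projection in $\mathcal{Q}$ with $\tau_{\alpha}(P)=t>0$, then the corner $P\mathcal{Q}P$ is again a matricial ultraproduct $\prod^{\alpha}\mathcal{M}_{r_{k}}(\mathbb{C})$, where $r_{k}=\operatorname{rank}P_{k}$ (so $r_{k}\to\infty$ along $\alpha$), and its normalized trace is $t^{-1}\tau_{\alpha}|_{P\mathcal{Q}P}$. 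The second ingredient is a \emph{GNS compatibility fact}: for a unital inclusion $\mathcal{C}\subseteq\mathcal{D}$ and a tracial state $\tau$ on $\mathcal{D}$, the weak-operator closure $\pi_{\tau}(\mathcal{C})''$ inside $B(H_{\tau})$ is $\ast$-isomorphic, by a trace-preserving isomorphism, to $\pi_{\tau|_{\mathcal{C}}}(\mathcal{C})''$; this is because $\hat{\tau}$ is automatically faithful on $\pi_{\tau}(\mathcal{D})''$, so the cyclic vector $e$ is separating and the restriction map $\pi_{\tau}(\mathcal{C})''\to B(\overline{\pi_{\tau}(\mathcal{C})e})$ is an injective normal $\ast$-homomorphism. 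Finally I use the standard facts that hyperfiniteness is preserved under $\ast$-isomorphism, under $\mathcal{M}_{n}(\cdot)$ and countable direct sums, and that a $\|\cdot\|_{2}$-increasing union (with respect to a fixed faithful trace) of hyperfinite von Neumann subalgebras is hyperfinite.

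Parts $(1)$ and $(6)$ are quickest. For $(1)$, given $\tau\in\mathcal{T}_{MF}(\mathcal{B})$, Proposition \ref{mft}$(3)$ (applicable since $\pi(\mathcal{A})=\mathcal{B}$ is MF) gives $\tau\circ\pi\in\mathcal{T}_{MF}(\mathcal{A})$, so $\pi_{\tau\circ\pi}(\mathcal{A})''$ is hyperfinite by MF-nuclearity of $\mathcal{A}$; but $\pi_{\tau\circ\pi}=\pi_{\tau}\circ\pi$ and $\pi$ is onto, so $\pi_{\tau\circ\pi}(\mathcal{A})''=\pi_{\tau}(\mathcal{B})''$. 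For $(6)$, first note a direct limit of MF algebras is MF (see \cite{BK}); writing $\mathcal{A}=\varinjlim\mathcal{A}_{i}=\overline{\bigcup_{i}\mathcal{B}_{i}}$ with $\mathcal{B}_{i}$ the (unital) image of $\mathcal{A}_{i}$ in $\mathcal{A}$, each $\mathcal{B}_{i}$ is a C*-subalgebra of the MF algebra $\mathcal{A}$, hence MF, and a quotient of the MF-nuclear $\mathcal{A}_{i}$, hence MF-nuclear by $(1)$. For $\tau\in\mathcal{T}_{MF}(\mathcal{A})$, Proposition \ref{mft}$(5)$ gives $\tau|_{\mathcal{B}_{i}}\in\mathcal{T}_{MF}(\mathcal{B}_{i})$, so each $\pi_{\tau}(\mathcal{B}_{i})''\cong\pi_{\tau|_{\mathcal{B}_{i}}}(\mathcal{B}_{i})''$ is hyperfinite, and $\pi_{\tau}(\mathcal{A})''=\bigvee_{i}\pi_{\tau}(\mathcal{B}_{i})''$ is an increasing union of hyperfinite algebras.

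For $(3)$: $\mathcal{M}_{n}(\mathcal{A})$ is MF iff $\mathcal{A}$ is (it contains the unital copy $1_{n}\otimes\mathcal{A}$, and conversely tensoring an MF algebra with the nuclear $\mathcal{M}_{n}(\mathbb{C})$ stays MF); every tracial state on $\mathcal{M}_{n}(\mathcal{A})$ is $\tau_{n}\otimes\sigma$ for a unique tracial state $\sigma$ on $\mathcal{A}$, with $\sigma=(\tau_{n}\otimes\sigma)|_{1_{n}\otimes\mathcal{A}}$, so Proposition \ref{mft}$(5)$ gives $\tau_{n}\otimes\sigma$ MF $\Rightarrow\sigma$ MF, while Proposition \ref{mft}$(6)$ (using the exact finite-dimensional $\mathcal{M}_{n}(\mathbb{C})$) gives the converse; since $\pi_{\tau_{n}\otimes\sigma}(\mathcal{M}_{n}(\mathcal{A}))''=\mathcal{M}_{n}(\pi_{\sigma}(\mathcal{A})'')$ is hyperfinite iff $\pi_{\sigma}(\mathcal{A})''$ is, $(3)$ follows. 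For $(2)$: $\mathcal{A}\oplus\mathcal{B}$ is MF iff both summands are (direct summands are subalgebras; the direct sum embeds into a matricial quotient by padding blocks). The ``only if'' direction follows from $(1)$, since $\mathcal{A}$ and $\mathcal{B}$ are unital quotients of $\mathcal{A}\oplus\mathcal{B}$. For ``if'', take $\tau\in\mathcal{T}_{MF}(\mathcal{A}\oplus\mathcal{B})$ and write $\tau(a,b)=t\tau_{1}(a)+(1-t)\tau_{2}(b)$; represent $\tau=\tau_{\alpha}\circ\Pi$ via Proposition \ref{mft}$(4)$, restrict $\Pi$ to the corner cut down by the projection $\Pi(1_{\mathcal{A}},0)$, and use the corner-of-ultraproduct observation together with Proposition \ref{mft}$(4)$ again to conclude $\tau_{1}\in\mathcal{T}_{MF}(\mathcal{A})$ when $t>0$ (and symmetrically $\tau_{2}\in\mathcal{T}_{MF}(\mathcal{B})$ when $t<1$). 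Since $\pi_{\tau}(1_{\mathcal{A}},0)$ is a central projection in $\pi_{\tau}(\mathcal{A}\oplus\mathcal{B})''$, the latter is $\pi_{\tau_{1}}(\mathcal{A})''\oplus\pi_{\tau_{2}}(\mathcal{B})''$ (with the evident degeneration when $t\in\{0,1\}$), hence hyperfinite.

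Parts $(4)$ and $(5)$ are where the real work lies. Given $\tau\in\mathcal{T}_{MF}(\mathcal{A}\otimes_{\gamma}\mathcal{B})$, Proposition \ref{mft}$(5)$ applied to the unital subalgebras $\mathcal{A}\otimes 1$ and $1\otimes\mathcal{B}$ yields $\tau_{1}:=\tau|_{\mathcal{A}\otimes 1}\in\mathcal{T}_{MF}(\mathcal{A})$ and $\tau_{2}:=\tau|_{1\otimes\mathcal{B}}\in\mathcal{T}_{MF}(\mathcal{B})$; by MF-nuclearity and the GNS compatibility fact, the commuting subalgebras $P_{1}:=\pi_{\tau}(\mathcal{A}\otimes 1)''$ and $P_{2}:=\pi_{\tau}(1\otimes\mathcal{B})''$ of $M:=\pi_{\tau}(\mathcal{A}\otimes_{\gamma}\mathcal{B})''$ are hyperfinite, and they generate $M$. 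So $(4)$ reduces to the lemma: \emph{if a finite von Neumann algebra $M$ with a faithful normal trace is generated by two commuting hyperfinite von Neumann subalgebras $P_{1}$ and $P_{2}$, then $M$ is hyperfinite.} I expect this lemma to be the main obstacle. My intended proof is approximative: finite sums $\sum_{l}p_{l}q_{l}$ with $p_{l}\in P_{1}$, $q_{l}\in P_{2}$ are $\|\cdot\|_{2}$-dense in $M$; given a finite subset of such sums and $\varepsilon>0$, use hyperfiniteness of $P_{1}$ and $P_{2}$ to choose finite-dimensional $B_{1}\subseteq P_{1}$ and $B_{2}\subseteq P_{2}$ with $\|E_{B_{1}}(p_{l})-p_{l}\|_{2}$ and $\|E_{B_{2}}(q_{l})-q_{l}\|_{2}$ small; then $B:=\operatorname{span}(B_{1}B_{2})$ is a finite-dimensional subalgebra of $M$ (because $B_{1},B_{2}$ commute), $E_{B_{1}}(p_{l})E_{B_{2}}(q_{l})\in B$, and a Leibniz-type estimate bounds $\|E_{B}(\sum_{l}p_{l}q_{l})-\sum_{l}p_{l}q_{l}\|_{2}$ in terms of the previous quantities, exhibiting $M$ as approximately finite-dimensional, hence hyperfinite by Connes' theorem. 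Part $(5)$ is then the case $\gamma=\min$ of $(4)$, once we recall from \cite{HS4} that $\mathcal{A}\otimes_{\min}\mathcal{B}$ is MF when one of $\mathcal{A},\mathcal{B}$ is exact.
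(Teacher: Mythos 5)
Your proposal is correct and follows essentially the same route as the paper: restrict the given MF-trace to the relevant subalgebras via Proposition \ref{mft}(5), use faithfulness of $\hat{\tau}$ to identify $\pi_{\tau}(\mathcal{C})''$ with the GNS closure of the restricted trace, and conclude by showing that two commuting hyperfinite subalgebras generate a hyperfinite algebra; parts (1), (5), (6) are handled the same way as in the paper (the paper cites Proposition \ref{mft}(4) for (1), but the content is your pull-back argument). The one genuine divergence is your key lemma for (4): you prove it by a conditional-expectation/Leibniz estimate giving local $\|\cdot\|_2$-approximation by the finite-dimensional algebras $\operatorname{span}(B_1B_2)$ and then invoke Connes' theorem, whereas the paper gets it more cheaply by noting that the von Neumann algebra generated by two commuting finite-dimensional subalgebras is a quotient of their tensor product, hence finite-dimensional, so that taking increasing nets $B_1^{(i)}\nearrow P_1$, $B_2^{(j)}\nearrow P_2$ exhibits $M$ directly as a weakly dense increasing union of finite-dimensional subalgebras — no deep theorem required. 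Your estimate is fine (the needed density follows from Kaplansky, and the local $\|\cdot\|_2$-approximation characterization of hyperfiniteness is standard for separable preduals), it just uses heavier machinery; similarly, your corner-of-ultraproduct treatment of (2) and the trace-decomposition argument for (3) supply details the paper dismisses as obvious, and they are correct.
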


\begin{proof}
Statement $\left(  1\right)  $ follows from part $\left(  4\right)  $ of
Proposition \ref{mft} and statements $\left(  2\right)  $ and $\left(
3\right)  $ are obvious.

$\left(  4\right)  $. Suppose $\varphi$ is an MF-trace for $\mathcal{A}%
\otimes_{\gamma}\mathcal{B}$. Then the restriction $\varphi|\mathcal{A}\otimes
I\in\mathcal{T}_{MF}\left(  \mathcal{A}\otimes I\right)  $ and $\varphi
|I\otimes\mathcal{B}\in\mathcal{T}_{MF}\left(  I\otimes\mathcal{B}\right)  $.
Let $\left(  \pi,H,e\right)  $ be the GNS representation for $\varphi$. Then
$\hat{\varphi}:\pi\left(  \mathcal{A}\otimes_{\gamma}\mathcal{B}\right)
^{\prime\prime}\rightarrow\mathbb{C}$ defined by $\hat{\varphi}\left(
T\right)  =\left(  Te,e\right)  $ is a faithful trace. Since $\left(
\pi|\mathcal{A}\otimes I,\left[  \pi\left(  \mathcal{A}\otimes I\right)
e\right]  ^{-},e\right)  $ is the GNS construction for $\varphi|\mathcal{A}%
\otimes I$, and since $\mathcal{A}\otimes I$ is MF-nuclear, $\pi\left(
\mathcal{A}\otimes I\right)  ^{\prime\prime}|\left[  \pi\left(  \mathcal{A}%
\otimes I\right)  e\right]  ^{-}$ is a hyperfinite von Neumann algebra. Since
$\hat{\varphi}$ is faithful, it follows that the map $T\mapsto T|\left[
\pi\left(  \mathcal{A}\otimes I\right)  e\right]  ^{-}$ is a normal
isomorphism on $\pi\left(  \mathcal{A}\otimes I\right)  ^{\prime\prime}$;
whence, $\pi\left(  \mathcal{A}\otimes I\right)  ^{\prime\prime}$ is
hyperfinite. Similarly, $\pi\left(  I\otimes\mathcal{B}\right)  ^{\prime
\prime}$ is hyperfinite. However, each of the algebras $\pi\left(
\mathcal{A}\otimes I\right)  ^{\prime\prime}$ and $\pi\left(  I\otimes
\mathcal{B}\right)  ^{\prime\prime}$ are contained in the commutant of the
other. Thus $\pi\left(  \mathcal{A}\otimes_{\beta}\mathcal{B}\right)
^{\prime\prime}=\left[  \pi\left(  \mathcal{A}\otimes I\right)  ^{\prime
\prime}\cup\pi\left(  I\otimes\mathcal{B}\right)  ^{\prime\prime}\right]
^{\prime\prime}$ is hyperfinite, since the C*-algebra generated by two
commuting finite-dimensional (or nuclear) C*-algebras is a homomorphic image
of their tensor products. Hence $\mathcal{A}\otimes_{\gamma}\mathcal{B}$ is MF-nuclear.

$\left(  5\right)  $. This follows from $\left(  4\right)  $ and the fact
\cite[Proposition 3.1]{HS4} that the minimal tensor product of two MF-algebras
is MF if one of them is exact.

$\left(  6\right)  $. Suppose $\left\{  \mathcal{A}_{n}\right\}
_{n\in\mathbb{N}}$ is an increasing sequence of MF-algebras and $\mathcal{A}%
=\left[  \cup_{n\in\mathbb{N}}\mathcal{A}_{n}\right]  ^{-}$. We know from
\cite{BK} that $\mathcal{A}$ is MF. Suppose $\varphi\in\mathcal{T}_{MF}\left(
\mathcal{A}\right)  $ with GNS construction $\left(  \pi,H,e\right)  $. It
follows from part $\left(  5\right)  $ of Proposition \ref{mft} that
$\varphi|\mathcal{A}_{n}\in\mathcal{T}_{MF}\left(  \mathcal{A}_{n}\right)  $
for each $n\in\mathbb{N}$. Arguing as in the proof of part $\left(  4\right)
$, we see that $\pi\left(  \mathcal{A}_{n}\right)  ^{\prime\prime}$ is
hyperfinite for each $n\in\mathbb{N}$. Thus $\pi\left(  \mathcal{A}\right)
^{\prime\prime}=\left[  \cup_{n\in\mathbb{N}}\pi\left(  \mathcal{A}%
_{n}\right)  ^{\prime\prime}\right]  ^{-WOT}$ is hyperfinite. Hence
$\mathcal{A}$ is MF-nuclear.
\end{proof}

\bigskip

\bigskip

The importance of hyperfiniteness is due to the fact that, in the presence of
hyperfiniteness, $\delta_{0}$ can be computed using covering numbers of
unitary orbits. If $\vec{A}=\left(  A_{1},\ldots,A_{n}\right)  \in
\mathcal{M}_{k}^{n}\left(  \mathbb{C}\right)  ,$ we define the \emph{unitary
orbit} of $\vec{A}$ by%
\[
\mathcal{U}\left(  \vec{A}\right)  =\left\{  \left(  UA_{1}U,UA_{2}U^{\ast
},\ldots,UA_{n}U^{\ast}\right)  :U\in\mathcal{U}_{k}\right\}  .
\]
The following result is from \cite{DH}.

\begin{theorem}
\cite{DH} \label{hyperfinite} If $W^{\ast}\left(  x_{1},\ldots,x_{n}\right)  $
is hyperfinite with trace $\tau,$ and if there are sequences $\left\{
k_{s}\right\}  $ in $\mathbb{N}$ and $\vec{A}_{s}$ in $\mathcal{M}_{k_{s}}%
^{n}\left(  \mathbb{C}\right)  $ such that $\left(  \vec{A}_{s},\tau_{k_{s}%
}\right)  \overset{\mathrm{dist}}{\longrightarrow}\left(  \vec{x},\tau\right)
$ and $\left\{  \left\Vert \vec{A}_{s}\right\Vert \right\}  $ is bounded,
then
\[
\delta_{0}\left(  \vec{x};\tau\right)  =\underset{\omega\rightarrow0^{+}}%
{\lim\sup}\underset{s\rightarrow\infty}{\lim\sup}\frac{\log{\small \nu
}_{\infty}\left(  \mathcal{U}\left(  \vec{A}_{s}\right)  {\tiny ,\omega
}\right)  }{{\tiny -k}_{s}^{2}\log{\tiny \omega}}.
\]

\end{theorem}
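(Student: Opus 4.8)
The plan is to prove the two inequalities between $\delta_{0}(\vec{x};\tau)$ and the quantity $\Delta:=\limsup_{\omega\to0^{+}}\limsup_{s\to\infty}\frac{\log\nu_{\infty}(\mathcal{U}(\vec{A}_{s}),\omega)}{-k_{s}^{2}\log\omega}$ separately; the inequality $\delta_{0}(\vec{x};\tau)\ge\Delta$ is the easy direction. Fix $R$ with $\sup_{s}\Vert\vec{A}_{s}\Vert\le R$. Given $\varepsilon>0$ and $N\in\mathbb{N}$, the hypothesis $(\vec{A}_{s},\tau_{k_{s}})\overset{\mathrm{dist}}{\longrightarrow}(\vec{x},\tau)$ forces $\vec{A}_{s}\in\Gamma_{R}(\vec{x};k_{s},\varepsilon,N)$ for all large $s$; since $\tau_{k_{s}}$ and the operator norm are invariant under unitary conjugation, the whole orbit $\mathcal{U}(\vec{A}_{s})$ then lies in $\Gamma_{R}(\vec{x};k_{s},\varepsilon,N)$, so $\nu_{\infty}(\Gamma_{R}(\vec{x};k_{s},\varepsilon,N),\omega)\ge\nu_{\infty}(\mathcal{U}(\vec{A}_{s}),\omega)$. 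Because the $\limsup_{k\to\infty}$ in the definition of $\delta_{0}$ dominates the $\limsup$ along the subsequence $\{k_{s}\}$ and the resulting bound does not involve $\varepsilon,N$, taking $\inf_{N,\varepsilon}$ and then $\limsup_{\omega\to0^{+}}$ gives $\delta_{0}(\vec{x};\tau)\ge\Delta$.

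For the reverse inequality the content is a concentration statement: every microstate is, up to unitary conjugation and a small $\Vert\cdot\Vert_{2}$-perturbation, a block amplification of one of the $\vec{A}_{s}$. We may assume $k_{s}\to\infty$ (this is automatic unless $\mathcal{M}=W^{\ast}(\vec{x},\tau)$ is finite dimensional, in which case the argument below applies with $\vec{A}_{s}$ replaced by suitable amplifications of a single fixed microstate). \textbf{Claim.} For every $\omega>0$ there are $\varepsilon>0$ and $N\in\mathbb{N}$ such that for all large $k$, putting $s=s(k):=\max\{s':k_{s'}\le\sqrt{k}\}$, $k=q\,k_{s}+r$ with $0\le r<k_{s}$, and $\widehat{\vec{A}}_{s}:=\vec{A}_{s}^{(q)}\oplus0_{r}\in\mathcal{M}_{k}^{n}(\mathbb{C})$ (with $\vec{A}_{s}^{(q)}$ the $q$-fold inflation $\vec{A}_{s}\oplus\cdots\oplus\vec{A}_{s}$ and $0_{r}$ the zero $n$-tuple in $\mathcal{M}_{r}(\mathbb{C})$), every $\vec{B}\in\Gamma_{R}(\vec{x};k,\varepsilon,N)$ satisfies $\Vert\vec{B}-U\widehat{\vec{A}}_{s}U^{\ast}\Vert_{2}<\omega$ for some $U\in\mathcal{U}_{k}$. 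To prove the Claim, assume it fails for some $\omega$; taking $\varepsilon=1/m$, $N=m$ yields $k_{m}\uparrow\infty$ and $\vec{B}_{m}\in\Gamma_{R}(\vec{x};k_{m},1/m,m)$ with $\Vert\vec{B}_{m}-U\widehat{\vec{A}}_{s(k_{m})}U^{\ast}\Vert_{2}\ge\omega$ for all $U\in\mathcal{U}_{k_{m}}$. Pass to the tracial ultraproduct $(\mathcal{N},\sigma)=\prod^{\alpha}(\mathcal{M}_{k_{m}}(\mathbb{C}),\tau_{k_{m}})$ (a $\mathrm{II}_{1}$ factor) along a free ultrafilter $\alpha$. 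The $\tau_{k_{m}}$-distribution of $\vec{B}_{m}$ converges to the $\tau$-distribution of $\vec{x}$, and so does that of $\widehat{\vec{A}}_{s(k_{m})}$ — its two component weights $q\,k_{s(k_{m})}/k_{m}$ and $r/k_{m}$ tend to $1$ and $0$, while $s(k_{m})\to\infty$ — so Proposition~\ref{moments} yields trace-preserving normal $\ast$-isomorphisms of $\mathcal{M}$ onto the von Neumann subalgebras of $\mathcal{N}$ generated by $\{B_{mj}\}_{\alpha}$ and by $\{\widehat{A}_{s(k_{m})j}\}_{\alpha}$, carrying $x_{j}$ to $\{B_{mj}\}_{\alpha}$ resp.\ $\{\widehat{A}_{s(k_{m})j}\}_{\alpha}$. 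As $\mathcal{M}$ is hyperfinite, these two trace-preserving embeddings of $\mathcal{M}$ into $\mathcal{N}$ are conjugate by a unitary $U=\{U_{m}\}_{\alpha}$ of $\mathcal{N}$; this uniqueness is the infinite-dimensional counterpart of the finite-dimensional fact from \cite{DH1} used in Section~3, and is the ingredient taken from \cite{DH}. Then $\lim_{m\to\alpha}\Vert\vec{B}_{m}-U_{m}\widehat{\vec{A}}_{s(k_{m})}U_{m}^{\ast}\Vert_{2}=0$, so the inequality is violated for a set of $m$ belonging to $\alpha$ — a contradiction.

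Granting the Claim, I finish by counting (working with $\nu_{2}$, legitimate by Proposition~\ref{Szarek}). With $\varepsilon,N$ as in the Claim, $\Gamma_{R}(\vec{x};k,\varepsilon,N)$ lies in the $\omega$-$\Vert\cdot\Vert_{2}$-neighbourhood of the single orbit $\mathcal{U}(\widehat{\vec{A}}_{s(k)})$, whence $\nu_{2}(\Gamma_{R}(\vec{x};k,\varepsilon,N),4\omega)\le\nu_{2}(\mathcal{U}(\widehat{\vec{A}}_{s(k)}),\omega)$. A conjugate of $\widehat{\vec{A}}_{s(k)}=\vec{A}_{s(k)}^{(q)}\oplus0_{r}$ is determined, up to a small perturbation, by its rank-$qk_{s(k)}$ support projection — a point of a Grassmannian of real dimension $2qk_{s(k)}r\le2k^{3/2}$, covered by $\exp(o(k^{2}\log\tfrac1\omega)+o(k^{2}))$ balls — together with a conjugate of $\vec{A}_{s(k)}^{(q)}$ inside the corresponding corner; and by the Szarek-type orbit covering estimates of \cite{SS},\cite{DH} applied cluster-by-cluster to the joint eigenvalue structure, $q$-fold inflation multiplies the orbit covering content by $q^{2}$ up to an $\omega$-independent error, i.e.\ $\log\nu_{2}(\mathcal{U}(\vec{A}_{s(k)}^{(q)}),\omega)\le q^{2}\log\nu_{2}(\mathcal{U}(\vec{A}_{s(k)}),\omega)+O(k^{2})$. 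Combining, $\log\nu_{2}(\Gamma_{R}(\vec{x};k,\varepsilon,N),4\omega)\le q^{2}\log\nu_{2}(\mathcal{U}(\vec{A}_{s(k)}),\omega)+o(k^{2}\log\tfrac1\omega)+O(k^{2})$, with the $O(k^{2})$ independent of $\omega$. Dividing by $-k^{2}\log\omega$: since $q^{2}k_{s(k)}^{2}/k^{2}\to1$ and $s(k)\to\infty$ as $k\to\infty$, the right side is at most $\tfrac{\log\nu_{2}(\mathcal{U}(\vec{A}_{s(k)}),\omega)}{-k_{s(k)}^{2}\log\omega}+o(1)+O\!\left(\tfrac{1}{-\log\omega}\right)$; taking $\limsup_{k\to\infty}$, then $\inf_{N,\varepsilon}$ (the pair from the Claim suffices), then $\limsup_{\omega\to0^{+}}$ (which kills the $O(1/{-\log\omega})$-term and the factor $4$ before $\omega$) and using Proposition~\ref{Szarek} once more to replace $\nu_{2}$ by $\nu_{\infty}$ throughout, yields $\delta_{0}(\vec{x};\tau)\le\Delta$. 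This completes the proof.

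The main obstacle is the Claim, and within it the assertion that any two trace-preserving embeddings of the hyperfinite algebra $\mathcal{M}$ into the ultraproduct $\mathrm{II}_{1}$ factor $\mathcal{N}$ are conjugate by a unitary of $\mathcal{N}$ — this is precisely where hyperfiniteness enters and is the nontrivial fact borrowed from \cite{DH} (with \cite{DH1} its finite-dimensional precedent). The secondary technical point is the dimension bookkeeping for amplified and padded orbits via Szarek's covering estimates; the remaining steps — the ultraproduct passage, the monotonicity of covering numbers under enlarging the balls, and the $\nu_{2}$–$\nu_{\infty}$ comparison of Proposition~\ref{Szarek} — are routine.
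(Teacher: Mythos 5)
There is nothing to compare against inside the paper: Theorem \ref{hyperfinite} is imported from \cite{DH} and no proof is given here, so your attempt can only be judged on its own terms. On those terms, your architecture is reasonable and in fact mirrors the ultraproduct template this paper itself uses (e.g.\ in Theorem \ref{MFnuclear}): the containment $\mathcal{U}(\vec{A}_{s})\subseteq\Gamma_{R}(\vec{x};k_{s},\varepsilon,N)$ gives the lower bound (modulo the small point that the $A_{js}$ are not assumed selfadjoint, so you should pass to their real parts before placing them in the selfadjoint microstate space), and you correctly identify where hyperfiniteness enters: the uniqueness, up to a unitary of the tracial ultraproduct $\prod^{\alpha}\left(\mathcal{M}_{k_{m}}\left(\mathbb{C}\right),\tau_{k_{m}}\right)$, of trace-preserving embeddings of $W^{\ast}\left(\vec{x}\right)$, which is exactly the approximate-equivalence ingredient of \cite{DH1}. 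The padding-and-inflation device $\widehat{\vec{A}}_{s(k)}=\vec{A}_{s(k)}^{(q)}\oplus 0_{r}$ with $k_{s(k)}\leq\sqrt{k}$ is also the right way to pass from the subsequence $\left\{k_{s}\right\}$ to all $k$ (though your one-line dismissal of the case where $\left\{k_{s}\right\}$ is bounded, i.e.\ $W^{\ast}\left(\vec{x}\right)$ finite dimensional, is itself only a sketch, since $s(k)$ is then undefined).

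The genuine gap is the inflation estimate you state as ``$q$-fold inflation multiplies the orbit covering content by $q^{2}$ up to an $\omega$-independent error,'' i.e.\ $\log\nu_{2}\left(\mathcal{U}\left(\vec{A}^{(q)}\right),\omega\right)\leq q^{2}\log\nu_{2}\left(\mathcal{U}\left(\vec{A}\right),\omega\right)+O\!\left(k^{2}\right)$. This does not follow from anything you cite, and ``applied cluster-by-cluster to the joint eigenvalue structure'' has no meaning for a noncommuting $n$-tuple; the relevant object is the stabilizer of the orbit, namely the unitary group of the commutant $\mathcal{C}=\left\{\vec{A}\right\}^{\prime}$, whose real dimension is multiplied by exactly $q^{2}$ under inflation because $\left\{\vec{A}^{(q)}\right\}^{\prime}\cong\mathcal{C}\otimes\mathcal{M}_{q}\left(\mathbb{C}\right)$. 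Even granting that, your inequality needs a two-sided metric-entropy argument: Szarek's upper bound for the homogeneous space $\mathcal{U}_{qk_{s}}/\mathcal{U}\left(\mathcal{C}\otimes\mathcal{M}_{q}\left(\mathbb{C}\right)\right)$ \emph{and} a matching lower bound for $\nu_{2}\left(\mathcal{U}\left(\vec{A}\right),\omega\right)$ at the same scale; the lower bound is the delicate half and is false at face value when the orbit has small diameter (if $\vec{A}$ is nearly scalar then $\nu_{2}\left(\mathcal{U}\left(\vec{A}\right),\omega\right)=1$), so one must first show the diameters of $\mathcal{U}\left(\vec{A}\right)$ and $\mathcal{U}\left(\vec{A}^{(q)}\right)$ are comparable (both controlled by the $\left\Vert\cdot\right\Vert_{2}$-distance of $\vec{A}$ to scalar tuples, which is inflation-invariant) and then apply the entropy bounds at scale $\min\left(\omega,\mathrm{diam}\right)$. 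Without this, the $\omega$-independence of your $O\!\left(k^{2}\right)$ error—precisely what allows $\limsup_{\omega\rightarrow0^{+}}$ to kill it—is unjustified, and the upper bound $\delta_{0}\left(\vec{x};\tau\right)\leq\Delta$ is not established. The remaining bookkeeping (recentering nets to pass from a neighborhood of an orbit to a $4\omega$-covering, the Grassmannian factor of dimension $2qk_{s}r=o\!\left(k^{2}\right)$, and the $\nu_{2}$ versus $\nu_{\infty}$ comparison via Proposition \ref{Szarek}) is fine.
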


\bigskip

\begin{theorem}
If $\mathcal{A}=C^{\ast}\left(  x_{1},\ldots,x_{n}\right)  $ is MF, $\tau
\in\mathcal{T}_{MF}\left(  \mathcal{A}\right)  $, and $\pi_{\tau}\left(
\mathcal{A}\right)  ^{\prime\prime}$ is hyperfinite, then%
\[
\delta_{\text{\textrm{top}}}(x_{1},\ldots,x_{n})\geq\delta_{0}\left(
x_{1},\ldots,x_{n};\tau\right)  .
\]
\bigskip
\end{theorem}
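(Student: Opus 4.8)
The plan is to reduce the comparison between $\delta_{\text{top}}$ and $\delta_0(\cdot;\tau)$ to a comparison between the norm-microstates spaces $\Gamma^{\text{top}}$ and the covering numbers of a single unitary orbit, using Theorem~\ref{hyperfinite}. First, since $\tau\in\mathcal{T}_{MF}(\mathcal{A})$, there is a sequence $\{m_k\}$ and matrix tuples $\vec A_k\in\mathcal{M}_{m_k}^n(\mathbb{C})$ with $\vec A_k\overset{t.d.}{\longrightarrow}\vec x$ and $(\vec A_k,\tau_{m_k})\overset{\mathrm{dist}}{\longrightarrow}(\vec x,\tau)$. Because $W^*(x_1,\dots,x_n)=\pi_\tau(\mathcal{A})''$ is hyperfinite and the sequence $\{\|\vec A_k\|\}$ is bounded (topological convergence of norms forces this), Theorem~\ref{hyperfinite} applies and gives
\[
\delta_0(x_1,\dots,x_n;\tau)=\limsup_{\omega\to0^+}\limsup_{k\to\infty}\frac{\log\nu_\infty(\mathcal{U}(\vec A_k),\omega)}{-m_k^2\log\omega}.
\]
So it suffices to show that for each $\omega>0$ and each $r,N$, and for $k$ large, the orbit $\mathcal{U}(\vec A_k)$ is contained in $\Gamma^{\text{top}}(x_1,\dots,x_n;k,\varepsilon,P_1,\dots,P_r)$ (after passing to the appropriate self-adjoint picture, or using the $N,\varepsilon,k$ version of the microstates space). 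Indeed, norms of $*$-polynomials are unitarily invariant, so $\|P_j(U\vec A_k U^*)\|=\|P_j(\vec A_k)\|$, and $\vec A_k\overset{t.d.}{\longrightarrow}\vec x$ gives $|\,\|P_j(\vec A_k)\|-\|P_j(\vec x)\|\,|<\varepsilon$ for $k$ large; hence the whole orbit $\mathcal{U}(\vec A_k)$ sits inside the topological microstates space.

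Next, the containment $\mathcal{U}(\vec A_k)\subseteq\Gamma^{\text{top}}(\dots;k,\varepsilon,P_1,\dots,P_r)$ immediately yields $\nu_\infty(\mathcal{U}(\vec A_k),\omega)\leq\nu_\infty(\Gamma^{\text{top}}(\dots),\omega)$. Dividing by $-m_k^2\log\omega$, taking $\limsup_{k\to\infty}$ and then $\limsup_{\omega\to0^+}$, and comparing with the definition of $\delta_{\text{top}}$ (which also involves an $\inf$ over $r,\varepsilon$), we obtain
\[
\delta_0(x_1,\dots,x_n;\tau)\leq\inf_{r,\varepsilon}\limsup_{k\to\infty}\frac{\log\nu_\infty(\Gamma^{\text{top}}(\dots;k,\varepsilon,P_1,\dots,P_r),\omega)}{-m_k^2\log\omega}
\]
for each $\omega$; the right-hand side, after $\limsup_{\omega\to0^+}$, is exactly $\delta_{\text{top}}(x_1,\dots,x_n)$. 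One technical point: the free-entropy-dimension $\limsup$ runs over $k\to\infty$ through $\mathbb{N}$, whereas here $k$ ranges over the subsequence $\{m_k\}$; since a $\limsup$ over a subsequence is no larger than the full $\limsup$, this only helps us (it makes the left side potentially smaller, so the inequality is preserved). Another point is that Szarek's result (Proposition~\ref{Szarek}) lets us freely move between $\nu_\infty$ and $\nu_2$, so whichever normalization is most convenient can be used on both sides.

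The step I expect to require the most care is verifying the hypotheses of Theorem~\ref{hyperfinite} cleanly — namely that the microstates $\vec A_k$ coming from the MF-trace are simultaneously a topological-distribution approximation (so their orbits land in $\Gamma^{\text{top}}$) and a $\mathrm{dist}$-approximation with bounded norms (so Theorem~\ref{hyperfinite} computes $\delta_0(\cdot;\tau)$ from their orbits). The definition of MF-trace supplies exactly both conditions with the same sequence, so this is really just a matter of invoking the definition, but one must be attentive that the self-adjoint vs. general matrix conventions and the two parametrizations of $\Gamma^{\text{top}}$ (by $(k,\varepsilon,P_1,\dots,P_r)$ versus $(N,\varepsilon,k)$) line up; the equivalence of these parametrizations for $\delta_{\text{top}}$ was recorded in Section~2 (from \cite{HS2}), so no genuine obstacle arises there.
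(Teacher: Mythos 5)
Your proposal is correct and follows essentially the same route as the paper: pick the single sequence of matrix microstates furnished by the definition of an MF-trace, observe that for large $k$ the whole unitary orbit $\mathcal{U}(\vec A_k)$ lies in $\Gamma^{\text{top}}(\vec x;N,\varepsilon,m_k)$ by unitary invariance of polynomial norms, and then invoke Theorem~\ref{hyperfinite} to identify $\delta_0(x_1,\ldots,x_n;\tau)$ with the orbit covering-number expression, which the containment bounds by $\delta_{\text{top}}$. The technical points you flag (boundedness of $\{\|\vec A_k\|\}$, the selfadjoint picture, and the subsequence issue in $k$) are handled implicitly in the paper in exactly the same way.
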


\begin{proof}
Choose sequences $\left\{  k_{s}\right\}  $ in $\mathbb{N}$ and $\vec{A}_{s}$
in $\mathcal{M}_{k_{s}}^{n}\left(  \mathbb{C}\right)  $such that $\left(
\vec{A}_{s},\tau_{k_{s}}\right)  \overset{\mathrm{dist}}{\longrightarrow
}\left(  \vec{x},\tau\right)  $ and $\vec{A}_{s}\overset{t.d.}{\longrightarrow
}\vec{x}.$ Then, for $\varepsilon>0,$ $N\in\mathbb{N}$, there is an $s_{0}%
\in\mathbb{N}$ such that, for all $s\geq s_{0}$,\bigskip%
\[
\vec{A}_{s}\in\Gamma^{\text{top}}\left(  \vec{x};N,\varepsilon,k_{s}\right)
\text{.}%
\]
Hence, for every $s\geq s_{0}$,
\[
\mathcal{U}\left(  \vec{A}_{s}\right)  \subseteq\Gamma^{\text{top}}\left(
\vec{x};N,\varepsilon,k_{s}\right)  .
\]
It now follows from Theorem \ref{hyperfinite} and the definition of
$\delta_{\text{\textrm{top}}}(x_{1},\ldots,x_{n})$ that%
\[
\delta_{\text{\textrm{top}}}(x_{1},\ldots,x_{n})\geq\delta_{0}\left(
x_{1},\ldots,x_{n};\tau\right)  .
\]

\end{proof}

\bigskip

\begin{corollary}
If $\mathcal{A}=C^{\ast}\left(  x_{1},\ldots,x_{n}\right)  $ is MF-nuclear,
then
\[
\delta_{\text{\textrm{top}}}(x_{1},\ldots,x_{n})\geq\sup_{\tau\in
\mathcal{T}_{MF}\left(  \mathcal{A}\right)  }\delta_{0}\left(  x_{1}%
,\ldots,x_{n};\tau\right)  .
\]

\end{corollary}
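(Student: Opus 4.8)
The plan is to derive this directly from the preceding Theorem together with the definition of MF-nuclearity. The key observation is that if $\mathcal{A} = C^{\ast}(x_1,\ldots,x_n)$ is MF-nuclear, then by definition $\pi_\tau(\mathcal{A})^{\prime\prime}$ is hyperfinite for \emph{every} $\tau \in \mathcal{T}_{MF}(\mathcal{A})$. Thus, for each fixed $\tau \in \mathcal{T}_{MF}(\mathcal{A})$, the hypotheses of the previous Theorem are satisfied: $\mathcal{A}$ is MF, $\tau$ is an MF-trace, and $\pi_\tau(\mathcal{A})^{\prime\prime}$ is hyperfinite. Applying that Theorem gives
\[
\delta_{\text{\textrm{top}}}(x_1,\ldots,x_n) \geq \delta_0(x_1,\ldots,x_n;\tau).
\]

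Since this inequality holds for every single $\tau \in \mathcal{T}_{MF}(\mathcal{A})$, and the left-hand side $\delta_{\text{\textrm{top}}}(x_1,\ldots,x_n)$ does not depend on $\tau$, I would then simply take the supremum over all $\tau \in \mathcal{T}_{MF}(\mathcal{A})$ on the right-hand side. This yields
\[
\delta_{\text{\textrm{top}}}(x_1,\ldots,x_n) \geq \sup_{\tau \in \mathcal{T}_{MF}(\mathcal{A})} \delta_0(x_1,\ldots,x_n;\tau),
\]
which is exactly the claimed statement. One should note that $\mathcal{T}_{MF}(\mathcal{A})$ is nonempty by part (1) of Proposition~\ref{mft}, so the supremum is over a nonempty set and is well-defined (and the inequality is nonvacuous).

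There is essentially no obstacle here: the corollary is a routine consequence of the preceding Theorem once one unwinds the definition of MF-nuclearity. The only point requiring any care is the implicit requirement in the previous Theorem that the relevant microstate sequence $\vec{A}_s$ be \emph{bounded} in norm and converge both in topological distribution and in distribution to $(\vec{x},\tau)$; but this is guaranteed by the definition of an MF-trace (one can take $\left\Vert \vec{A}_s \right\Vert$ bounded, e.g.\ by $1 + \left\Vert \vec{x}\right\Vert$, via the change-of-variables reduction, or simply observe that convergence of $\left\Vert \vec{A}_s \right\Vert = \left\Vert p(\vec{A}_s)\right\Vert$ for the coordinate polynomials already forces boundedness). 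Thus the proof is a two-line argument: apply the previous Theorem for each $\tau$, then pass to the supremum.
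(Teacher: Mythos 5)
Your proposal is correct and is exactly the argument the paper intends: the corollary is stated without proof precisely because it follows immediately by applying the preceding theorem to each $\tau\in\mathcal{T}_{MF}\left(\mathcal{A}\right)$ (whose hypotheses hold for every MF-trace by the definition of MF-nuclearity) and then taking the supremum over $\tau$. Your added remarks on nonemptiness of $\mathcal{T}_{MF}\left(\mathcal{A}\right)$ and boundedness of the microstate sequences are fine but not needed beyond what the theorem already handles.
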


\bigskip

In \cite{HLS} it was proved that if $\mathcal{A}=C^{\ast}\left(  x_{1}%
,\ldots,x_{n}\right)  $ is MF and nuclear, then $\delta_{\text{\textrm{top}}%
}(x_{1},\ldots,x_{n})\leq1$. We can actually prove this remains true when
$\mathcal{A}$\textbf{ }is MF-nuclear.

\begin{theorem}
\label{MFnuclear}If $\mathcal{A}=C^{\ast}\left(  x_{1},\ldots,x_{n}\right)  $
is MF-nuclear, then $\delta_{\text{\textrm{top}}}(x_{1},\ldots,x_{n})\leq1$.
\end{theorem}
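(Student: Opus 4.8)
The plan is to reduce the general MF-nuclear case to the known nuclear case from \cite{HLS} by passing to the quotient $\mathcal{A}/\mathcal{J}_{MF}\left(  \mathcal{A}\right)  $, but since that quotient need not be MF, I would instead work directly with the microstate spaces and exploit hyperfiniteness of all the GNS von Neumann algebras $\pi_{\tau}\left(  \mathcal{A}\right)  ^{\prime\prime}$ for $\tau\in\mathcal{T}_{MF}\left(  \mathcal{A}\right)  $. By the change-of-variables theorem (Theorem \ref{CV}) I may first normalize so that $\left\Vert x_{j}\right\Vert \leq1$ and append the unit $1$ to the generating tuple without decreasing $\delta_{\text{top}}$; this makes every microstate tuple $\vec{A}$ lie in a fixed bounded set and forces $A_{1}\approx I$.

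First I would fix $\omega>0$ and argue by contradiction: if $\delta_{\text{top}}\left(  x_{1},\ldots,x_{n}\right)  >1$, then along some sequence $\varepsilon_{m}\to0$, $N_{m}\to\infty$, $k_{m}\to\infty$ one can find microstates $\vec{A}_{m}\in\Gamma^{\text{top}}\left(  \vec{x};N_{m},\varepsilon_{m},k_{m}\right)  $ whose sets require more than $\left(  1/\omega\right)  ^{(1+\eta)k_{m}^{2}}$ many $\omega$-$\left\Vert \cdot\right\Vert _{2}$-balls (using Szarek's proposition to switch to the $2$-norm covering number). Passing to a subsequence along a free ultrafilter $\alpha$ and forming the tracial ultraproduct $\left(  \mathcal{N},\sigma\right)  =\prod^{\alpha}\left(  \mathcal{M}_{k_{m}}\left(  \mathbb{C}\right)  ,\tau_{k_{m}}\right)  $, the tuple $\vec{y}=\left\{  \vec{A}_{m}\right\}  _{\alpha}$ generates a copy of a quotient of $\mathcal{A}$ inside $\mathcal{N}$, and the induced trace $\tau$ is an MF-trace on $\mathcal{A}$ by part (4) of Proposition \ref{mft}. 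MF-nuclearity then gives that $\pi_{\tau}\left(  \mathcal{A}\right)  ^{\prime\prime}$, hence $W^{\ast}\left(  \vec{y}\right)  \subseteq\mathcal{N}$, is hyperfinite.

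The key step is then a covering estimate for the microstate space in the $\left\Vert \cdot\right\Vert _{2}$-norm: hyperfiniteness of $W^{\ast}\left(  \vec{y}\right)  $ means that for large $m$ the matricial microstates $\vec{A}_{m}$ can be approximated in $\left\Vert \cdot\right\Vert _{2}$ by block-diagonal tuples coming from a fixed finite-dimensional model, up to a unitary conjugation. As in the proof of the previous theorem (the canonical-representation / Szarek counting argument) and as in \cite{HLS}, the number of such block-diagonal models is subexponential in $k_{m}^{2}$, while the unitary orbits contribute at most $\left(  C/\omega\right)  ^{k_{m}^{2}}$ balls; hence $\nu_{2}\left(  \Gamma^{\text{top}},\omega\right)  \leq\left(  C/\omega\right)  ^{k_{m}^{2}+o\left(  k_{m}^{2}\right)  }$, contradicting the supposed lower bound once $\omega$ is small. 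Combining over all $\omega$ yields $\delta_{\text{top}}\left(  x_{1},\ldots,x_{n}\right)  \leq1$.

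The main obstacle is making the hyperfiniteness-to-covering passage uniform in $m$: hyperfiniteness of the single ultraproduct algebra $W^{\ast}\left(  \vec{y}\right)  $ gives, for each tolerance, a \emph{single} finite-dimensional subalgebra that $\left\Vert \cdot\right\Vert _{2}$-approximates the generators, and this subalgebra must be pulled back to finite-dimensional subalgebras of the matrix algebras $\mathcal{M}_{k_{m}}\left(  \mathbb{C}\right)  $ for all $m$ in the ultrafilter simultaneously, with the dimensions of the summands controlled so that the Szarek count $\sum m_{j}^{2}$ is negligible relative to $k_{m}^{2}$ — precisely the mechanism used in the proof of the previous theorem and in \cite{HLS}, which I expect can be adapted with care. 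A secondary technical point is handling the fact that the approximating finite-dimensional algebra inside $\mathcal{N}$ need not be \emph{unital} in $\mathcal{M}_{k_{m}}\left(  \mathbb{C}\right)  $; the normalization $x_{1}=1$ together with $\tau_{k_{m}}\left(  \rho\left(  1\right)  \right)  \to1$ controls the leftover corner, exactly as in the claim of the preceding theorem.
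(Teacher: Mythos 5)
Your toolkit is the same as the paper's (ultraproduct of the matrix algebras along a microstate sequence, the induced trace is an MF-trace, MF-nuclearity gives hyperfiniteness, a single matrix model $\mathcal{M}_{d}\left(\mathbb{C}\right)$ pulled back to the levels $k_{m}$ by approximate equivalence, then a count of block-diagonal models times a net of the unitary group), but the logical structure as you set it up has a genuine gap. You negate $\delta_{\text{top}}>1$, pick \emph{one} representative microstate $\vec{A}_{m}$ in each set $\Gamma^{\text{top}}\left(\vec{x};N_{m},\varepsilon_{m},k_{m}\right)$, and conclude from hyperfiniteness of $W^{\ast}\left(\left\{\vec{A}_{m}\right\}_{\alpha}\right)$ that these particular $\vec{A}_{m}$ are $\left\Vert\cdot\right\Vert_{2}$-close to block-diagonal models. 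That statement concerns only the chosen sequence: it gives no control over the remaining points of $\Gamma^{\text{top}}$, and a lower bound on a covering number is a property of the whole set that is not witnessed by any single point. So no contradiction with $\nu_{2}\left(\Gamma^{\text{top}},\omega\right)>\left(1/\omega\right)^{\left(1+\eta\right)k_{m}^{2}}$ follows from what you have established. The paper avoids this by inverting the quantifiers: it defines $\Delta\left(\vec{A},k,\omega\right)$ as the least $d$ for which $\vec{A}$ is $\omega$-close in $\left\Vert\cdot\right\Vert_{2}$ to a unitary conjugate of a tuple from $\sigma_{d,k}\left(\mathcal{M}_{d}\left(\mathbb{C}\right)\right)$, and proves the uniform Claim that for some fixed $D$ and all small $\varepsilon$, large $N,k$, \emph{every} $\vec{A}\in\Gamma^{\text{top}}\left(\vec{x};N,\varepsilon,k\right)$ satisfies $\Delta\left(\vec{A},k,\omega\right)\leq D$; the contradiction argument (ultraproduct, MF-trace, hyperfiniteness, $\mathcal{N}$ a $II_{1}$ factor, unital $\eta:\mathcal{M}_{d}\left(\mathbb{C}\right)\rightarrow\mathcal{N}$ via \cite{KR}, unitaries $U_{m}$ via \cite{DH1}) is applied to microstates chosen to violate this Claim, i.e.\ with $\Delta\left(\vec{A}_{m},k_{m},\omega\right)\geq m$. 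Only after that uniform statement is in hand does the covering estimate for the full microstate space make sense.

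The obstacle you single out as the main difficulty is actually the easy part, and your proposed remedy for it points in the wrong direction. Since the approximating model in $\mathcal{N}$ can be taken to be a single full matrix algebra $\mathcal{M}_{d}\left(\mathbb{C}\right)$ with $d$ fixed (independent of $m$), the pullback to the levels $k_{m}$ is immediate from the uniqueness of the trace on $\mathcal{M}_{d}\left(\mathbb{C}\right)$ together with the approximate-equivalence result of \cite{DH1}; no control of "the Szarek count $\sum m_{j}^{2}$" is needed, because for the bound $\delta_{\text{top}}\leq1$ one does not use the commutant refinement at all: a net of the whole unitary group of cardinality at most $\left(9\pi e/\omega\right)^{k^{2}}$, times a net of constant cardinality (independent of $k$) in the ball of $\mathcal{M}_{d}\left(\mathbb{C}\right)^{n}$ with $d\leq D$, already gives $\nu_{2}\left(\Gamma^{\text{top}}\left(\vec{x};N,\varepsilon,k\right),\omega\right)\leq D\left(\frac{1}{3\omega}\right)^{2nD^{2}}\left(\frac{9\pi e}{\omega}\right)^{k^{2}}$, hence exponent $k^{2}$ and $\delta_{\text{top}}\leq1$. (The commutant counting is what buys the sharper $1-\frac{1}{d}$ in the preceding theorem, not what is needed here.) Likewise your secondary point is moot: one does not need to append $x_{1}=1$ or track $\tau_{k}\left(\rho\left(1\right)\right)\rightarrow1$; the strict normalization $\left\Vert x_{j}\right\Vert<1$ suffices, since the zero corner left over by $\sigma_{d,k}$ has size smaller than $d$ and is negligible. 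In short: right machinery, but you must reorganize the proof around the uniform approximation claim over the entire microstate space, and your identified "main obstacle" is not where the real work lies.
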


\begin{proof}
It follows from the change of variables theorem that we can assume that
$\left\Vert x_{j}\right\Vert <1$ for $1\leq j\leq n$. Suppose $k,d\in
\mathbb{N}$ and $d\leq k$. We can canonically (non-unitally) embed
$\mathcal{M}_{d}\left(  \mathbb{C}\right)  $ into $\mathcal{M}_{k}\left(
\mathbb{C}\right)  $ with a block-diagonal map $\sigma_{d,k}:\mathcal{M}%
_{d}\left(  \mathbb{C}\right)  \rightarrow\mathcal{M}_{k}\left(
\mathbb{C}\right)  $ so that
\[
\sigma_{d,k}\left(  A\right)  =\left(
\begin{array}
[c]{ccccc}%
A &  &  &  & \\
& A &  &  & \\
&  & \ddots &  & \\
&  &  & A & \\
&  &  &  & 0
\end{array}
\right)  .
\]
where the size of the $0$ matrix is smaller than $d\times d$. If $d>k,$ we
define $\sigma_{d,k}\left(  A\right)  =0$ for every $A.$

Suppose $\vec{A}=\left(  A_{1},\ldots,A_{n}\right)  \in\mathcal{M}_{k}\left(
\mathbb{C}\right)  ^{n}.$ Suppose $\omega>0$. We define $\Delta\left(  \vec
{A},k,\omega\right)  $ to be the smallest $d\in\mathbb{N}$ for which there
exist a unitary $U\in\mathcal{M}_{k}\left(  \mathbb{C}\right)  $ such that,
for some contractions $B_{1},\ldots,B_{n}\in U^{\ast}\sigma_{d,k}\left(
\mathcal{M}_{d}\left(  \mathbb{C}\right)  \right)  U$, we have%
\[%
{\displaystyle\sum_{j=1}^{n}}
\left\Vert A_{j}-B_{j}\right\Vert _{2}<\omega.
\]

\textbf{Claim:} There is an $\varepsilon_{0}>0,$ $k_{0},N_{0},D\in\mathbb{N}$
such that for every $0<\varepsilon<\varepsilon_{0},$ and every $k\geq k_{0}$
and $N\geq N_{0}$ and every $\vec{A}\in\Gamma^{\text{\textrm{top}}}\left(
x_{1},\ldots,x_{n};N,\varepsilon,k\right)  ,$ we have $\Delta\left(  \vec
{A},k,\omega\right)  \leq D.$

\textbf{Proof of Claim:} Assume via contradiction that the claim is false.
Then for each positive integer $m$ there is a $k_{m}\geq m$ and $\vec{A}%
_{m}=\left(  A_{m1},\ldots,A_{mn}\right)  \in\Gamma^{\text{top}}\left(
x_{1},\ldots,x_{n};m,\frac{1}{m},k_{m}\right)  $ such that $\Delta\left(
\vec{A}_{m},k_{m},\omega\right)  \geq m$. Note that any subsequence of
$\left\{  \vec{A}_{m}\right\}  $ has the same properties, so we can assume
that there is a $\tau\in\mathcal{T}_{MF}\left(  \mathcal{A}\right)  $ such
that%
\[
\left(  \vec{A}_{m},\tau_{k_{m}}\right)  \overset{\mathrm{dist}}%
{\longrightarrow}\left(  \vec{x},\tau\right)  .
\]
We know from the definition of $\left\{  \vec{A}_{m}\right\}  $ that%
\[
\vec{A}_{m}\overset{\mathrm{t.d.}}{\longrightarrow}\vec{x}.
\]
We now let $\alpha$ be a free ultrafilter on $\mathbb{N}$, and we let $\left(
\mathcal{N},\rho\right)  $ be the tracial ultraproduct $%
{\displaystyle\prod\limits^{\alpha}}
\left(  \mathcal{M}_{k_{m}}\left(  \mathbb{C}\right)  ,\tau_{k_{m}}\right)  .$
Let $y_{j}=\left\{  A_{mj}\right\}  _{\alpha}\in\mathcal{N}$ for $1\leq j\leq
n$. It follows that, for every noncommutative polynomial $p$,
\[
\left\Vert p\left(  \vec{y}\right)  \right\Vert \leq\lim_{m\rightarrow\alpha
}\left\Vert p\left(  \vec{A}_{m}\right)  \right\Vert =\left\Vert p\left(
\vec{x}\right)  \right\Vert .
\]
Hence $\pi:\mathcal{A}\rightarrow\mathcal{N}$ defined by $\pi\left(  p\left(
\vec{x}\right)  \right)  =p\left(  \vec{y}\right)  $ is a unital $\ast
$-homomorphism. Moreover, we have
\[
\left\Vert \pi\left(  p\left(  \vec{x}\right)  \right)  \right\Vert _{2}%
^{2}=\left\Vert p\left(  \vec{y}\right)  \right\Vert _{2}^{2}=\lim
_{m\rightarrow\alpha}\left\Vert p\left(  \vec{A}_{m}\right)  \right\Vert
_{2}^{2}=\tau\left(  p\left(  \vec{x}\right)  ^{\ast}p\left(  \vec{x}\right)
\right)  .
\]
This gives a trace-preserving isomorphism between $\pi\left(  \mathcal{A}%
\right)  ^{\prime\prime}\subseteq\mathcal{N}$ and $\pi_{\tau}\left(
\mathcal{A}\right)  ^{\prime\prime}$ (where $\pi_{\tau}$ is the GNS
representation for $\tau$). Since $\mathcal{A}$ is MF-nuclear, $\pi_{\tau
}\left(  \mathcal{A}\right)  ^{\prime\prime}$ is hyperfinite. By \cite{S} (see
also \cite{HL}) $\mathcal{N}$ is a $II_{1}$ factor; thus, by \cite{KR}, there
is a $d\in\mathbb{N}$ and a unital $\ast$-homomorphism $\eta:\mathcal{M}%
_{d}\left(  \mathbb{C}\right)  \rightarrow\mathcal{N}$ and $w_{1},\ldots
,w_{n}\in\mathcal{M}_{d}\left(  \mathbb{C}\right)  $ such that
\[
\left\Vert w_{j}\right\Vert \leq\left\Vert y_{j}\right\Vert \leq\left\Vert
x_{j}\right\Vert <1\text{ for }1\leq j\leq n,\text{ and}%
\]%
\[%
{\displaystyle\sum_{j=1}^{n}}
\left\Vert y_{j}-\eta\left(  w_{j}\right)  \right\Vert _{2}<\omega.
\]
Since $\mathcal{M}_{d}\left(  \mathbb{C}\right)  $ has a unique tracial state,
we know $\tau_{d}=\rho\circ\eta$, so $\left(  \eta\left(  \vec{w}\right)
,\rho\right)  \overset{\mathrm{dist}}{=}\left(  \vec{w},\tau_{d}\right)  .$ We
can write $\eta\left(  w_{j}\right)  =\left\{  W_{mj}\right\}  _{\alpha}%
\in\mathcal{N}$ with each $\left\Vert W_{mj}\right\Vert \leq\left\Vert
w_{j}\right\Vert \leq1$. We then have%
\[%
{\displaystyle\sum_{j=1}^{n}}
\left\Vert y_{j}-\eta\left(  w_{j}\right)  \right\Vert _{2}=\lim
_{m\rightarrow\alpha}%
{\displaystyle\sum_{j=1}^{n}}
\left\Vert A_{mj}-W_{mj}\right\Vert _{2}<\omega.
\]
We know that
\[
\left(  \vec{W}_{m},\tau_{k_{m}}\right)  \overset{\mathrm{dist}}%
{\longrightarrow}\left(  \eta\left(  \vec{w}\right)  ,\rho\right)
\overset{\mathrm{dist}}{=}\left(  \vec{w},\tau_{d}\right)  .
\]
But we also know
\[
\left(  \left(  \sigma_{d,k_{m}}\left(  w_{1}\right)  ,\ldots,\sigma_{d,k_{m}%
}\left(  w_{n}\right)  \right)  ,\tau_{k_{m}}\right)  \overset{\mathrm{dist}%
}{\longrightarrow}\left(  \vec{w},\tau_{d}\right)  \overset{\mathrm{dist}}%
{=}\left(  \eta\left(  \vec{w}\right)  ,\rho\right)  .
\]
It follows from \cite{DH1} that, for each $m$, there is a unitary $U_{m}%
\in\mathcal{U}_{k_{m}}$ such that
\[
\lim_{m\rightarrow\alpha}%
{\displaystyle\sum_{j=1}^{n}}
\left\Vert W_{m_{j}}-U_{m}^{\ast}\sigma_{d,k_{m}}\left(  w_{j}\right)
U_{m}\right\Vert _{2}=0.
\]
Hence
\[
\lim_{m\rightarrow\alpha}%
{\displaystyle\sum_{j=1}^{n}}
\left\Vert A_{m_{j}}-U_{m}^{\ast}\sigma_{d,k_{m}}\left(  w_{j}\right)
U_{m}\right\Vert _{2}<\omega,
\]
which implies, eventually along $\alpha,$ that%
\[
m\leq\Delta\left(  \vec{A}_{m},k_{m},\omega\right)  \leq d.
\]
This contradiction implies the claim is true.

It follows from the claim, for $0<\varepsilon<\varepsilon_{0},$ $k\geq k_{0},$
$N\geq N_{0}$, that any covering of%
\[
\bigcup_{d=1}^{D}\left\{  \left(  U^{\ast}B_{1}U,\ldots,U^{\ast}B_{n}U\right)
:B_{1},\ldots,B_{n}\in\mathcal{M}_{d}\left(  \mathbb{C}\right)  ,U\text{
unitary}\right\}
\]
with $\omega$-$\left\Vert \cdot\right\Vert _{2}$-balls also covers
$\Gamma^{\text{\textrm{top}}}\left(  x_{1},\ldots,x_{n};N,\varepsilon
,k\right)  .$ We can choose an $\frac{\omega}{3}$-net $\mathcal{B}$ in
ball$\mathcal{M}_{d}\left(  \mathbb{C}\right)  ^{d}$ with Card$\left(
\mathcal{B}\right)  \leq\left(  \frac{1}{3\omega}\right)  ^{2nd^{2}}%
\leq\left(  \frac{1}{3\omega}\right)  ^{2nD^{2}},$ and we can choose an
$\frac{\omega}{3}$-net $\mathcal{V}$ in the set of unitary $k\times k$
matrices with Card$\left(  \mathcal{V}\right)  \leq\left(  \frac{9\pi
e}{\omega}\right)  ^{k^{2}}.$ Hence,
\[
\nu_{2}\left(  \Gamma^{\text{\textrm{top}}}\left(  x_{1},\ldots,x_{n}%
;N,\varepsilon,k\right)  ,\omega\right)  \leq D\left(  \frac{1}{3\omega
}\right)  ^{2nD^{2}}\left(  \frac{9\pi e}{\omega}\right)  ^{k^{2}}.
\]
Using the definition of $\delta_{\text{\textrm{top}}}$, we see that
\[
\delta_{\text{\textrm{top}}}\left(  x_{1},\ldots,x_{n}\right)  \leq1.
\]

\end{proof}

\bigskip

The ideas in the proof of Theorem \ref{MFnuclear} easily yield the following result.

\begin{corollary}
Suppose $\mathcal{A}=C^{\ast}\left(  x_{1},\ldots,x_{n}\right)  $ is an
MF-algebra and
\[
\left\{  x_{1},\ldots,x_{n}\right\}  =\cup_{j=1}^{m}E_{j},
\]
where $C^{\ast}\left(  E_{j}\right)  $ is MF-nuclear for $1\leq j\leq m$. Then
$\delta_{\text{\textrm{top}}}\left(  x_{1},\ldots,x_{n}\right)  \leq m$.
\end{corollary}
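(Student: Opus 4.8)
The plan is to run the contradiction/ultraproduct argument of Theorem~\ref{MFnuclear} once for each of the sets $E_{1},\ldots,E_{m}$, using a \emph{different} matrix subalgebra for each piece; since each piece then contributes one factor of the shape $\left(  C/\omega\right)  ^{k^{2}}$ to the relevant covering number, the $k^{2}$-exponents will add up to $mk^{2}$ and yield the bound $m$. After a change of variables we may assume $\left\Vert x_{j}\right\Vert <1$ for all $j$. Write $S_{j}=\left\{  i:x_{i}\in E_{j}\right\}  $, so $\bigcup_{j=1}^{m}S_{j}=\left\{  1,\ldots,n\right\}  $ (overlaps do no harm), and let $\sigma_{d,k}$ be the block-diagonal embedding used in the proof of Theorem~\ref{MFnuclear}. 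Fix $\omega>0$.

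The first and main step is to prove the following analogue of the Claim in the proof of Theorem~\ref{MFnuclear}: there exist $\varepsilon_{0}>0$, $k_{0},N_{0}\in\mathbb{N}$ and $D_{1},\ldots,D_{m}\in\mathbb{N}$ so that, for every $0<\varepsilon<\varepsilon_{0}$, every $k\geq k_{0}$, every $N\geq N_{0}$, every $\vec{A}\in\Gamma^{\text{\textrm{top}}}\left(  x_{1},\ldots,x_{n};N,\varepsilon,k\right)  $ and every $1\leq j\leq m$, the sub-tuple $\left(  A_{i}\right)  _{i\in S_{j}}$ lies within $\omega$ (in $\sum_{i\in S_{j}}\left\Vert \cdot\right\Vert _{2}$) of a unitary conjugate of some contraction-tuple in $\sigma_{d_{j},k}\left(  \mathcal{M}_{d_{j}}\left(  \mathbb{C}\right)  \right)  $ with $d_{j}\leq D_{j}$. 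I would prove this exactly as in Theorem~\ref{MFnuclear}: if it fails, one extracts microstates $\vec{A}_{m'}\in\Gamma^{\text{\textrm{top}}}\left(  x_{1},\ldots,x_{n};m',\frac{1}{m'},k_{m'}\right)  $ for which, for some index $j$ --- which, by passing to a subsequence, may be taken to be a fixed $j_{0}$ --- no such approximation with $d_{j_{0}}<m'$ exists; passing to a further subsequence one arranges $\left(  \vec{A}_{m'},\tau_{k_{m'}}\right)  \overset{\mathrm{dist}}{\longrightarrow}\left(  \vec{x},\tau\right)  $ with $\tau\in\mathcal{T}_{MF}\left(  \mathcal{A}\right)  $ and $\vec{A}_{m'}\overset{\mathrm{t.d.}}{\longrightarrow}\vec{x}$. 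Form $\left(  \mathcal{N},\rho\right)  =\prod^{\alpha}\left(  \mathcal{M}_{k_{m'}}\left(  \mathbb{C}\right)  ,\tau_{k_{m'}}\right)  $ and put $y_{i}=\left\{  A_{m'i}\right\}  _{\alpha}$; then $\mathcal{N}$ is a $II_{1}$ factor and $\pi\left(  p\left(  \vec{x}\right)  \right)  =p\left(  \vec{y}\right)  $ defines a unital $\ast$-homomorphism $\pi:\mathcal{A}\rightarrow\mathcal{N}$ with $\rho\circ\pi=\tau$.

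The point at which the hypothesis enters --- and the step I expect to require the most care --- is the identification of $W^{\ast}\left(  \left\{  y_{i}:i\in S_{j_{0}}\right\}  \right)  \subseteq\mathcal{N}$ as a hyperfinite algebra. By Proposition~\ref{mft}(5) the restriction $\tau|_{C^{\ast}\left(  E_{j_{0}}\right)  }$ is again an MF-trace, and since $C^{\ast}\left(  E_{j_{0}}\right)  $ is MF-nuclear, $\pi_{\tau|_{C^{\ast}\left(  E_{j_{0}}\right)  }}\left(  C^{\ast}\left(  E_{j_{0}}\right)  \right)  ^{\prime\prime}$ is hyperfinite; because the $\rho$-moments of $\left\{  y_{i}:i\in S_{j_{0}}\right\}  $ coincide with the $\tau$-moments of $\left\{  x_{i}:i\in S_{j_{0}}\right\}  $, Proposition~\ref{moments} identifies these two tracial von Neumann algebras, so $W^{\ast}\left(  \left\{  y_{i}:i\in S_{j_{0}}\right\}  \right)  $ is hyperfinite. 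Being a hyperfinite subalgebra of the $II_{1}$ factor $\mathcal{N}$, it can (by \cite{KR}) be approximated in $\left\Vert \cdot\right\Vert _{2}$ by a matrix subalgebra $\eta\left(  \mathcal{M}_{d}\left(  \mathbb{C}\right)  \right)  $ with norm control on the approximants $w_{i}$; then \cite{DH1} lets one conjugate $\eta$ onto $\sigma_{d,k_{m'}}$, exactly as in Theorem~\ref{MFnuclear}, producing unitaries $U_{m'}$ so that, eventually along $\alpha$, $\left(  A_{m'i}\right)  _{i\in S_{j_{0}}}$ is within $\omega$ of $\left(  U_{m'}^{\ast}\sigma_{d,k_{m'}}\left(  w_{i}\right)  U_{m'}\right)  _{i\in S_{j_{0}}}$ with $d$ \emph{fixed} --- contradicting $d\geq m'$ once $m'>d$.

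Granting the Claim, the conclusion is routine counting. For each $j$ the set of admissible approximants for the $S_{j}$-coordinates --- unitary conjugates of contraction-tuples in $\sigma_{d,k}\left(  \mathcal{M}_{d}\left(  \mathbb{C}\right)  \right)  $ with $d\leq D_{j}$ --- is covered, by the elementary net estimates used in the proof of Theorem~\ref{MFnuclear} (see also \cite{DH}), by at most $D_{j}\left(  C/\omega\right)  ^{2\left\vert S_{j}\right\vert D_{j}^{2}}\left(  C/\omega\right)  ^{k^{2}}$ balls for a suitable constant $C=C\left(  m\right)  $. Re-assembling an arbitrary $\vec{A}\in\Gamma^{\text{\textrm{top}}}\left(  x_{1},\ldots,x_{n};N,\varepsilon,k\right)  $ from one such approximant per $j$ (for an index lying in several $S_{j}$ we keep the one with smallest $j$) shows that $\Gamma^{\text{\textrm{top}}}\left(  x_{1},\ldots,x_{n};N,\varepsilon,k\right)  $ is covered by $\prod_{j=1}^{m}\left[  D_{j}\left(  C/\omega\right)  ^{2\left\vert S_{j}\right\vert D_{j}^{2}}\left(  C/\omega\right)  ^{k^{2}}\right]  $ balls of radius $2m\omega$. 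Since the $k^{2}$-exponents sum to $mk^{2}$, taking logarithms, dividing by $-k^{2}\log\left(  2m\omega\right)  $, and letting $k\rightarrow\infty$ and then $\omega\rightarrow0^{+}$ gives, from the definition of $\delta_{\text{\textrm{top}}}$, that $\delta_{\text{\textrm{top}}}\left(  x_{1},\ldots,x_{n}\right)  \leq m$.
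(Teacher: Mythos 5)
Your proposal is correct and follows exactly the route the paper intends: the paper offers no separate proof beyond saying the ideas of Theorem \ref{MFnuclear} ``easily yield'' the corollary, and your write-up carries those ideas out block-by-block, using Proposition \ref{mft}(5) to restrict the MF-trace to each $C^{\ast}\left(  E_{j}\right)$ so that each sub-tuple is $\left\Vert \cdot\right\Vert _{2}$-close to a conjugated bounded-size matrix tuple, at the cost of one unitary-net factor $\left(  C/\omega\right)  ^{k^{2}}$ per block, whence the exponent $mk^{2}$ and the bound $m$. The minor bookkeeping issues (radius $2m\omega$ versus $\omega$, overlapping $E_{j}$'s) are handled correctly and are immaterial in the definition of $\delta_{\text{\textrm{top}}}$.
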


\section{A General Lower Bound\bigskip}

In the von Neumann algebra version of free entropy dimension, we know (see
\cite{KJ2} and \cite{DH}) that
\[
\delta_{0}\left(  x_{1},\ldots,x_{n}\right)  \geq\sup\left\{  \delta
_{0}\left(  x\right)  :x=x^{\ast}\in W^{\ast}\left(  x_{1},\ldots
,x_{n}\right)  \right\}  \text{.}%
\]
The following result from \cite{HLS} shows that the analog of this result for
$\delta_{\text{\textrm{top}}}$ is not true.

\begin{proposition}
\label{compact}Suppose $\vec{T}=\left(  T_{1},\ldots,T_{m}\right)  $ is an
irreducible tuple of operators that are "scalar+compact" on a separable
infinite-dimensional Hilbert space $H$. Then $\delta_{\text{\textrm{top}}%
}\left(  \vec{T}\right)  =0$.
\end{proposition}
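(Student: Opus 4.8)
The plan is to show that the norm-microstates spaces $\Gamma^{\text{top}}(\vec T;k,\varepsilon,P_1,\ldots,P_r)$ shrink so drastically (in the $\|\cdot\|_2$ metric, after Szarek's equivalence) that their covering numbers grow subexponentially in $k^2$, forcing $\delta_{\text{top}}(\vec T)=0$. The key structural fact is that an irreducible ``scalar+compact'' tuple generates the C*-algebra $\mathcal A=\mathcal K(H)+\mathbb C 1$, which is MF (it embeds in $\prod \mathcal M_{k}/\sum \mathcal M_{k}$), and every tracial state on $\mathcal A$ kills $\mathcal K(H)$ — so $\mathcal T_{MF}(\mathcal A)$ is a single point, the trace $\tau$ factoring through $\mathcal A/\mathcal K(H)=\mathbb C$. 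Equivalently, $\mathcal J_{MF}(\mathcal A)=\mathcal K(H)$ and $\dim(\mathcal A/\mathcal J_{MF}(\mathcal A))=1$. Thus this is the case $d=1$ of the first theorem on the MF-ideal (``$\delta_{top}=1-1/d$''), giving $\delta_{top}(\vec T)=1-1/1=0$; the cleanest write-up is to invoke that theorem directly.

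If instead one wants a self-contained argument, here is the route I would take. First, reduce via Theorem \ref{CV} (change of variables) to the case $\|T_j\|\le 1$ and, adjoining the unit, assume $1\in\{T_1,\ldots,T_m\}$. Write $T_j=\lambda_j 1+K_j$ with $K_j$ compact. Second, prove a Claim of the same shape as in the MF-ideal theorem: for every $\omega>0$ there exist $\varepsilon_0>0$ and $k_0,N_0\in\mathbb N$ such that for all $0<\varepsilon<\varepsilon_0$, $N\ge N_0$, $k\ge k_0$, and every $\vec A\in\Gamma^{\text{top}}(\vec T;N,\varepsilon,k)$, there is a unitary $U\in\mathcal U_k$ with $\sum_{j=1}^m\|A_j-U(\lambda_j 1)U^\ast\|_2=\sum_j\|A_j-\lambda_j 1\|_2<\omega$. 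The proof is by contradiction and ultraproducts, exactly mirroring the MF-ideal proof: a bad sequence $\vec A_m\in\Gamma^{\text{top}}(\vec T;m,\tfrac1m,k_m)$ has $\vec A_m\overset{t.d.}{\longrightarrow}\vec T$ and, passing to a subsequence, $(\vec A_m,\tau_{k_m})\overset{\mathrm{dist}}{\longrightarrow}(\vec T,\tau)$ for some $\tau\in\mathcal T_{MF}(\mathcal A)$; forming $\pi_0\colon\mathcal A\to\mathcal N=\prod^\alpha(\mathcal M_{k_m}(\mathbb C),\tau_{k_m})$ with $y_j=\{A_{mj}\}_\alpha$, faithfulness of the ultraproduct trace gives $\mathcal K(H)=\ker(\mathcal A\to\mathbb C)\subseteq\ker\pi_0$, so $\pi_0$ factors through $\mathbb C$ and $y_j=\lambda_j 1$ in $\mathcal N$; hence $\lim_{m\to\alpha}\sum_j\|A_{mj}-\lambda_j 1\|_2=0$, contradicting the bad choice.

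Third, conclude the covering-number estimate. Once the Claim holds, every $\vec A\in\Gamma^{\text{top}}(\vec T;N,\varepsilon,k)$ lies within $\omega$ (in $\|\cdot\|_2$) of the single point $(\lambda_1 1,\ldots,\lambda_m 1)$, so
\[
\nu_2\bigl(\Gamma^{\text{top}}(\vec T;N,\varepsilon,k),\omega\bigr)\le 1
\]
for $k\ge k_0$, $N\ge N_0$, $0<\varepsilon<\varepsilon_0$. Plugging this into the definition of $\delta_{top}$ (using that $\nu_\infty$ may be replaced by $\nu_2$ by Szarek, Proposition \ref{Szarek}) gives
\[
\delta_{top}(\vec T)=\limsup_{\omega\to0^+}\inf_{\varepsilon,r}\limsup_{k\to\infty}\frac{\log\nu_2(\Gamma^{\text{top}}(\vec T;N,\varepsilon,k),\omega)}{-k^2\log\omega}\le\limsup_{\omega\to0^+}\frac{\log 1}{-k^2\log\omega}=0,
\]
and $\delta_{top}(\vec T)\ge0$ trivially (e.g.\ since it dominates the finite-dimensional value for the image in $\mathbb C$, which is $0$). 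Hence $\delta_{top}(\vec T)=0$.

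The main obstacle is establishing the Claim, i.e.\ that microstates for $\vec T$ are forced in $\|\cdot\|_2$ to be near the scalar tuple. The content is entirely in the observation that $\mathcal A=\mathcal K(H)+\mathbb C1$ has only one MF-trace (equivalently, only one tracial state, all supported on $\mathbb C$), together with the ultraproduct argument transferring this rigidity to matrix microstates; everything after that is bookkeeping with covering numbers. In practice the slickest proof simply cites the already-proved theorem that $\delta_{top}=1-1/\dim(\mathcal A/\mathcal J_{MF}(\mathcal A))$ and the identification $\mathcal J_{MF}(\mathcal K(H)+\mathbb C1)=\mathcal K(H)$.
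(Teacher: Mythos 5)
Your argument is correct, but note that this paper never proves Proposition \ref{compact} internally: it is quoted from \cite{HLS}, where the statement was established before the machinery of MF-traces existed. What you do instead is re-derive it inside the present paper's framework, and that works: since the tuple is irreducible and $H$ is infinite-dimensional, some $T_j$ is not a scalar, so the unital algebra it generates is $\mathcal{A}=\mathcal{K}(H)+\mathbb{C}1$, which is MF (quasidiagonal), every tracial state kills $\mathcal{K}(H)$, hence $\mathcal{J}_{MF}(\mathcal{A})=\mathcal{K}(H)$ and $\dim\mathcal{A}/\mathcal{J}_{MF}(\mathcal{A})=1$; the theorem $\delta_{\mathrm{top}}=1-1/d$ then gives $0$, and there is no circularity because that theorem's proof in Section 3 nowhere uses Proposition \ref{compact}. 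Your self-contained variant is also sound and is in fact a genuine simplification of the Section 3 argument: because the quotient is $\mathbb{C}$, the target microstate is the single scalar tuple $(\lambda_1 I_k,\ldots,\lambda_m I_k)$, which is fixed by unitary conjugation, so the bookkeeping with canonical representations and Szarek's unitary-orbit nets disappears entirely; the ultraproduct step (faithfulness of the ultraproduct trace forcing $\ker\pi_0\supseteq\mathcal{K}(H)$, hence $y_j=\lambda_j 1$ and $\lim_{m\to\alpha}\sum_j\Vert A_{mj}-\lambda_j 1\Vert_2=0$) is exactly the mechanism the paper alludes to when it says any trace must vanish on the compacts. Two cosmetic points: with the paper's convention that covering-ball centers lie in the set, your Claim yields $\nu_2\left(\Gamma^{\mathrm{top}},2\omega\right)\leq 1$ rather than $\nu_2\left(\Gamma^{\mathrm{top}},\omega\right)\leq 1$, which changes nothing asymptotically, and the stray $k$ in your final display should have been absorbed by the $\limsup_{k\to\infty}$ before letting $\omega\to 0^{+}$.
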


There is still a hybrid version that is true. Recall that if $\pi_{\tau
}:\mathcal{A}\rightarrow B\left(  H\right)  $ and $e$ make up the GNS
construction for a tracial state $\tau$ on $\mathcal{A}$, then $\hat{\tau}%
:\pi\left(  \mathcal{A}\right)  ^{\prime\prime}\rightarrow\mathbb{C}$ is the
faithful tracial state defined by $\hat{\tau}\left(  T\right)  =\left(
Te,e\right)  $.\bigskip

\begin{theorem}
\label{lower}Suppose $\mathcal{A}=C^{\ast}\left(  x_{1},\ldots,x_{n}\right)  $
is a unital $MF$-algebra and $\tau\in\mathcal{T}_{MF}\left(  \mathcal{A}%
\right)  $. Suppose $b=b^{\ast}\in\pi_{\tau}\left(  \mathcal{A}\right)
^{\prime\prime}$. Then%
\[
\delta_{top}\left(  x_{1},\ldots,x_{n}\right)  \geq\delta_{0}\left(
b,\hat{\tau}\right)  .
\]

\end{theorem}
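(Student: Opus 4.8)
The plan is to reduce the problem, via the Change of Variables theorem (Theorem \ref{CV}), to the case where $b$ is one of the generators, and then exhibit microstates for $b$ inside the topological microstate spaces $\Gamma^{\text{top}}$. The key point is that $\delta_0(b,\hat\tau)$ is computed from matricial microstates that match the moments of $b$, and by Kaplansky density together with the MF-trace property we can upgrade these to microstates that simultaneously approximate the $\ast$-polynomial \emph{norms} of $\vec x$.

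\textbf{Step 1: Add $b$ to the generating tuple.} Since $b=b^\ast\in\pi_\tau(\mathcal A)^{\prime\prime}$, approximate $b$ in $\|\cdot\|_2$ (with respect to $\hat\tau$) by a self-adjoint element $b_0=p(\vec x)$ for a $\ast$-polynomial $p$, using Kaplansky's density theorem; one checks that $\delta_0(b,\hat\tau)=\delta_0(b_0,\hat\tau)$ is preserved under small $\|\cdot\|_2$-perturbations (this is a standard property of $\delta_0$, established in \cite{DH}). Then $b_0\in C^\ast(x_1,\dots,x_n)$, and by Theorem \ref{CV} (with $y_1 = b_0$, taking $\varphi_1 = p$ suitably truncated so that the Lipschitz condition (2) holds on balls of the relevant radius — standard since polynomials are Lipschitz on bounded sets) we would get that it suffices to prove the inequality when $b$ itself is among the $x_j$. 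Actually the cleaner route: it suffices to show $\delta_{top}(x_1,\dots,x_n) \ge \delta_{top}(x_1,\dots,x_n,b_0)$, which is immediate since adjoining an element of the algebra does not change $\delta_{top}$ (Theorem \ref{CV}), and then to bound $\delta_{top}(x_1,\dots,x_n,b_0)$ from below by $\delta_0(b_0,\hat\tau)$.

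\textbf{Step 2: Produce good microstates.} Since $\tau\in\mathcal T_{MF}(\mathcal A)$, choose $\vec A_k\in\mathcal M_{m_k}^n(\mathbb C)$ with $\vec A_k\overset{t.d.}{\longrightarrow}\vec x$ and $(\vec A_k,\tau_{m_k})\overset{\mathrm{dist}}{\longrightarrow}(\vec x,\tau)$. Set $B_k = p(\vec A_k)$, which is self-adjoint and, since $\vec A_k\overset{t.d.}{\longrightarrow}\vec x$, satisfies $\|q(\vec A_k, B_k)\|\to\|q(\vec x,b_0)\|$ for every $\ast$-polynomial $q$ (every such $q$ is a noncommutative continuous function of $\vec x$ alone after substituting $b_0=p(\vec x)$, so the Remark after the definition of MF-trace applies). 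Moreover $(B_k,\tau_{m_k})\overset{\mathrm{dist}}{\longrightarrow}(b_0,\hat\tau)$ since $\tau_{m_k}(q(B_k))=\tau_{m_k}(q(p(\vec A_k)))\to\tau(q(p(\vec x)))=\hat\tau(q(b_0))$. Thus $\{B_k\}$ is a sequence of matricial microstates for $(b_0,\hat\tau)$ with bounded operator norm, so it is cofinal in the $\delta_0$-microstate spaces: for every $\varepsilon,N$ and large $k$, $B_k\in\Gamma_R(b_0;m_k,\varepsilon,N)$.

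\textbf{Step 3: Compare covering numbers.} For the $\delta_0$ estimate one may, after passing along this fixed sequence $\{B_k\}$ and its unitary orbits, lower-bound $\nu_2(\Gamma_R(b_0;m_k,\varepsilon,N),\omega)$. The crucial containment is
\[
\Gamma_R(b_0;m_k,\varepsilon,N)\times\{\vec A_k\}\ \subseteq\ \text{(something inside)}\ \Gamma^{\text{top}}(x_1,\dots,x_n,b_0;N',\varepsilon',m_k),
\]
but more carefully: given any self-adjoint $C\in\Gamma_R(b_0;m_k,\varepsilon,N)$ with $\|C-B_k\|_2$ small, we need $(\vec A_k, C)$ to be a topological microstate, which requires controlling $\ast$-polynomial \emph{norms}, not just $\|\cdot\|_2$. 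The honest fix is to restrict attention to $C$ in a $\|\cdot\|_2$-ball around $B_k$ and observe that, since $\vec A_k\overset{t.d.}{\longrightarrow}\vec x$, the pairs $(\vec A_k,B_k)$ are eventually in $\Gamma^{\text{top}}(\vec x,b_0;N',\varepsilon',m_k)$ for any prescribed $N',\varepsilon'$; then any element of a suitably small unitary-invariant neighborhood stays in a slightly larger $\Gamma^{\text{top}}$. So the covering number of a $\|\cdot\|_2$-neighborhood of $\mathcal U(B_k)$ intersected with a fixed-moment-microstate set is bounded below by $\nu_2$ of the corresponding $\delta_0$-microstate set, up to a Szarek factor $C_n^{m_k^2}$ (Proposition \ref{Szarek}) and up to the unitary-orbit factor $\nu_2(\mathcal U_{m_k},\omega)\le (9\pi e/\omega)^{m_k^2}$ from Lemma (3); crucially the $\log$ of this extra factor divided by $-m_k^2\log\omega$ tends to $1$, not to something that kills the estimate — but in $\delta_{top}$ we divide by $-k^2\log\omega$ whereas in $\delta_0$ we also divide by $-k^2\log\omega$, so the orbit contribution adds at most $1$... \emph{wait} — this is exactly why the theorem is only a lower bound by $\delta_0(b,\hat\tau)$ and not more, and why one must be careful. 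The right statement: $\Gamma^{\text{top}}(\vec x, b_0;\cdots)\supseteq$ a set whose $\nu_2$-covering number is at least $\nu_2(\Gamma_R(b_0;\cdots),\omega)$, because the first $n$ coordinates can be frozen near $\vec A_k$ (this costs nothing in the exponent) while the last coordinate ranges over a genuine $\delta_0$-microstate set for $b_0$.

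\textbf{The main obstacle} I anticipate is precisely Step 3: making rigorous that topological microstates for $(\vec x, b_0)$ contain enough "room" in the $b_0$-direction to recover all the $\delta_0$-microstates of $b_0$, \emph{without} the norm constraints on mixed $\ast$-polynomials in $\vec x$ and $b_0$ cutting down the covering number. The resolution should be that for a topological microstate we only need the \emph{norms} $\|q(\vec A_k, C)\|$ to be close to $\|q(\vec x, b_0)\|$; holding $\vec A_k$ fixed and letting $C$ vary over $\|\cdot\|_2$-small (hence, after a further argument, operator-norm-controlled via spectral projections, or simply by working with the equivalent $\nu_\infty$ formulation and Szarek) perturbations of $B_k$ keeps these norms close by continuity of noncommutative polynomials on bounded tuples. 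Then the covering-number comparison goes through, and feeding it into the definitions of $\delta_{top}$ and $\delta_0$ (both normalized by $-k^2\log\omega$, both using $\limsup$s in the same order) yields $\delta_{top}(x_1,\dots,x_n)=\delta_{top}(x_1,\dots,x_n,b_0)\ge\delta_0(b_0,\hat\tau)=\delta_0(b,\hat\tau)$.
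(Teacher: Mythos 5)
Your Steps 1--2 are essentially sound in outline (adjoining $b_{0}=p(\vec{x})$ via Theorem \ref{CV}, and producing $B_{k}=p(\vec{A}_{k})$ with $(\vec{A}_{k},\tau_{m_{k}})\overset{\mathrm{dist}}{\longrightarrow}(\vec{x},\tau)$ is exactly how the paper starts), but Step 3, which you yourself flag as the main obstacle, is a genuine gap, and the resolution you sketch would not work. A generic element $C$ of $\Gamma_{R}(b_{0};m_{k},\varepsilon,N)$ only approximately matches \emph{moments} of $b_{0}$; it need not satisfy the \emph{norm} constraints defining $\Gamma^{\text{top}}(\vec{x},b_{0};\cdots)$ (already $\Vert C\Vert$ can be close to $R>\Vert b_{0}\Vert$), so the containment you want with the first $n$ coordinates frozen at $\vec{A}_{k}$ fails. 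Your proposed fix -- letting $C$ range over a $\Vert\cdot\Vert_{2}$-small neighborhood of $B_{k}$ -- does not repair it, because operator norms of polynomials are not controlled by $\Vert\cdot\Vert_{2}$-perturbations of the tuple; and if instead you take an operator-norm ball of radius $\delta(\varepsilon,N)$ around $B_{k}$, then for fixed $\omega$ the infimum over $(\varepsilon,N)$ (which in the definition of $\delta_{\text{top}}$ is taken \emph{before} $\omega\rightarrow0^{+}$) forces $\delta(\varepsilon,N)<\omega$ eventually, the covering count collapses to $1$, and the lower bound trivializes. Moreover, even granting Step 3, nothing in your argument identifies the resulting covering-number expression with $\delta_{0}(b_{0},\hat{\tau})$: matching the full microstate space is neither achievable nor what is needed.

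The missing idea, which is the heart of the paper's proof, is to use \emph{unitary orbits} together with the hyperfiniteness covering formula. Since the norm conditions defining $\Gamma^{\text{top}}$ are unitarily invariant, one has the exact containment $\mathcal{U}(\vec{A}_{k})\subseteq\Gamma^{\text{top}}(\vec{x};N,\varepsilon,m_{k})$ for all large $k$, with no loss depending on $(\varepsilon,N)$; since $f$ is $M$-Lipschitz on bounded tuples, any $\omega$-net of $\mathcal{U}(\vec{A}_{k})$ pushes forward to an $M\omega$-net of $\mathcal{U}(f(\vec{A}_{k}))$, so $\nu_{2}(\Gamma^{\text{top}}(\vec{x};N,\varepsilon,m_{k}),\omega)\geq\nu_{2}(\mathcal{U}(f(\vec{A}_{k})),M\omega)$; and because $W^{\ast}(b_{0})$ is abelian, hence hyperfinite, Theorem \ref{hyperfinite} (from \cite{DH}) says that the covering numbers of the single sequence of orbits $\mathcal{U}(f(\vec{A}_{k}))$ already compute $\delta_{0}(b_{0},\hat{\tau})$ -- this is what replaces your attempt to capture the whole $\delta_{0}$-microstate space inside $\Gamma^{\text{top}}$. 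A secondary error: your claim that $\delta_{0}$ is unchanged under small $\Vert\cdot\Vert_{2}$-perturbations is false (Voiculescu's eigenvalue formula shows $\delta_{0}$ of a single selfadjoint is only lower semicontinuous in distribution); the passage from polynomials $b_{k}$ to a general $b=b^{\ast}\in\pi_{\tau}(\mathcal{A})^{\prime\prime}$ must go through Kaplansky density plus the semicontinuity theorem \cite{DV3}, i.e.\ $\delta_{0}(b,\hat{\tau})\leq\liminf_{k}\delta_{0}(b_{k},\hat{\tau})$, which is the direction actually needed and is how the paper concludes.
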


\begin{proof}
First suppose $b=\pi_{\tau}\left(  f\left(  x_{1},\ldots,x_{x}\right)
\right)  $ for some $\ast$-polynomial $f=f^{\ast}$. There is an $M>0$ such
that for all operators $A_{1},B_{1}\ldots,A_{n},B_{n}$ with $\left\Vert
A_{j}\right\Vert ,\left\Vert B_{j}\right\Vert \leq\left\Vert x_{j}\right\Vert
+1$ for $1\leq j\leq n,$ we have%
\[
\left\Vert f\left(  A_{1},\ldots A_{n}\right)  -f\left(  B_{1},\ldots
,B_{n}\right)  \right\Vert \leq M\left\Vert \left(  A_{1},\ldots,A_{n}\right)
-\left(  B_{1},\ldots,B_{n}\right)  \right\Vert .
\]

Since $\tau\in\mathcal{T}_{MF}\left(  \mathcal{A}\right)  ,$ there is a
sequence $\left\{  m_{k}\right\}  $ of positive integers and sequences
$\left\{  A_{1k}\right\}  ,\ldots,\left\{  A_{nk}\right\}  $ with
$A_{1k},\ldots,A_{nk}\in\mathcal{M}_{m_{k}}\left(  \mathbb{C}\right)  $ such
that, for every $\ast$-polynomial $p$, $\lim_{k\rightarrow\infty}\left\Vert
p\left(  A_{1k},\ldots,A_{nk}\right)  \right\Vert =\left\Vert p\left(
x_{1},\ldots,x_{n}\right)  \right\Vert $, and
\[
\lim_{k\rightarrow\infty}\tau_{m_{k}}\left(  p\left(  A_{1k},\ldots
,A_{nk}\right)  \right)  =\tau\left(  p\left(  x_{1},\ldots,x_{n}\right)
\right)  .
\]
If $\varepsilon>0$ and $N\in\mathbb{N}$, then there is a $k_{0}\in\mathbb{N}$
such that for all $k\geq k_{0},$ we have $\vec{A}_{k}\in\Gamma^{\text{top}%
}\left(  \vec{x};N,\varepsilon,m_{k}\right)  $ and $\vec{A}_{k}\in
\Gamma\left(  \pi_{\tau}\left(  \vec{x}\right)  ;N,m_{k},\varepsilon;\hat
{\tau}\right)  $. It follows that $\left(  f\left(  \vec{A}_{k}\right)
,\tau_{m_{k}}\right)  \overset{dist}{\longrightarrow}\left(  f\left(
\pi_{\tau}\left(  \vec{x}\right)  \right)  ,\hat{\tau}\right)  =\left(
b,\hat{\tau}\right)  $. We now have%
\begin{align*}
\delta_{top}\left(  \vec{x}\right)   &  \geq\limsup_{\omega\rightarrow0^{+}%
}\inf_{\varepsilon,N}\limsup_{k\rightarrow\infty}\frac{\log\nu_{2}\left(
\Gamma^{\text{top}}\left(  \vec{x};N,\varepsilon,m_{k}\right)  ,\omega\right)
}{-m_{k}^{2}\log\omega}\\
\geq &  \limsup_{\omega\rightarrow0^{+}}\inf_{\varepsilon,N}\limsup
_{k\rightarrow\infty}\frac{\log\nu_{2}\left(  \mathcal{U}\left(  \vec{A}%
_{k}\right)  ,\omega\right)  }{-m_{k}^{2}\log\omega}.
\end{align*}

It follows from the definition of $M$ that, for any $\omega$-net $\mathcal{S}$
for $\mathcal{U}\left(  \vec{A}_{k}\right)  $, we have $f\left(
\mathcal{S}\right)  $ is an $M\omega$-net for $f\left(  \mathcal{U}\left(
\vec{A}_{k}\right)  \right)  =\mathcal{U}\left(  f\left(  \vec{A}_{k}\right)
\right)  $. Hence we have%
\[
\limsup_{\omega\rightarrow0^{+}}\inf_{\varepsilon,N}\limsup_{k\rightarrow
\infty}\frac{\log\nu_{2}\left(  \mathcal{U}\left(  \vec{A}_{k}\right)
,\omega\right)  }{-m_{k}^{2}\log\omega}%
\]%
\[
\geq\limsup_{\omega\rightarrow0^{+}}\inf_{\varepsilon,N}\limsup_{k\rightarrow
\infty}\frac{\log\nu_{2}\left(  \mathcal{U}\left(  f\left(  \vec{A}%
_{k}\right)  \right)  ,M\omega\right)  }{-m_{k}^{2}\log\left(  M\omega\right)
}\frac{\log\left(  M\omega\right)  }{\log\omega}.
\]
However, it was shown in \cite{DH} that
\[
\limsup_{\omega\rightarrow0^{+}}\inf_{\varepsilon,N}\limsup_{k\rightarrow
\infty}\frac{\log\nu_{2}\left(  \mathcal{U}\left(  f\left(  \vec{A}%
_{k}\right)  \right)  ,M\omega\right)  }{-m_{k}^{2}\log\left(  M\omega\right)
}=\delta_{0}\left(  p\left(  \vec{x}\right)  ,\hat{\tau}\right)  =\delta
_{0}\left(  b,\hat{\tau}\right)  .
\]
Therefore we have shown that $\delta_{top}\left(  x_{1},\ldots,x_{n}\right)
\geq\delta_{0}\left(  b,\hat{\tau}\right)  $ whenever $b=b^{\ast}$ is a $\ast
$-polynomial in $\pi_{\tau}\left(  \vec{x}\right)  $. However, if $b=b^{\ast
}=\pi\left(  C^{\ast}\left(  \vec{x}\right)  \right)  ^{\prime\prime}$ is
arbitrary, it follows from the Kaplansky density theorem that there is a
sequence $\left\{  b_{k}\right\}  $ of selfadjoint elements such that each
$b_{k}$ is a $\ast$-polynomial in $\pi_{\tau}\left(  \vec{x}\right)  $ such
that $\left\Vert b_{k}\right\Vert \leq\left\Vert b\right\Vert $ and such that
$b_{k}\rightarrow b$ in the $\ast$-strong operator topology, which implies
that $\left(  b_{k},\hat{\tau}\right)  \overset{dist}{\longrightarrow}\left(
b,\hat{\tau}\right)  $. However, D. Voiculescu's semicontinuity theorem
\cite{DV3} for $\delta_{0}$ implies that $\delta_{0}\left(  b,\hat{\tau
}\right)  \leq\liminf_{k\rightarrow\infty}\delta_{0}\left(  b_{k},\hat{\tau
}\right)  \leq\delta_{0}\left(  \vec{x}\right)  $. \bigskip
\end{proof}

Voiculescu proved that if $x=x^{\ast}$ is an element of a von Neumann algebra
with faithful trace $\tau$, then%
\[
\delta_{0}\left(  x\right)  =1-%
{\displaystyle\sum_{t\text{ is an eigenvalue of }x}}
\tau\left(  P_{t}\right)  ^{2},
\]
where $P_{t}$ is the orthogonal projection onto $\ker\left(  x-t\right)  $. If
$x$ has no eigenvalues, then $\delta_{0}\left(  x\right)  =1$. It is
well-known \cite{KR} that every selfadjoint element of a finite von Neumann
algebra $\mathcal{M}$ has an eigenvalue if and only if $\mathcal{M}$ has a
finite-dimensional invariant subspace.

\begin{theorem}
\label{alt}Suppose $\mathcal{A}=C^{\ast}\left(  x_{1},\ldots,x_{n}\right)  $
is an $MF$-algebra and either

\begin{enumerate}
\item $\mathcal{A}$ has no finite-dimensional representations, or

\item $\mathcal{A}$ has infinitely many non-unitarily-equivalent
finite-dimensional irreducible representations.
\end{enumerate}
\end{theorem}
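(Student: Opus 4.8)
The goal is to show $\delta_{top}(x_1,\ldots,x_n)\ge 1$ under either hypothesis, and the uniform strategy is the same in both cases: produce an MF-trace $\tau$ (or a sequence of them) together with a self-adjoint $b$ in the GNS von Neumann algebra $\pi_\tau(\mathcal{A})''$ whose $\hat\tau$-spectral distribution either has no atoms or has atoms whose squared masses sum to an arbitrarily small number, and then combine Theorem~\ref{lower} with Voiculescu's formula $\delta_0(b,\hat\tau)=1-\sum_t\hat\tau(P_t)^2$ recalled before the theorem, where $P_t$ is the spectral projection of $b$ at $t$.

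For hypothesis (1): since $\mathcal{A}$ is MF, Proposition~\ref{mft}(1) supplies some $\tau\in\mathcal{T}_{MF}(\mathcal{A})$; put $\mathcal{M}=\pi_\tau(\mathcal{A})''$, a finite von Neumann algebra with faithful normal trace $\hat\tau$. I would first observe that $\mathcal{M}$ can have no finite-dimensional invariant subspace $K$: restriction $a\mapsto\pi_\tau(a)|_K$ would then be a nonzero unital $\ast$-homomorphism from $\mathcal{A}$ into $B(K)\cong\mathcal{M}_{\dim K}(\mathbb{C})$, contradicting (1). By the equivalence quoted just before the theorem, $\mathcal{M}$ therefore contains a self-adjoint $b$ with no eigenvalue, so $\delta_0(b,\hat\tau)=1$, and Theorem~\ref{lower} gives $\delta_{top}(x_1,\ldots,x_n)\ge 1$.

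For hypothesis (2): fix pairwise non-unitarily-equivalent finite-dimensional irreducible representations $\pi_i:\mathcal{A}\to\mathcal{M}_{d_i}(\mathbb{C})$, $i\in\mathbb{N}$. For each $N$ I would take $\tau^{(N)}=\frac1N\sum_{i=1}^N\tau_{d_i}\circ\pi_i$, a finite-dimensional tracial state and hence a member of $\mathcal{T}_{MF}(\mathcal{A})$ by Proposition~\ref{mft}(2). Because the $\pi_i$ are irreducible and mutually inequivalent, $\Pi_N:=\pi_1\oplus\cdots\oplus\pi_N$ has commutant $\bigoplus_{i=1}^N\mathbb{C}$, so $\Pi_N$ maps $\mathcal{A}$ onto $\mathcal{B}_N:=\bigoplus_{i=1}^N\mathcal{M}_{d_i}(\mathbb{C})$; since the trace $\frac1N\sum_i\tau_{d_i}$ it induces on $\mathcal{B}_N$ is faithful, $\pi_{\tau^{(N)}}(\mathcal{A})''$ is canonically identified with $\bigl(\mathcal{B}_N,\frac1N\sum_i\tau_{d_i}\bigr)$. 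Inside $\mathcal{B}_N$ I would choose $b^{(N)}=b_1\oplus\cdots\oplus b_N$ with each $b_i=b_i^\ast$ having $d_i$ distinct eigenvalues, arranged so that all $\sum_i d_i$ of these are distinct; then the distribution of $b^{(N)}$ has $\sum_{i=1}^N d_i$ atoms, each of mass $\tfrac1{Nd_i}$, so
\[
\delta_0\bigl(b^{(N)},\widehat{\tau^{(N)}}\bigr)=1-\sum_{i=1}^N d_i\Bigl(\tfrac1{Nd_i}\Bigr)^2=1-\frac1{N^2}\sum_{i=1}^N\frac1{d_i}\ \ge\ 1-\frac1N .
\]
By Theorem~\ref{lower}, $\delta_{top}(x_1,\ldots,x_n)\ge 1-\tfrac1N$ for every $N$, and letting $N\to\infty$ finishes this case.

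I expect the only points needing genuine care to be, in case (2), the identification of $\pi_{\tau^{(N)}}(\mathcal{A})''$ with $\mathcal{B}_N$ — which is exactly what guarantees that the chosen $b^{(N)}$ really lives in the GNS von Neumann algebra, and which follows from surjectivity of $\Pi_N$ together with faithfulness of the induced trace — and, in case (1), correctly applying the quoted equivalence between "every self-adjoint element of $\mathcal{M}$ has an eigenvalue" and "$\mathcal{M}$ has a finite-dimensional invariant subspace". Neither is a serious obstacle, so the full write-up should be short.
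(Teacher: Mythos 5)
Your proposal is correct. Case (1) is essentially the paper's own argument: pick any $\tau\in\mathcal{T}_{MF}(\mathcal{A})$ (nonempty by Proposition \ref{mft}), note that a finite-dimensional invariant subspace of $\pi_{\tau}(\mathcal{A})''$ would compress to a finite-dimensional representation of $\mathcal{A}$, invoke the quoted equivalence to get a selfadjoint $b$ with no eigenvalues, and apply Voiculescu's formula together with Theorem \ref{lower}. Case (2), however, is a genuinely different route. The paper handles it by forming $\rho_{N}=\pi_{1}\oplus\cdots\oplus\pi_{N}$, observing $\dim\rho_{N}(\mathcal{A})\geq N$, and then combining Corollary \ref{pig} with the formula $\delta_{\mathrm{top}}=1-\frac{1}{d}$ for $d$-dimensional C*-algebras from \cite{HS3}, so that $\delta_{\mathrm{top}}(\vec{x})\geq\delta_{\mathrm{top}}(\rho_{N}(\vec{x}))=1-\frac{1}{\dim\rho_{N}(\mathcal{A})}\geq 1-\frac{1}{N}$. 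You instead build the finite-dimensional tracial states $\tau^{(N)}=\frac{1}{N}\sum_{i=1}^{N}\tau_{d_{i}}\circ\pi_{i}$, which lie in $\mathcal{T}_{MF}(\mathcal{A})$ by Proposition \ref{mft}(2), identify $\pi_{\tau^{(N)}}(\mathcal{A})''$ with $\bigoplus_{i=1}^{N}\mathcal{M}_{d_{i}}(\mathbb{C})$ (your surjectivity-plus-faithful-trace argument is fine, since pairwise inequivalence forces the commutant of $\Pi_{N}$ to be $\bigoplus_{i}\mathbb{C}I_{d_{i}}$), choose $b^{(N)}$ with all $\sum_{i}d_{i}$ eigenvalues distinct, and compute $\delta_{0}\bigl(b^{(N)},\widehat{\tau^{(N)}}\bigr)=1-\frac{1}{N^{2}}\sum_{i=1}^{N}\frac{1}{d_{i}}\geq 1-\frac{1}{N}$, so Theorem \ref{lower} gives the bound; all steps check out. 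What each approach buys: the paper's is shorter because it leans on two prior results (Corollary \ref{pig} and the finite-dimensional evaluation of $\delta_{\mathrm{top}}$ in \cite{HS3}), while yours is more uniform and self-contained within the present paper's machinery, since both alternatives of the theorem are reduced to the single new lower bound of Theorem \ref{lower} together with Voiculescu's one-variable formula, with no appeal to Corollary \ref{pig} or to the \cite{HS3} computation.
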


Then $\delta_{top}\left(  x_{1},\ldots,x_{n}\right)  \geq1$.

\begin{proof}
First suppose $\mathcal{A}$ has infinitely many non-unitarily-equivalent
finite-dimensional irreducible representations $\pi_{1},\pi_{2},\ldots$ . It
follows from finite-dimensionality that, for each positive integer $N$, if we
let $\rho_{N}=\pi_{1}\oplus\cdots\oplus\pi_{N},~$then
\[
\rho_{N}\left(  \mathcal{A}\right)  =\rho_{N}\left(  \mathcal{A}\right)
^{\prime\prime}=\pi_{1}\left(  \mathcal{A}\right)  \oplus\cdots\oplus\pi
_{N}\left(  \mathcal{A}\right)  ,
\]
which implies
\[
\dim\rho_{N}\left(  \mathcal{A}\right)  =\dim\left[  \pi_{1}\left(
\mathcal{A}\right)  \oplus\cdots\oplus\pi_{N}\left(  \mathcal{A}\right)
\right]  \geq N.
\]
However, it follows from Corollary \ref{pig} that
\[
\delta_{top}\left(  x_{1},\ldots,x_{n}\right)  \geq\delta_{top}\left(
\rho_{N}\left(  x_{1}\right)  ,\ldots,\rho_{N}\left(  x_{n}\right)  \right)
=1-\frac{1}{\dim\rho_{N}\left(  \mathcal{A}\right)  }\geq1-\frac{1}{N}%
\]
for $N=1,2,\ldots$ . Hence $\delta_{top}\left(  x_{1},\ldots,x_{n}\right)
\geq1.$

Next assume that $\mathcal{A}$ has no finite-dimensional representations.
Suppose $\tau\in\mathcal{T}_{MF}\left(  \mathcal{A}\right)  $. Let $\left(
\pi,M,e\right)  $ denote the GNS construction for $\tau$, i.e., $\pi
:\mathcal{A}\rightarrow B\left(  M\right)  $ is a unital $\ast$-homomorphism
with a unit cyclic vector $e$ such that, for every $a\in\mathcal{A}$, we have
$\tau\left(  a\right)  =\left(  \pi\left(  a\right)  e,e\right)  $. Let
$\mathcal{B=\pi}\left(  \mathcal{A}\right)  ^{\prime\prime}$. Since
$\mathcal{A}$ has no finite-dimensional representation, $\pi\left(
\mathcal{A}\right)  ^{\prime\prime}$ has no nonzero finite-dimensional
invariant subspace. Hence there is an $a=a^{\ast}\in\pi\left(  \mathcal{A}%
\right)  ^{\prime\prime}$ such that $a$ has no eigenvalues. Therefore from
Voiculescu's formula, $\delta_{0}\left(  a\right)  =1$. By Theorem \ref{lower}
we conclude $\delta_{top}\left(  x_{1},\ldots,x_{n}\right)  \geq1$.
\end{proof}

\bigskip

\begin{corollary}
\label{rfd2}If $\mathcal{A}$ is a unital residually finite-dimensional
C*-algebra, then, for any generating set $\left\{  x_{1},\ldots,x_{n}\right\}
$ of $\mathcal{A}$, we have%
\[
\delta_{top}\left(  x_{1},\ldots,x_{n}\right)  \geq1-\frac{1}{\dim
_{\mathbb{C}}\mathcal{A}}.
\]
If, in addition, $\mathcal{A}$ is MF-nuclear, then equality holds.\bigskip
\end{corollary}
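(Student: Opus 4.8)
The plan is to treat the finite- and infinite-dimensional cases separately. If $\dim_{\mathbb{C}}\mathcal{A}=d<\infty$, then $\dim C^{\ast}\left(x_{1},\ldots,x_{n}\right)=d$, so the result of \cite{HS3} recalled above (that $\dim C^{\ast}\left(x_{1},\ldots,x_{n}\right)=d<\infty$ forces $\delta_{top}\left(x_{1},\ldots,x_{n}\right)=1-\frac{1}{d}$) gives both the inequality and, since $1-\frac{1}{\dim_{\mathbb{C}}\mathcal{A}}=1-\frac{1}{d}$ and a finite-dimensional C*-algebra is automatically nuclear, hence MF-nuclear, the asserted equality as well. So from now on I assume $\dim_{\mathbb{C}}\mathcal{A}=\infty$, in which case $1-\frac{1}{\dim_{\mathbb{C}}\mathcal{A}}$ is to be read as $1$.

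The key step is to check that an infinite-dimensional RFD algebra falls under hypothesis $(2)$ of Theorem \ref{alt}, i.e.\ that it has infinitely many pairwise non-unitarily-equivalent finite-dimensional irreducible representations. By definition the finite-dimensional representations of $\mathcal{A}$ separate points, and each of them decomposes as a finite direct sum of finite-dimensional irreducible representations, so the finite-dimensional irreducible representations already separate points. Were there only finitely many of them up to unitary equivalence, say $\pi_{1},\ldots,\pi_{N}$, then $\pi_{1}\oplus\cdots\oplus\pi_{N}$ would be an injective $\ast$-homomorphism of $\mathcal{A}$ into a finite-dimensional C*-algebra, contradicting $\dim_{\mathbb{C}}\mathcal{A}=\infty$.

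Since every RFD algebra is MF, Theorem \ref{alt} now applies and yields $\delta_{top}\left(x_{1},\ldots,x_{n}\right)\geq1=1-\frac{1}{\dim_{\mathbb{C}}\mathcal{A}}$, proving the lower bound. If in addition $\mathcal{A}$ is MF-nuclear, Theorem \ref{MFnuclear} gives $\delta_{top}\left(x_{1},\ldots,x_{n}\right)\leq1$, and the two inequalities combine to give equality. The only real content is the representation-theoretic observation of the second paragraph, which ensures that infinite-dimensionality of an RFD algebra feeds into the hypothesis of Theorem \ref{alt}; the remainder is bookkeeping about the convention $1/\infty=0$ and direct citation of results already established.
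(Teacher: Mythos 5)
Your proof is correct and follows essentially the route the paper intends: the finite-dimensional case is exactly the \cite{HS3} formula, and in the infinite-dimensional case your observation that an infinite-dimensional RFD algebra must have infinitely many inequivalent finite-dimensional irreducible representations (otherwise a finite faithful direct sum would embed $\mathcal{A}$ into a finite-dimensional algebra) feeds the algebra into case $(2)$ of Theorem \ref{alt}, with Theorem \ref{MFnuclear} supplying the matching upper bound in the MF-nuclear case. This matches the paper's treatment, which states the corollary as an immediate consequence of Theorem \ref{alt} (whose proof of case $(2)$ is precisely the Corollary \ref{pig} estimate $\delta_{top}\geq 1-\frac{1}{\dim\rho_N(\mathcal{A})}$) together with Theorem \ref{MFnuclear}.
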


\begin{corollary}
Suppose $\mathcal{A}$ is a unital finitely generated $MF$ C*-algebra and $G$
is a finitely generated infinite abelian group and $\alpha:G\rightarrow
Aut\left(  \mathcal{A}\right)  $ is a group homomorphism. If
$\mathcal{A\rtimes}_{\alpha}G$ is $MF$, then, for every set $\left\{
x_{1},\ldots,x_{n}\right\}  $ of generators for $\mathcal{A\rtimes}_{\alpha}%
G$, we have
\[
\delta_{top}\left(  x_{1},\ldots,x_{n}\right)  \geq1.
\]
If, in addition, $\mathcal{A}$ is MF-nuclear, then%
\[
\delta_{top}\left(  x_{1},\ldots,x_{n}\right)  =1.
\]

\end{corollary}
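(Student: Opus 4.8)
The plan is to obtain the lower bound $\delta_{top}(x_1,\dots,x_n)\ge 1$ from Theorem \ref{alt} applied to $\mathcal B:=\mathcal A\rtimes_\alpha G$, and, when $\mathcal A$ is MF-nuclear, to obtain the matching upper bound $\le 1$ from Theorem \ref{MFnuclear} after verifying that $\mathcal B$ is MF-nuclear. Observe first that $\mathcal B$ is unital (since $\mathcal A$ is), is $MF$ by hypothesis, and, because $G$ is amenable, the full and reduced crossed products coincide, so $C^{\ast}(G)\cong C(\widehat G)$ sits inside $\mathcal B$ as a unital C*-subalgebra via the canonical unitaries $u_g$; also $\widehat G$ is infinite because $G$ is an infinite finitely generated abelian group.

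For the lower bound I will check that $\mathcal B$ satisfies one of the two alternatives in Theorem \ref{alt}. If $\mathcal B$ has no nonzero finite-dimensional representation, alternative (1) applies. Otherwise fix a finite-dimensional irreducible representation $\pi$ of $\mathcal B$ with $\pi(\mathcal B)=\mathcal M_d(\mathbb C)$, coming from a covariant pair $(\rho,v)$, $\rho=\pi|_{\mathcal A}$ and $v_g=\pi(u_g)$. For each $\lambda\in\widehat G$ the pair $(\rho,\lambda v)$, where $(\lambda v)_g=\lambda(g)v_g$, is again covariant (the scalar $\lambda(g)$ cancels in $\mathrm{Ad}(\lambda(g)v_g)$) and $g\mapsto\lambda(g)v_g$ is a unitary representation of $G$, so it integrates to a representation $\pi_\lambda$ of $\mathcal B$; since $\{\lambda(g)v_g\}$ and $\{v_g\}$ span the same finite-dimensional $\ast$-algebra, $\pi_\lambda(\mathcal B)=\pi(\mathcal B)=\mathcal M_d(\mathbb C)$, so $\pi_\lambda$ is irreducible. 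The key step is to count unitary equivalence classes among the $\pi_\lambda$: diagonalizing the commuting unitaries $v_g$ gives distinct characters $\chi_1,\dots,\chi_m\in\widehat G$ with $v_g=\sum_i\chi_i(g)q_i$, hence $(\lambda v)_g=\sum_i(\lambda\chi_i)(g)q_i$; so $\lambda v$ and $\lambda'v$ can be unitarily equivalent as $G$-representations (a necessary condition for $\pi_\lambda\cong\pi_{\lambda'}$, since an intertwiner must restrict to one on $C^{\ast}(G)$) only when $\{\lambda\chi_i\}=\{\lambda'\chi_i\}$ as subsets of $\widehat G$, which forces $\lambda^{-1}\lambda'\in\{\chi_j\chi_1^{-1}:1\le j\le m\}$, a finite set. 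Thus each equivalence class meets at most $m$ of the $\pi_\lambda$, and since $\widehat G$ is infinite, infinitely many inequivalent finite-dimensional irreducible representations of $\mathcal B$ occur; alternative (2) of Theorem \ref{alt} then applies. In either case $\delta_{top}(x_1,\dots,x_n)\ge 1$.

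For the upper bound, assume in addition that $\mathcal A$ is MF-nuclear; it suffices to show $\mathcal B$ is MF-nuclear and invoke Theorem \ref{MFnuclear}. Let $\tau\in\mathcal T_{MF}(\mathcal B)$ with GNS data $(\pi_\tau,H,e)$. By part (5) of Proposition \ref{mft}, $\tau|_{\mathcal A}\in\mathcal T_{MF}(\mathcal A)$, and arguing exactly as in the proof of parts (4) and (6) of the theorem on properties of MF-nuclearity — $e$ is separating for $\pi_\tau(\mathcal B)''$, hence for the subalgebra $\mathcal N:=\pi_\tau(\mathcal A)''$, so compression to $[\pi_\tau(\mathcal A)e]^{-}$ is a normal isomorphism of $\mathcal N$ onto $\pi_{\tau|_{\mathcal A}}(\mathcal A)''$ — MF-nuclearity of $\mathcal A$ gives that $\mathcal N$ is hyperfinite. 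Now $\pi_\tau(\mathcal B)''$ is generated by the injective algebra $\mathcal N$ together with the unitaries $U_g:=\pi_\tau(u_g)$, which normalize $\mathcal N$ and define a unitary representation of the amenable group $G$; hence there is a normal surjection from the von Neumann crossed product $\mathcal N\rtimes_{\mathrm{Ad}\,U}G$ — which is injective because $\mathcal N$ is injective and $G$ is amenable — onto $\pi_\tau(\mathcal B)''$, and injectivity passes to quotients, so $\pi_\tau(\mathcal B)''$ is hyperfinite. As $\tau$ was an arbitrary MF-trace on $\mathcal B$, the algebra $\mathcal B$ is MF-nuclear, so Theorem \ref{MFnuclear} gives $\delta_{top}(x_1,\dots,x_n)\le 1$, and equality follows.

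The step I expect to demand the most care is the representation-theoretic one in the lower bound: verifying that an intertwiner between $\pi_\lambda$ and $\pi_{\lambda'}$ must carry along the commutative subalgebra generated by the $u_g$'s and extracting from this the precise constraint on $\lambda^{-1}\lambda'$ that leaves only finitely many $\lambda$ in each equivalence class. On the nuclearity side, the only nonroutine ingredient is the classical fact (Connes) that the von Neumann algebra generated by an injective subalgebra and an amenable group of normalizing unitaries is again injective; this should be quotable directly.
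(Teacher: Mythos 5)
Your lower bound is correct and is essentially the paper's argument: both proofs take a finite-dimensional irreducible covariant pair and twist its group part by characters of $G$ to produce infinitely many inequivalent finite-dimensional irreducible representations, then invoke Theorem \ref{alt}. The only difference is how inequivalence is certified: the paper twists only along an infinite-order generator $u_{1}$ by $e^{2\pi ik\theta/d}$ with $\theta$ irrational and separates the representations by the determinant of the image of $u_{1}$, while you twist by arbitrary $\lambda\in\widehat{G}$ and separate them by the multiset of characters in the restriction to the canonical unitaries, showing each unitary equivalence class absorbs only finitely many $\lambda$. Both work; your intertwiner-restriction step is correct, the determinant trick just avoids it. For the second claim your route is genuinely different from, and stronger than, the paper's: the paper's proof only treats the case $\mathcal{A}$ nuclear (nuclearity passes to crossed products by amenable groups, then quote \cite{HLS}), even though the statement is phrased with MF-nuclear, whereas you prove that $\mathcal{A}\rtimes_{\alpha}G$ is MF-nuclear and apply Theorem \ref{MFnuclear}, which is what the statement as written actually requires. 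Your reduction -- $\tau|_{\mathcal{A}}\in\mathcal{T}_{MF}\left(\mathcal{A}\right)$ by Proposition \ref{mft}(5), then the compression-to-$\left[\pi_{\tau}\left(\mathcal{A}\right)e\right]^{-}$ argument to see that $\mathcal{N}=\pi_{\tau}\left(\mathcal{A}\right)^{\prime\prime}$ is hyperfinite -- is exactly the paper's own technique from the MF-nuclearity theorem and is fine.

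The one step that fails as stated is the claim that there is a normal surjection from the von Neumann crossed product $\mathcal{N}\rtimes_{\mathrm{Ad}\,U}G$ onto $\pi_{\tau}\left(\mathcal{B}\right)^{\prime\prime}$: von Neumann crossed products do not have this universal property. For example, with $\mathcal{N}=L^{\infty}\left(\mathbb{T}\right)$ acting on $L^{2}\left(\mathbb{T}\right)$ and $U$ the unitary of an irrational rotation, the crossed product is the hyperfinite $II_{1}$ factor, while $\left(\mathcal{N}\cup\left\{U\right\}\right)^{\prime\prime}=B\left(L^{2}\left(\mathbb{T}\right)\right)$, and a factor admits no proper normal quotients; a similar failure can even occur for the algebras arising from MF-traces (take $\mathcal{A}=\mathbb{C}$, $G=\mathbb{Z}$, and a trace on $C\left(\mathbb{T}\right)$ given by a singular measure). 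What saves the argument is precisely the fact you flag at the end: if $\mathcal{N}$ is injective and the $U_{g}$ form an amenable group of unitaries normalizing $\mathcal{N}$, then $\left(\mathcal{N}\cup\left\{U_{g}\right\}\right)^{\prime\prime}$ is injective -- compose a conditional expectation of $B\left(H\right)$ onto $\mathcal{N}^{\prime}$ with the invariant-mean average $T\mapsto m_{g}\left(U_{g}TU_{g}^{\ast}\right)$, which is a norm-one projection of $\mathcal{N}^{\prime}$ onto $\mathcal{N}^{\prime}\cap\left\{U_{g}\right\}^{\prime}=\left(\left(\mathcal{N}\cup\left\{U_{g}\right\}\right)^{\prime\prime}\right)^{\prime}$. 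Since $\hat{\tau}$ is a faithful normal trace on $\pi_{\tau}\left(\mathcal{B}\right)^{\prime\prime}$, Connes' theorem then converts injectivity into hyperfiniteness. With that substitution in place of the crossed-product surjection, your proof is complete.
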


\begin{proof}
Since $G$ is finitely generated, $G$ is a direct sum of cyclic groups, and
since $G$ is infinite, at least one of these cyclic summands must be infinite.
Thus $G$ has generators $u_{1},\ldots,u_{m}$ with $\left\vert u_{1}\right\vert
=\infty$, and $G$ is the direct sum of the cyclic groups generated by each
$u_{k}$. If $\mathcal{A\rtimes}_{\alpha}G$ has no finite-dimensional
representations, then Theorem \ref{lower} implies $\delta_{top}\left(
x_{1},\ldots,x_{n}\right)  \geq1$. Suppose $\pi:\mathcal{A\rtimes}_{\alpha
}G\rightarrow\mathcal{M}_{d}\left(  \mathbb{C}\right)  $ is an irreducible
representation. Suppose $\theta$ is an irrational number in $\left[
0,1\right]  $. For each positive integer $k$, define a group homomorphism
$\rho_{k}:G\rightarrow\left\{  \lambda\in\mathbb{C}:\left\vert \lambda
\right\vert =1\right\}  $ by $\rho_{k}\left(  u_{1}\right)  =e^{2\pi
ik\theta/d}$ and $\rho_{k}\left(  u_{j}\right)  =1$ for $2\leq j\leq m$. We
then define a unitary group representation $\tau_{k}:G\rightarrow
\mathcal{M}_{d}\left(  \mathbb{C}\right)  $ by
\[
\tau_{k}\left(  u\right)  =\rho_{k}\left(  u\right)  \pi\left(  u\right)  .
\]
Since, for every $a\in\mathcal{A}$ and every $u\in G$, we have%
\[
\tau_{k}\left(  u\right)  \pi\left(  a\right)  \tau_{k}\left(  u\right)
^{\ast}=\pi\left(  uau^{\ast}\right)  =\alpha\left(  u\right)  \left(
a\right)  ,
\]
it follows from the defining property of the crossed product that there is a
representation $\pi_{k}:\mathcal{A\rtimes}_{\alpha}G\rightarrow\mathcal{M}%
_{d}\left(  \mathbb{C}\right)  $ such that $\pi_{k}|\mathcal{A=\pi}$ and
$\pi_{k}|G=\tau_{k}$. It is clear that the range of $\pi_{k}$ equals the range
of $\pi,$ so each $\pi_{k}$ is irreducible. Since $\mathrm{\det}\left(
\pi_{k}\left(  u_{1}\right)  \right)  =e^{2\pi ik\theta}\mathrm{\det}\left(
\pi\left(  u_{1}\right)  \right)  ,$ it follows that no two of the $\pi_{k}$'s
are unitarily equivalent. Again, from Theorem \ref{alt}, we conclude
$\delta_{top}\left(  x_{1},\ldots,x_{n}\right)  \geq1$.

If $\mathcal{A}$ is nuclear, then $\mathcal{A\rtimes}_{\alpha}G$ is nuclear,
so, by \cite{HLS}, $\delta_{top}\left(  x_{1},\ldots,x_{n}\right)  \leq1;$
whence $\delta_{top}\left(  x_{1},\ldots,x_{n}\right)  =1$.
\end{proof}

\begin{corollary}
If $\mathcal{A}$ is a simple, MF-nuclear C*-algebra, then, for any generating
set $\left\{  x_{1},\ldots,x_{n}\right\}  $ of $\mathcal{A}$, we have%
\[
\delta_{top}\left(  x_{1},\ldots,x_{n}\right)  =1-\frac{1}{\dim_{\mathbb{C}%
}\mathcal{A}}.
\]

\end{corollary}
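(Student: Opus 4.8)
The plan is to split into two cases according to whether $\dim_{\mathbb{C}}\mathcal{A}$ is finite or infinite, and in each case to assemble a matching pair of bounds from results already established.

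\textbf{Case 1: $\dim_{\mathbb{C}}\mathcal{A}=d<\infty$.} Here I would simply invoke the result of \cite{HS3} recalled just before the first theorem of Section 3.2: if $\dim C^{\ast}(x_{1},\ldots,x_{n})=d<\infty$, then $\delta_{top}(x_{1},\ldots,x_{n})=1-\frac{1}{d}=1-\frac{1}{\dim_{\mathbb{C}}\mathcal{A}}$. Simplicity is not actually needed for this step; it only pins down $\mathcal{A}\cong\mathcal{M}_{m}(\mathbb{C})$ with $d=m^{2}$. (A finite-dimensional C*-algebra is automatically MF and nuclear, so the standing hypotheses are consistent.)

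\textbf{Case 2: $\dim_{\mathbb{C}}\mathcal{A}=\infty$.} Then the asserted value $1-\frac{1}{\dim_{\mathbb{C}}\mathcal{A}}$ is read as $1$, so the goal is $\delta_{top}(x_{1},\ldots,x_{n})=1$. For the upper bound, since $\mathcal{A}$ is MF-nuclear, Theorem \ref{MFnuclear} gives $\delta_{top}(x_{1},\ldots,x_{n})\le 1$ at once. For the lower bound, the key observation is that a simple infinite-dimensional (unital) C*-algebra has no nonzero finite-dimensional representation: if $\pi:\mathcal{A}\to\mathcal{M}_{d}(\mathbb{C})$ is a nonzero $\ast$-homomorphism, then $\ker\pi\in\{0,\mathcal{A}\}$ by simplicity and $\ker\pi\neq\mathcal{A}$, so $\pi$ is injective and $\mathcal{A}$ embeds into $\mathcal{M}_{d}(\mathbb{C})$, forcing $\dim_{\mathbb{C}}\mathcal{A}\le d^{2}<\infty$, a contradiction. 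Since an MF-nuclear algebra is in particular MF (the very definition of $\mathcal{T}_{MF}(\mathcal{A})$ presupposes this), hypothesis $(1)$ of Theorem \ref{alt} applies and yields $\delta_{top}(x_{1},\ldots,x_{n})\ge 1$. Combining the two bounds gives $\delta_{top}(x_{1},\ldots,x_{n})=1$, as desired.

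\textbf{Main obstacle.} There is essentially no analytic content here: the proof is a short bookkeeping argument combining Theorem \ref{MFnuclear}, Theorem \ref{alt}, and the finite-dimensional formula of \cite{HS3}. The only points that require a moment's care are (i) confirming that ``MF-nuclear'' carries the standing assumption ``MF'', so that $\delta_{top}$ is defined and Theorem \ref{alt} is applicable, and (ii) reading the formula $1-\frac{1}{\dim_{\mathbb{C}}\mathcal{A}}$ with the convention (used throughout the paper, e.g.\ in Corollary \ref{rfd2}) that it equals $1$ in the infinite-dimensional case.
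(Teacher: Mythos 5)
Your proof is correct and follows essentially the same route as the paper: the finite-dimensional case is the formula from \cite{HS3}, and in the infinite-dimensional case simplicity rules out finite-dimensional representations, so Theorem \ref{alt} gives $\delta_{\text{top}}(x_{1},\ldots,x_{n})\geq 1$ while MF-nuclearity gives $\delta_{\text{top}}(x_{1},\ldots,x_{n})\leq 1$. If anything, your appeal to Theorem \ref{MFnuclear} for the upper bound is slightly more precise than the paper's wording, which at that point cites nuclearity and \cite{HLS} although only MF-nuclearity is assumed.
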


\begin{proof}
We know the conclusion is true if $\mathcal{A}$ is finite-dimensional. If
$\mathcal{A}$ is infinite-dimensional, the simplicity of $\mathcal{A}$ implies
that $\mathcal{A}$ has no finite-dimensional representations, so, by Theorem
\ref{lower}, $\delta_{top}\left(  x_{1},\ldots,x_{n}\right)  \geq1$. Since
$\mathcal{A}$ is nuclear, we know from \cite{HLS} that $\delta_{top}\left(
x_{1},\ldots,x_{n}\right)  \leq1$ . Hence%
\[
\delta_{top}\left(  x_{1},\ldots,x_{n}\right)  =1=1-\frac{1}{\dim_{\mathbb{C}%
}\mathcal{A}}.
\]
\bigskip
\end{proof}

\begin{corollary}
Suppose $X$ is an infinite compact metric space and $\mathcal{B}$ is a
finitely generated unital MF C*-algebra. Then, for every generating set
$\left\{  x_{1},\ldots,x_{n}\right\}  $ for $C\left(  X\right)  \otimes
\mathcal{B}$, we have%
\[
\delta_{top}\left(  x_{1},\ldots,x_{n}\right)  \geq1.
\]
If, in addition, $\mathcal{B}$ is MF-nuclear, then%
\[
\delta_{top}\left(  x_{1},\ldots,x_{n}\right)  =1.
\]

\end{corollary}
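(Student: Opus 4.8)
The plan is to obtain the lower bound $\delta_{top}\ge 1$ by checking the hypotheses of Theorem \ref{alt} for $\mathcal{A}=C(X)\otimes\mathcal{B}$, and to obtain the matching upper bound (when $\mathcal{B}$ is MF-nuclear) from Theorem \ref{MFnuclear}. First I would record the preliminaries: since $C(X)$ is abelian it is nuclear, so $C(X)\otimes\mathcal{B}=C(X)\otimes_{\min}\mathcal{B}$; moreover $C(X)$ is RFD (the point evaluations are one-dimensional representations separating the points of the compact metric space $X$), hence MF, and it is exact. Therefore, by \cite[Proposition 3.1]{HS4}, $\mathcal{A}=C(X)\otimes\mathcal{B}$ is an MF C*-algebra; together with the hypothesis that $\{x_1,\dots,x_n\}$ generates the unital algebra $\mathcal{A}$, this makes $\delta_{top}(x_1,\dots,x_n)$ well defined.

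The key structural observation is that $C(X)\otimes 1$ lies in the \emph{center} of $\mathcal{A}$. Consequently, in any finite-dimensional irreducible representation $\sigma\colon\mathcal{A}\to\mathcal{M}_d(\mathbb{C})$ the image $\sigma(C(X)\otimes 1)$ consists of scalars, so there is a point $x_\sigma\in X$ with $\sigma(f\otimes 1)=f(x_\sigma)I_d$ for every $f\in C(X)$, and hence $\sigma(f\otimes b)=f(x_\sigma)\,\tau(b)$ on elementary tensors, where $\tau:=\sigma|_{1\otimes\mathcal{B}}$ is an irreducible finite-dimensional representation of $\mathcal{B}$. I would then argue by a dichotomy. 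If $\mathcal{A}$ has no finite-dimensional representation, Theorem \ref{alt}(1) applies and yields $\delta_{top}(x_1,\dots,x_n)\ge 1$. Otherwise, decomposing a finite-dimensional representation of $\mathcal{A}$ into irreducibles produces such a $\sigma$, hence an irreducible finite-dimensional representation $\tau$ of $\mathcal{B}$; for each $x\in X$ set $\sigma_x=\mathrm{ev}_x\otimes\tau\colon\mathcal{A}\to\mathcal{M}_d(\mathbb{C})$, which is again irreducible. If $x\ne y$, choose $f\in C(X)$ with $f(x)\ne f(y)$; then $\sigma_x(f\otimes 1)=f(x)I_d$ and $\sigma_y(f\otimes 1)=f(y)I_d$ are distinct scalar matrices, and since scalar matrices are invariant under unitary conjugation, $\sigma_x$ and $\sigma_y$ are not unitarily equivalent. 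As $X$ is infinite, $\mathcal{A}$ has infinitely many pairwise non-equivalent finite-dimensional irreducible representations, so Theorem \ref{alt}(2) again gives $\delta_{top}(x_1,\dots,x_n)\ge 1$.

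For the final assertion, assume in addition that $\mathcal{B}$ is MF-nuclear. Since $C(X)$ is nuclear it is exact, and being abelian it is trivially MF-nuclear (every $\pi_\tau(C(X))''$ is abelian, hence hyperfinite); therefore, by the tensor-product permanence of MF-nuclearity (part (5) of the theorem on properties of MF-nuclearity), $\mathcal{A}=C(X)\otimes_{\min}\mathcal{B}$ is MF-nuclear. Theorem \ref{MFnuclear} then gives $\delta_{top}(x_1,\dots,x_n)\le 1$, and combining with the previous paragraph we conclude $\delta_{top}(x_1,\dots,x_n)=1$.

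I do not expect a deep obstacle here, since everything is bootstrapped from Theorems \ref{alt} and \ref{MFnuclear}. The one point that genuinely requires care is that $\mathcal{B}$ itself may have no finite-dimensional representation (for instance if $\mathcal{B}$ is simple and infinite-dimensional), so one cannot produce the infinitely many irreducible representations unconditionally; the dichotomy built into Theorem \ref{alt} — falling back on part (1) when $\mathcal{A}$ has no finite-dimensional representation — is exactly what handles this, and the centrality of $C(X)\otimes 1$ is what makes the complementary case routine.
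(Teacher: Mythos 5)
Your proof is correct and follows essentially the same route as the paper: a dichotomy on the existence of finite-dimensional representations, with point evaluations on the infinite space $X$ tensored against a fixed finite-dimensional irreducible representation of $\mathcal{B}$ producing infinitely many inequivalent irreducibles (distinguished by the central scalars $\sigma_x(f\otimes 1)=f(x)I$), so that Theorem \ref{alt} gives $\delta_{top}\geq 1$, and MF-nuclearity of $C(X)\otimes\mathcal{B}$ together with Theorem \ref{MFnuclear} gives the matching upper bound. You merely make explicit some steps the paper leaves implicit (that $C(X)\otimes\mathcal{B}$ is MF via exactness of $C(X)$, and the irreducibility/inequivalence verification), which is fine.
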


\begin{proof}
If $\mathcal{B}$ has no finite-dimensional representations, then neither does
\newline$C\left(  X\right)  \otimes\mathcal{B}$. On the other hand if
$\mathcal{B}$ has an irreducible finite-dimensional representation, then since
$X$ is infinite, $C\left(  X\right)  \otimes\mathcal{B}$ has infinitely many
inequivalent irreducible finite-dimensional representations; by Theorem
\ref{alt}, $\delta_{top}\left(  x_{1},\ldots,x_{n}\right)  \geq1$.
\end{proof}

\bigskip

\begin{theorem}
If $\mathcal{A}=C^{\ast}\left(  x_{1},\ldots,x_{n}\right)  $ is an MF-nuclear
algebra and RFD, then%
\[
\delta_{\text{\textrm{top}}}\left(  x_{1},\ldots,x_{n}\right)  =1-\frac
{1}{\dim_{\mathbb{C}}\left(  \mathcal{A}/\mathcal{J}_{MF}\right)  }.
\]

\end{theorem}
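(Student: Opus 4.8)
The plan is to reduce the statement to Corollary \ref{rfd2} by observing that for an RFD algebra the MF-ideal is trivial. First I would show $\mathcal{J}_{MF}(\mathcal{A})=\{0\}$. Let $a\in\mathcal{A}$ with $a\neq 0$. Since $\mathcal{A}$ is RFD, the finite-dimensional representations of $\mathcal{A}$ separate its points, so there is a finite-dimensional unital $\ast$-homomorphism $\rho:\mathcal{A}\to\mathcal{B}$ with $\rho(a)\neq 0$, where $\mathcal{B}$ is a finite-dimensional C*-algebra; choosing a faithful tracial state $\rho_{0}$ on $\mathcal{B}$, the composition $\tau=\rho_{0}\circ\rho$ is a finite-dimensional tracial state on $\mathcal{A}$, hence an MF-trace by part $(2)$ of Proposition \ref{mft}, and $\tau(a^{\ast}a)=\rho_{0}(\rho(a)^{\ast}\rho(a))>0$. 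Thus $a\notin\mathcal{J}_{MF}(\mathcal{A})$, so $\mathcal{J}_{MF}(\mathcal{A})=\{0\}$ and $\mathcal{A}/\mathcal{J}_{MF}(\mathcal{A})=\mathcal{A}$.

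With this reduction the asserted formula is exactly $\delta_{top}(x_{1},\ldots,x_{n})=1-1/\dim_{\mathbb{C}}\mathcal{A}$, which is the equality clause of Corollary \ref{rfd2} (applicable since $\mathcal{A}$ is MF-nuclear). For completeness I would recall the argument behind that clause. If $\dim_{\mathbb{C}}\mathcal{A}=d<\infty$, the formula is the result of \cite{HS3}. If $\mathcal{A}$ is infinite-dimensional, Theorem \ref{MFnuclear} gives $\delta_{top}(x_{1},\ldots,x_{n})\leq 1$, while for the reverse inequality I would argue that an infinite-dimensional RFD C*-algebra necessarily has infinitely many pairwise non-unitarily-equivalent finite-dimensional irreducible representations: if these were only finitely many, say $\pi_{1},\ldots,\pi_{N}$, then since finite-dimensional C*-algebras are semisimple every finite-dimensional representation is a direct sum of copies of the $\pi_{j}$, so the map $\pi_{1}\oplus\cdots\oplus\pi_{N}$ would be injective on $\mathcal{A}$ by the RFD hypothesis, forcing $\dim_{\mathbb{C}}\mathcal{A}<\infty$, a contradiction. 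Theorem \ref{alt} then yields $\delta_{top}(x_{1},\ldots,x_{n})\geq 1$, and hence $\delta_{top}(x_{1},\ldots,x_{n})=1=1-1/\dim_{\mathbb{C}}\mathcal{A}$.

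I expect the only genuinely non-formal step to be the last one, namely extracting infinitely many inequivalent finite-dimensional irreducible representations from an infinite-dimensional RFD algebra; everything else is bookkeeping with Proposition \ref{mft}, Theorem \ref{alt}, and Theorem \ref{MFnuclear}. Even that step is routine once one invokes semisimplicity of finite-dimensional C*-algebras together with the point-separation built into the definition of RFD, so the proof will be short.
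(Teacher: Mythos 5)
Your proposal is correct and follows essentially the route the paper intends: the theorem is stated without a separate proof precisely because RFD forces $\mathcal{J}_{MF}(\mathcal{A})=\{0\}$ (the paper records this later as every RFD algebra lying in $\mathcal{W}$), so the formula reduces to the equality clause of Corollary \ref{rfd2}, which in turn rests on the finite-dimensional formula from \cite{HS3}, the lower bound from Theorem \ref{alt} via infinitely many inequivalent finite-dimensional irreducible representations, and the upper bound from Theorem \ref{MFnuclear}. Your verification that an infinite-dimensional RFD algebra has infinitely many inequivalent finite-dimensional irreducibles, and that finite-dimensional traces coming from separating representations kill $\mathcal{J}_{MF}$ via Proposition \ref{mft}, matches the paper's implicit argument.
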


\bigskip

\begin{example}
Suppose $\mathcal{B}$ is a unital separable $MF$ C*-algebra that is not
nuclear, e.g., $\mathcal{B}=C_{r}^{\ast}\left(  \mathbb{F}_{2}\right)  $, and
let $\mathcal{J}=\mathcal{B}\otimes\mathcal{K}\left(  \ell^{2}\right)  $. Then
$\mathcal{J}$ is singly generated \cite{OZ}, and every tracial state vanishes
on $\mathcal{J}$. Let $\mathcal{J}^{+}$ be the C*-algebra obtained by
adjoining the identity to $\mathcal{J}$ and suppose $\mathcal{N}$ is a
finitely generated nuclear MF C*-algebra. Then $\mathcal{A}=\mathcal{N}%
\otimes\mathcal{J}^{+}$ is finitely generated and MF, but not nuclear.
However, $1\otimes\mathcal{J}^{+}\subseteq\mathcal{J}_{MF}\left(
\mathcal{A}\right)  $, so
\[
\mathcal{J}_{MF}\left(  \mathcal{A}\right)  =\mathcal{J}_{MF}\left(
\mathcal{N}\right)  \otimes\mathcal{J}.
\]
Thus $\mathcal{A}/\mathcal{J}_{MF}\left(  \mathcal{A}\right)  $ is isomorphic
to $\mathcal{N}/\mathcal{J}_{MF}\left(  \mathcal{N}\right)  $, which is
nuclear. Hence, $\mathcal{A}$ is MF-nuclear, so for every set $\left\{
x_{1},\ldots,x_{n}\right\}  $ of generators of $\mathcal{A}$, we have%
\[
\delta_{\text{\textrm{top}}}\left(  x_{1},\ldots,x_{n}\right)  \leq1.
\]

\end{example}

\section{Special Classes of C*-algebras}

In this last section we consider two classes of C*-algebras that are important
in our work

\subsection{The Class $\mathcal{S}$}

We now consider the class $\mathcal{S}$ of separable $MF$ C*-algebras for
which every trace is an MF-trace, i.e., $\mathcal{TS}\left(  \mathcal{A}%
\right)  =\mathcal{T}_{MF}\left(  \mathcal{A}\right)  $. Recall that an $AH$
C*-algebra is a direct limit of subalgebras of finite direct sums of
commutative C*-algebras tensored with matrix algebras.

\bigskip

\begin{theorem}
The following are true for $MF$ C*-algebras $\mathcal{A}=C^{\ast}\left(
x_{1}\ldots,x_{s}\right)  $, $\mathcal{B}$ $=C^{\ast}\left(  y_{1}%
,\ldots,y_{t}\right)  $.
\end{theorem}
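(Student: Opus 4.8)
The items collected in this theorem are stability and membership properties of the class $\mathcal{S}$, and I would prove essentially all of them through one mechanism. By Lemma~\ref{trace}, to show that a tracial state $\tau$ on an $MF$ algebra $\mathcal{C}=C^{\ast}(z_{1},\ldots,z_{r})$ is an $MF$-trace it suffices, for every $\varepsilon>0$ and every finite family $\mathcal{F}$ of $\ast$-polynomials, to produce a single matrix tuple $\vec{B}\in\mathcal{M}_{l}(\mathbb{C})^{r}$ with $|\,\|p(\vec{B})\|-\|p(\vec{z})\|\,|<\varepsilon$ and $|\,\tau_{l}(p(\vec{B}))-\tau(p(\vec{z}))\,|<\varepsilon$ for all $p\in\mathcal{F}$. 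The tools for building such tuples are: weak*-compactness and convexity of $\mathcal{T}_{MF}$ (Proposition~\ref{mft}(1)), used to replace any weight that appears by a rational one; the block-diagonal and amplification constructions from the proof of Proposition~\ref{mft}; the fact that finite-dimensional traces are $MF$-traces (Proposition~\ref{mft}(2)); the permanence of $MF$-traces under restriction to subalgebras (Proposition~\ref{mft}(5)); the ultraproduct description (Proposition~\ref{mft}(4)); and the Remark that $\vec{A}_{k}\overset{t.d.}{\longrightarrow}\vec{x}$ forces $\|\varphi(\vec{A}_{k})\|\to\|\varphi(\vec{x})\|$ for every noncommutative continuous function $\varphi$, which together with norm-continuity of traces also gives $\tau_{m_{k}}(\varphi(\vec{A}_{k}))\to\tau(\varphi(\vec{x}))$.

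For the structural items (direct sums, matrix amplifications, and any corner/quotient that appears) I would first observe that $\mathcal{A}\oplus\mathcal{B}$ and $\mathcal{M}_{n}(\mathcal{A})$ are $MF$ precisely when the relevant pieces are, then decompose an arbitrary trace through the structure and invoke the hypothesis on the pieces: a trace on $\mathcal{A}\oplus\mathcal{B}$ is $t\,\alpha\oplus(1-t)\beta$ with $\alpha,\beta$ $MF$-traces, and after reducing $t$ to $p/q$ by Proposition~\ref{mft}(1) the tuple $\vec{A}_{k}^{(p\,s_{k})}\oplus\vec{B}_{k}^{(q\,m_{k})}$ (as in the convexity argument there) models it, the operator norm of a $\ast$-polynomial in the direct-sum generators being the maximum of the two block norms; a trace on $\mathcal{M}_{n}(\mathcal{A})=\mathcal{M}_{n}(\mathbb{C})\otimes\mathcal{A}$ is forced to be $\tau_{n}\otimes\tau$ (unique trace on $\mathcal{M}_{n}(\mathbb{C})$) and is modeled exactly by $(e_{ij}\otimes 1_{m_{k}},\,1_{n}\otimes A_{rk})$. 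The converse directions follow from Proposition~\ref{mft}(5).

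For the membership items I expect the theorem to include that every subhomogeneous (in particular every commutative) $MF$ C*-algebra, and more generally every $AH$ C*-algebra, lies in $\mathcal{S}$; the $AH$ case uses a direct-limit closure statement, which I would prove as well. For a subhomogeneous $\mathcal{A}$ (automatically $RFD$, hence $MF$) each extreme tracial state $\tau$ has $\pi_{\tau}(\mathcal{A})^{\prime\prime}$ a type $I_{m}$ factor with $m$ at most the subhomogeneity degree, hence is finite-dimensional and thus an $MF$-trace by Proposition~\ref{mft}(2); since $\mathcal{T}_{MF}(\mathcal{A})$ is weak*-closed and convex, Krein--Milman gives $\mathcal{TS}(\mathcal{A})=\mathcal{T}_{MF}(\mathcal{A})$. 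For a direct limit $\mathcal{A}=\overline{\bigcup_{n}\mathcal{A}_{n}}$ of algebras in $\mathcal{S}$ (which is $MF$ by \cite{BK}) and a trace $\tau$ on $\mathcal{A}$, given $\mathcal{F}$ and $\varepsilon$ I would approximate the generators $\vec{x}$ of $\mathcal{A}$ by elements $\vec{x}^{\prime}=\vec{\psi}(\vec{z})$ of some $\mathcal{A}_{n}$ with $\vec{\psi}$ noncommutative continuous in the generators $\vec{z}$ of $\mathcal{A}_{n}$, apply $\tau|_{\mathcal{A}_{n}}\in\mathcal{T}_{MF}(\mathcal{A}_{n})$ to get matrix models $\vec{B}_{k}$ of $\vec{z}$, and check via the Remark that $\vec{C}_{k}=\vec{\psi}(\vec{B}_{k})$ meets the Lemma~\ref{trace} estimates. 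Since a finite direct sum of tensor products of commutative algebras with matrix algebras is subhomogeneous, and subalgebras of subhomogeneous algebras are subhomogeneous, these two facts give the $AH$ statement.

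The one genuinely delicate assertion is the tensor-product statement, which I expect reads: if $\mathcal{A},\mathcal{B}\in\mathcal{S}$ and one of them is exact, then $\mathcal{A}\otimes_{\min}\mathcal{B}\in\mathcal{S}$. Here a tracial state $\omega$ on $\mathcal{A}\otimes_{\min}\mathcal{B}$ need not be a tensor product of traces, so Proposition~\ref{mft}(6) (which handles the product case) does not apply directly and one cannot simply split $\omega$ and glue matrix models. Instead I would pass to the GNS data $(\pi_{\omega},H,e)$, set $M=\pi_{\omega}(\mathcal{A}\otimes_{\min}\mathcal{B})^{\prime\prime}$ with its faithful trace $\hat{\omega}$, and note that $M$ is generated by the commuting subalgebras $\pi_{\omega}(\mathcal{A}\otimes 1)^{\prime\prime}$ and $\pi_{\omega}(1\otimes\mathcal{B})^{\prime\prime}$, whose associated traces are $MF$-traces by hypothesis and therefore, by Proposition~\ref{mft}(4), embed trace-preservingly into matricial ultraproducts. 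The crux, and the step I expect to be the main obstacle, is to merge these two embeddings into a single trace-preserving $\ast$-homomorphism of $\mathcal{A}\otimes_{\min}\mathcal{B}$ into one $\prod^{\alpha}\mathcal{M}_{k}(\mathbb{C})$, with commuting images, compatibly with the given joint trace $\omega$; this is where exactness is used, through the $MF$-ness of $\mathcal{A}\otimes_{\min}\mathcal{B}$ (\cite[Proposition~3.1]{HS4}) and the argument in \cite[Proposition~3.2]{HS4} that already underlies Proposition~\ref{mft}(6). Once such a homomorphism is in hand, Proposition~\ref{mft}(4) read backwards gives $\omega\in\mathcal{T}_{MF}(\mathcal{A}\otimes_{\min}\mathcal{B})$, and everything else is block-matrix bookkeeping with the established parts of Proposition~\ref{mft}.
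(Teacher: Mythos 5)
Most of your plan (direct sums, matrix amplifications, commutative and AH algebras, direct limits) runs parallel to the paper's proof: the paper also works through Lemma~\ref{trace}, the block-diagonal constructions from Proposition~\ref{mft}, and the Krein--Milman reduction to extreme (= factor, by \cite{HM}) tracial states, which is in fact stated as item (1) of the theorem. Your AH argument via subhomogeneity of subalgebras is a legitimate variant of the paper's route, which instead goes through the separate assertion (item (7)) that $\mathcal{B}\in\mathcal{S}$ whenever $\mathcal{B}\subseteq\mathcal{A}$ and every factor trace on $\mathcal{A}$ is finite-dimensional; that item, and the type I item (9), are part of the theorem but are not covered by your proposal, and (7) needs an ingredient you do not mention, namely the factorial Stone--Weierstrass theorem of Longo and Popa, which extends a factor trace on $\mathcal{B}$ to a factor \emph{state} on $\mathcal{A}$.

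The genuine gap is in the tensor-product item, and it is exactly the step you flag but do not carry out. Merging two trace-preserving embeddings of $\pi_{\omega}(\mathcal{A}\otimes1)''$ and $\pi_{\omega}(1\otimes\mathcal{B})''$ into a single matricial ultraproduct with commuting ranges, compatibly with the \emph{joint} trace $\omega$, is not something Proposition~\ref{mft}(4) or the MF-ness of $\mathcal{A}\otimes_{\min}\mathcal{B}$ gives you: for a general trace the value of $\omega$ on products $ab$ is not determined by its restrictions to the two subalgebras, so there is nothing to make the two separate microstate models interact correctly, and no argument is offered for why such a merged homomorphism exists. The paper sidesteps this entirely by the same Krein--Milman reduction you use elsewhere: by item (1) it suffices to treat a \emph{factor} tracial state $\tau$ on $\mathcal{A}\otimes_{\nu}\mathcal{B}$. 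For such a $\tau$ the commutants force $\pi_{\tau}(\mathcal{A}\otimes1)''$ and $\pi_{\tau}(1\otimes\mathcal{B})''$ to be factors, and a short argument with projections of trace $\frac{1}{n}$ and partial isometries inside $\pi_{\tau}(\mathcal{A}\otimes1)''$ shows $\tau(ab)=\tau(a)\tau(b)$ for commuting $a,b$ from the two subalgebras; hence $\tau=\alpha\otimes\beta$ \emph{is} a product trace after all, and Proposition~\ref{mft}(6) applies directly (this is also where exactness enters, through the MF-ness of the minimal tensor product). So the obstacle you describe dissolves once you restrict to extreme traces; without that reduction, your outline leaves the key step unproved.
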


\begin{enumerate}
\item $\mathcal{A}\in\mathcal{S}$ if and only if every factor tracial state on
$\mathcal{A}$ is an MF-trace.

\item $\mathcal{A},\mathcal{B}\in\mathcal{S}$ if and only if $\mathcal{A}%
\oplus\mathcal{B}\in\mathcal{S}$.

\item If $\mathcal{A},\mathcal{B}\in\mathcal{S}$ and one of $\mathcal{A}$ and
$\mathcal{B}$ is exact, $\nu$ is a C*-tensor norm such that $\mathcal{A}%
\otimes_{\nu}\mathcal{B}$ is MF, then $\mathcal{A}\otimes_{\nu}\mathcal{B}%
\in\mathcal{S}$.

\item $\mathcal{A}\in\mathcal{S}$ if and only if $\mathcal{A}\otimes
\mathcal{M}_{n}\left(  \mathbb{C}\right)  \in\mathcal{S}$ for every $n\geq1$.

\item $\mathcal{S}$ is closed under direct (inductive) limits.

\item Every separable commutative C*-algebra is in $\mathcal{S}$.

\item If every factor tracial state on $\mathcal{A}$ is finite-dimensional,
and $\mathcal{B}\subseteq\mathcal{A}$, then $\mathcal{B}\in\mathcal{S}$.

\item Every $AH$ C*-algebra is in $\mathcal{S}$.

\item Every type $I$ MF C*-algebra $\mathcal{A}$ is in $\mathcal{S}$.
\end{enumerate}

\begin{proof}
$\left(  1\right)  $. It was shown in \cite{HM} that the set of factor tracial
states is the set of extreme points of $\mathcal{TS}\left(  \mathcal{A}%
\right)  $. Since $\mathcal{T}_{MF}\left(  \mathcal{A}\right)  $ is compact
and convex, $\left(  1\right)  $ follows from the Krein-Milman theorem.

$\left(  2\right)  .$ This is obvious from $\left(  1\right)  $ since if
$\tau$ is a factor tracial state, then $\pi_{\tau}\left(  1\oplus0\right)  $
is $1$ or $0$.

$\left(  3\right)  .$ Suppose $\tau$ is a factor trace on $\mathcal{A}%
\otimes_{\nu}\mathcal{B}$, and let $\left(  \pi_{\tau},H,e\right)  $ be the
GNS construction for $\tau$. Then $\pi_{\tau}\left(  \mathcal{A}\otimes_{\nu
}\mathcal{B}\right)  ^{\prime\prime}$ is a factor von Neumann algebra with the
trace $\hat{\tau}$. Since $\pi_{\tau}\left(  \mathcal{A\otimes}1\right)
^{\prime\prime}$ commutes with $\pi_{\tau}\left(  1\otimes\mathcal{B}\right)
^{\prime\prime}$, it follows that their centers are contained in the center of
$\pi_{\tau}\left(  \mathcal{A}\otimes_{\nu}\mathcal{B}\right)  ^{\prime\prime
}$. Hence, both $\pi_{\tau}\left(  \mathcal{A\otimes}1\right)  ^{\prime\prime
}$ and $\pi_{\tau}\left(  1\otimes\mathcal{B}\right)  ^{\prime\prime}$ are
factors. Let $\alpha$, $\beta,$ respectively, be the restriction of $\tau$ to
$\pi_{\tau}\left(  \mathcal{A\otimes}1\right)  ^{\prime\prime}$, $\pi_{\tau
}\left(  1\otimes\mathcal{B}\right)  ^{\prime\prime}.$ Moreover, there are
traces $\alpha\in\mathcal{TS}\left(  \mathcal{A}\right)  $ and $\beta
\in\mathcal{TS}\left(  \mathcal{B}\right)  $ such that, for all $a\in
\mathcal{A}$ and $b\in\mathcal{B}$,
\[
\tau\left(  a\otimes1\right)  =\tau\left(  \pi_{\tau}\left(  a\otimes1\right)
\right)  =\alpha\left(  a\right)  ,\text{ and}%
\]%
\[
\tau\left(  1\otimes b\right)  =\tau\left(  \pi_{\tau}\left(  1\otimes
b\right)  \right)  =\beta\left(  b\right)  .
\]
Suppose $p$ is a projection in $\pi_{\tau}\left(  \mathcal{A\otimes}1\right)
^{\prime\prime}$ and $\tau\left(  \pi_{\tau}\left(  a\otimes1\right)  \right)
=\alpha\left(  p\right)  =\frac{1}{n}$. Then there are projections
$p_{2},\ldots,p_{n}\in\pi_{\tau}\left(  \mathcal{A\otimes}1\right)
^{\prime\prime}$ such that $p+p_{2}+\cdots+p_{n}=1$ and there are partial
isometries $v_{2},\ldots,v\,_{n}\in\pi_{\tau}\left(  \mathcal{A\otimes
}1\right)  ^{\prime\prime}$ such that $v_{j}v_{j}^{\ast}=p$ and $v_{j}^{\ast
}v_{j}=p_{j}$ for $2\leq j\leq n$. It follows, for any $b\in\pi_{\tau}\left(
1\otimes\mathcal{B}\right)  ^{\prime\prime},$ that%
\[
\tau\left(  p_{j}b\right)  =\tau\left(  v_{j}^{\ast}\left(  v_{j}b\right)
\right)  =\tau\left(  v_{j}^{\ast}\left(  bv_{j}\right)  \right)  =\tau\left(
bv_{j}v_{j}^{\ast}\right)  =\tau\left(  bp\right)  =\tau\left(  pb\right)  .
\]
Hence
\[
\tau\left(  1b\right)  =\tau\left(  \left(  p+p_{2}+\cdots+p_{n}\right)
b\right)  =n\tau\left(  pb\right)  ,
\]
which implies%
\[
\tau\left(  pb\right)  =\tau\left(  p\right)  \tau\left(  b\right)  .
\]
It follows that
\[
\tau\left(  ab\right)  =\tau\left(  a\right)  \tau\left(  b\right)
\]
for every $a\in\pi_{\tau}\left(  \mathcal{A\otimes}1\right)  ^{\prime\prime}$
and every $b\in\pi_{\tau}\left(  1\otimes\mathcal{B}\right)  ^{\prime\prime}$.
Whence, on $\mathcal{A}\otimes_{\nu}\mathcal{B}$ the trace $\tau=\alpha
\otimes\beta$. Since $\mathcal{A},\mathcal{B}\in\mathcal{S}$, we know that
$\alpha$ and $\beta$ are MF-traces. It follows from part $\left(  6\right)  $
of Proposition \ref{mft} that $\tau=\alpha\otimes\beta$ is an MF-trace on
$\mathcal{A}\otimes\mathcal{B}$. It follows from statement $\left(  1\right)
$ that $\mathcal{A}\otimes\mathcal{B}\in\mathcal{S}$.

$\left(  4\right)  .$ The "only if" part follows from $\left(  3\right)  $ and
the "if" part is obvious.

$\left(  5\right)  .$ This follows from Lemma \ref{trace}.

$\left(  6\right)  .$ A factor trace on a commutative C*-algebra is
1-dimensional, and hence, by Theorem \ref{mft}, is an MF-trace.

$\left(  7\right)  $. Suppose $\tau$ is a factor trace on $\mathcal{B}$. It
follows from \cite{Longo} and \cite{Popa} that $\tau$ can be extended to a
factor state $\varphi$ on $\mathcal{A}$. Since $\varphi$ is
finite-dimensional, $\tau$ is finite-dimensional, and hence $\tau
\in\mathcal{J}_{MF}\left(  \mathcal{A}\right)  $. It now follows from part
$\left(  1\right)  $ that $\mathcal{A}\in\mathcal{S}$.

$\left(  8\right)  .$ Suppose $\mathcal{D}$ is a finite direct sum of
commutative C*-algebras tensored with matrix algebras. Since every factor
state on $\mathcal{D}$ is finite-dimensional, we know that every C*-subalgebra
of $\mathcal{D}$ is in $\mathcal{S}$. It follows from the definition of AH
algebra and statement $\left(  5\right)  $ that every AH algebra is in
$\mathcal{S}$.

$\left(  9\right)  $. Every factor representation is a direct sum of copies of
an irreducible representation. Thus every factor tracial state must be
finite-dimensional, which, by Proposition \ref{mft}, is an MF-trace. Hence
$\mathcal{A}\in\mathcal{S}$.
\end{proof}

\bigskip

\begin{corollary}
Suppose $\mathcal{A}=C^{\ast}\left(  x_{1},\ldots,x_{n}\right)  \in
\mathcal{S}$ . Then either

\begin{enumerate}
\item There is a $\tau\in\mathcal{T}_{MF}\left(  \mathcal{A}\right)  $ and an
$a\in\pi_{\tau}\left(  \mathcal{A}\right)  ^{\prime\prime}$ such that
$\delta_{0}\left(  a\right)  =1$, or

\item $\mathcal{A}/\mathcal{J}_{MF}\left(  \mathcal{A}\right)  $ is RFD.
\end{enumerate}

Therefore, either $\delta_{\text{\textrm{top}}}\left(  x_{1},\ldots
,x_{n}\right)  \geq1,$ or $d=\dim\mathcal{A}/\mathcal{J}_{MF}\left(
\mathcal{A}\right)  <\infty$ and $\delta_{\text{\textrm{top}}}\left(
x_{1},\ldots,x_{n}\right)  =1-\frac{1}{d}$.
\end{corollary}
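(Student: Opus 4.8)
The plan is to argue by cases according to which of the two alternatives holds. If alternative $(1)$ holds, choose $\tau\in\mathcal{T}_{MF}(\mathcal{A})$ and a selfadjoint $a\in\pi_{\tau}(\mathcal{A})''$ with $\delta_{0}(a)=1$ (the dimension being computed with respect to $\hat{\tau}$); then Theorem \ref{lower} gives at once $\delta_{top}(x_{1},\ldots,x_{n})\geq\delta_{0}(a,\hat{\tau})=1$. So the real work is to show that the failure of $(1)$ forces $(2)$, and then to split $(2)$ according to whether $d=\dim_{\mathbb{C}}\mathcal{A}/\mathcal{J}_{MF}(\mathcal{A})$ is finite.

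So assume no pair as in $(1)$ exists. Since $\mathcal{A}\in\mathcal{S}$, every tracial state is an MF-trace, so for every $\tau\in\mathcal{TS}(\mathcal{A})$ and every selfadjoint $a\in\pi_{\tau}(\mathcal{A})''$ we have $\delta_{0}(a)<1$; by the Voiculescu formula recalled above this forces $a$ to have an eigenvalue, and hence, by the quoted fact from \cite{KR}, $\pi_{\tau}(\mathcal{A})''$ has a nonzero finite-dimensional invariant subspace. Specialize now to a factor tracial state $\tau$, so that $\mathcal{M}:=\pi_{\tau}(\mathcal{A})''$ is a factor with a nonzero finite-dimensional invariant subspace $V$. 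Since $\mathcal{M}$ is selfadjoint, $P_{V}\in\mathcal{M}^{\prime}$, and the compression $m\mapsto mP_{V}|_{V}$ is a normal unital $\ast$-homomorphism of $\mathcal{M}$ into $B(V)$ whose kernel is a weak$^{*}$-closed two-sided ideal, hence of the form $z\mathcal{M}$ for a central projection $z$; as $\mathcal{M}$ is a factor and the map is nonzero, $z=0$ and the map is injective. Thus $\mathcal{M}$ is $\ast$-isomorphic to a subalgebra of $B(V)$, so $\pi_{\tau}(\mathcal{A})''\cong\mathcal{M}_{d(\tau)}(\mathbb{C})$ for some $d(\tau)\in\mathbb{N}$; that is, every factor tracial state on $\mathcal{A}$ is finite-dimensional.

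Next I would compute the MF-ideal. For a finite-dimensional factor trace $\rho$, the GNS representation is a $\ast$-homomorphism $\pi_{\rho}:\mathcal{A}\to\mathcal{M}_{d(\rho)}(\mathbb{C})$ with $\hat{\rho}=\tau_{d(\rho)}$, and since $\tau_{d(\rho)}$ is faithful we get $\{a:\rho(a^{\ast}a)=0\}=\ker\pi_{\rho}$. For a fixed $a$, the function $\tau\mapsto\tau(a^{\ast}a)$ is weak$^{*}$-continuous, affine and nonnegative on the compact convex set $\mathcal{TS}(\mathcal{A})$, whose extreme points coincide with the factor traces by \cite{HM}; a nonnegative affine continuous function vanishing on all extreme points vanishes identically on $\mathcal{TS}(\mathcal{A})$. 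Hence $a\in\mathcal{J}_{MF}(\mathcal{A})$ if and only if $\rho(a^{\ast}a)=0$ for every factor trace $\rho$, so
\[
\mathcal{J}_{MF}(\mathcal{A})=\bigcap\{\ker\pi_{\rho}:\rho\text{ a factor tracial state on }\mathcal{A}\}.
\]
In particular the finite-dimensional representations $\pi_{\rho}$ separate the points of $\mathcal{A}/\mathcal{J}_{MF}(\mathcal{A})$, i.e. $\mathcal{A}/\mathcal{J}_{MF}(\mathcal{A})$ is RFD, which is alternative $(2)$.

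Finally I would read off $\delta_{top}$. If $d<\infty$, the earlier theorem on MF-algebras whose $\mathcal{A}/\mathcal{J}_{MF}$ is finite-dimensional gives $\delta_{top}(x_{1},\ldots,x_{n})=1-1/d$. If $d=\infty$, then the RFD algebra $\mathcal{A}/\mathcal{J}_{MF}(\mathcal{A})$ cannot embed into any finite direct sum of matrix algebras, so it has infinitely many pairwise inequivalent finite-dimensional irreducible representations; composing with the quotient map $\mathcal{A}\to\mathcal{A}/\mathcal{J}_{MF}(\mathcal{A})$ produces infinitely many pairwise inequivalent finite-dimensional irreducible representations of $\mathcal{A}$, so alternative $(2)$ of Theorem \ref{alt} gives $\delta_{top}(x_{1},\ldots,x_{n})\geq1$. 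Combined with the first paragraph this proves the dichotomy. The step I expect to be the main obstacle is the factor-compression argument, namely making sure that a factor possessing a finite-dimensional invariant subspace really is finite-dimensional (the injectivity of the compression homomorphism), together with the slightly delicate reduction of $\mathcal{J}_{MF}(\mathcal{A})$ to the factor traces via affinity and Krein-Milman.
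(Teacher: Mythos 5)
Your argument is correct, and since the paper states this corollary without proof there is nothing to contrast it with: what you wrote is precisely the intended assembly of the paper's own ingredients, namely Theorem \ref{lower} for alternative (1), Voiculescu's eigenvalue formula plus the quoted Kadison--Ringrose fact and the Hadwin--Ma description of extreme tracial states (together with $\mathcal{TS}(\mathcal{A})=\mathcal{T}_{MF}(\mathcal{A})$ from $\mathcal{A}\in\mathcal{S}$) to show that failure of (1) makes every factor trace finite-dimensional and hence $\mathcal{A}/\mathcal{J}_{MF}(\mathcal{A})$ RFD, and then the finite-dimensional-quotient theorem and Theorem \ref{alt} for the two subcases of the final dichotomy. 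The only point worth making explicit is that the $a$ in alternative (1) is to be read as selfadjoint (as in the paper's use of $\delta_{0}$ for a single element and in Theorem \ref{lower}); since your case split is on the existence of a selfadjoint witness, this convention creates no gap.
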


\begin{corollary}
\label{dog}If $\mathcal{A}=C^{\ast}\left(  x_{1},\ldots,x_{n}\right)  $ is
MF-nuclear and $\mathcal{A}\in\mathcal{S}$, then%
\[
\delta_{\text{\textrm{top}}}\left(  x_{1},\ldots,x_{n}\right)  =1-\frac
{1}{\dim\mathcal{A}/\mathcal{J}_{MF}\left(  \mathcal{A}\right)  }.
\]

\end{corollary}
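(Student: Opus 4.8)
The plan is to deduce the identity from three facts already established in the paper: the dichotomy in the corollary immediately preceding this one (which applies since $\mathcal{A}\in\mathcal{S}$), the upper bound $\delta_{\text{top}}\le 1$ for MF-nuclear algebras (Theorem \ref{MFnuclear}), and the exact formula $\delta_{\text{top}}=1-1/d$ valid for any MF-algebra with $\dim(\mathcal{A}/\mathcal{J}_{MF})=d<\infty$ (the theorem in Section 3 computing $\delta_{\text{top}}$ when $\mathcal{A}/\mathcal{J}_{MF}(\mathcal{A})$ is finite-dimensional). Concretely, the preceding corollary tells us that either $\delta_{\text{top}}(x_1,\ldots,x_n)\ge 1$, or $d:=\dim(\mathcal{A}/\mathcal{J}_{MF}(\mathcal{A}))<\infty$ and $\delta_{\text{top}}(x_1,\ldots,x_n)=1-1/d$.

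In the latter case there is nothing more to prove, since the displayed identity is exactly $\delta_{\text{top}}(x_1,\ldots,x_n)=1-1/d$. So I would assume the former case, $\delta_{\text{top}}(x_1,\ldots,x_n)\ge 1$. This is where the MF-nuclearity hypothesis enters: Theorem \ref{MFnuclear} gives $\delta_{\text{top}}(x_1,\ldots,x_n)\le 1$, hence $\delta_{\text{top}}(x_1,\ldots,x_n)=1$. It then suffices to observe that $\dim(\mathcal{A}/\mathcal{J}_{MF}(\mathcal{A}))=\infty$, for with the usual convention $1/\infty=0$ (already in force in the earlier corollaries on simple and on RFD MF-nuclear algebras) the right-hand side of the claimed formula is then $1$. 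Indeed, if $\dim(\mathcal{A}/\mathcal{J}_{MF}(\mathcal{A}))$ were a finite integer $d$, the Section 3 theorem would force $\delta_{\text{top}}(x_1,\ldots,x_n)=1-1/d<1$, contradicting $\delta_{\text{top}}(x_1,\ldots,x_n)=1$; so the quotient is infinite-dimensional and we are done.

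There is no real computational obstacle here: everything is packaged into the cited results, and the argument is pure bookkeeping. The only point that needs a word of care is the reading of the formula when $\mathcal{A}/\mathcal{J}_{MF}(\mathcal{A})$ is infinite-dimensional --- it must be interpreted, as throughout the paper, as $\delta_{\text{top}}(x_1,\ldots,x_n)=1$ --- together with the (trivial) observation that the two alternatives supplied by the preceding corollary cannot overlap, since $1-1/d\ge 1$ is impossible for a positive integer $d$.
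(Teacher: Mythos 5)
Your argument is correct and matches the paper's (implicit) reasoning: Corollary \ref{dog} is stated as an immediate consequence of the dichotomy in the preceding corollary together with the MF-nuclear upper bound of Theorem \ref{MFnuclear}, with the finite-dimensional case handled by the Section 3 theorem and the infinite-dimensional case read with the convention $1/\infty=0$, exactly as you do. Nothing is missing.
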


\begin{remark}
It seems unlikely that $\mathcal{S}$ contains every finitely generated unital
MF C*-algebra. However, we have not yet been able to construct an MF
C*-algebra with a trace that is not an MF-trace. This question is loosely
related to Connes' famous "Embedding Problem", which asks if every separably
acting finite von Neumann algebra can be tracially embedded in an ultrapower
of the hyperfinite $II_{1}$ factor. This is known to be equivalent to the
statement that, for every C*-algebra $\mathcal{A}=C^{\ast}\left(  x_{1}%
,\ldots,x_{n}\right)  $ with a tracial state $\tau$ there is a norm-bounded
sequence $\left\{  \vec{A}_{k}\right\}  $, with $\vec{A}_{k}\in\mathcal{M}%
_{m_{k}}\left(  \mathbb{C}\right)  ^{n}$ such that%
\[
\left(  \vec{A}_{k},\tau_{m_{k}}\right)  \overset{\mathrm{dist}}%
{\longrightarrow}\left(  \vec{x},\tau\right)  .
\]
Suppose Connes' Embedding Problem has a negative answer and no such sequence
$\left\{  \vec{A}_{k}\right\}  $ exists for $C^{\ast}\left(  x_{1}%
,\ldots,x_{n}\right)  $. We know from \cite{BK} that there is an MF-algebra
$\mathcal{B}=C^{\ast}\left(  y_{1},\ldots,y_{n}\right)  $ and a unital $\ast
$-homomorphism $\pi:\mathcal{B}\rightarrow\mathcal{A}$ such that $\pi\left(
y_{j}\right)  =x_{j}$ for $1\leq j\leq n$. Define a tracial state
$\rho:\mathcal{B}\rightarrow\mathbb{C}$ by $\rho=\tau\circ\pi$. If $\rho$ is
an $MF$-trace for $\mathcal{B}$, there would be a sequence $\left\{  \vec
{A}_{k}\right\}  $ with
\[
\left(  \vec{A}_{k},\tau_{m_{k}}\right)  \overset{\mathrm{dist}}%
{\longrightarrow}\left(  \vec{y},\rho\right)  .
\]
However, for any polynomial $p,$ we have%
\[
\rho\left(  p\left(  \vec{y}\right)  \right)  =\tau\left(  p\left(  \vec
{x}\right)  \right)  ,
\]
which would yield
\[
\left(  \vec{A}_{k},\tau_{m_{k}}\right)  \overset{\mathrm{dist}}%
{\longrightarrow}\left(  \vec{x},\tau\right)  .
\]
Hence $\rho$ is not an MF-trace for $\mathcal{B}$.
\end{remark}

\subsection{The Class $\mathcal{W}$}

We now want to focus on the class $\mathcal{W}$ of all separable $MF$
C*-algebras $\mathcal{A}$ such that $\mathcal{J}_{MF}\left(  \mathcal{A}%
\right)  =\left\{  0\right\}  $. The main reason is the following immediate
consequence of Corollary \ref{dog}.

\bigskip

\begin{proposition}
\label{fox}Suppose $\mathcal{A}=C^{\ast}\left(  x_{1},\ldots,x_{n}\right)  $
is MF-nuclear and $\mathcal{A\in S}\cap\mathcal{W}$. Then%
\[
\delta_{\text{\textrm{top}}}\left(  x_{1},\ldots,x_{n}\right)  =1-\frac
{1}{\dim\mathcal{A}}.
\]

\end{proposition}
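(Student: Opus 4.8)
The plan is simply to combine Corollary \ref{dog} with the defining property of the class $\mathcal{W}$. Since $\mathcal{A}=C^{\ast}\left(x_{1},\ldots,x_{n}\right)$ is MF-nuclear and $\mathcal{A}\in\mathcal{S}$, Corollary \ref{dog} applies verbatim and yields
\[
\delta_{\text{\textrm{top}}}\left(x_{1},\ldots,x_{n}\right)=1-\frac{1}{\dim\mathcal{A}/\mathcal{J}_{MF}\left(\mathcal{A}\right)}.
\]
Next I would invoke the hypothesis $\mathcal{A}\in\mathcal{W}$, which by definition means $\mathcal{J}_{MF}\left(\mathcal{A}\right)=\left\{0\right\}$. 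Hence the quotient map $\mathcal{A}\rightarrow\mathcal{A}/\mathcal{J}_{MF}\left(\mathcal{A}\right)$ is an isomorphism, so $\dim_{\mathbb{C}}\left(\mathcal{A}/\mathcal{J}_{MF}\left(\mathcal{A}\right)\right)=\dim_{\mathbb{C}}\mathcal{A}$, and substituting this into the displayed formula gives exactly
\[
\delta_{\text{\textrm{top}}}\left(x_{1},\ldots,x_{n}\right)=1-\frac{1}{\dim\mathcal{A}},
\]
with the usual convention that the right-hand side equals $1$ when $\mathcal{A}$ is infinite-dimensional.

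There is no real obstacle here: the entire content of the proposition has been packaged into Corollary \ref{dog}, which in turn rests on Theorem \ref{MFnuclear} for the upper bound $\delta_{\text{\textrm{top}}}\leq1$ in the MF-nuclear case and on the structural dichotomy for algebras in $\mathcal{S}$ coming from Theorem \ref{lower} together with Voiculescu's eigenvalue formula for $\delta_{0}$. The only thing worth remarking is that the two membership hypotheses are exactly what one needs: $\mathcal{A}\in\mathcal{S}$ is used to apply the corollary, while $\mathcal{A}\in\mathcal{W}$ is used to identify $\mathcal{A}/\mathcal{J}_{MF}\left(\mathcal{A}\right)$ with $\mathcal{A}$, and both of these reductions are immediate from the definitions of the classes $\mathcal{S}$ and $\mathcal{W}$.
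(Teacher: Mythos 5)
Your proposal is correct and is exactly the paper's argument: the paper states Proposition \ref{fox} as an immediate consequence of Corollary \ref{dog}, and your two steps (apply Corollary \ref{dog} using MF-nuclearity and $\mathcal{A}\in\mathcal{S}$, then use $\mathcal{A}\in\mathcal{W}$, i.e. $\mathcal{J}_{MF}\left(\mathcal{A}\right)=\left\{0\right\}$, to identify $\dim\mathcal{A}/\mathcal{J}_{MF}\left(\mathcal{A}\right)$ with $\dim\mathcal{A}$) are precisely that deduction.
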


\bigskip

Here are some basic properties of the class $\mathcal{W}$.

\begin{theorem}
The following are true.

\begin{enumerate}
\item If $\left\{  \mathcal{A}_{i}:i\in I\right\}  \subseteq\mathcal{W}$, and
$\mathcal{A}$ is a separable unital subalgebra of the C*-direct product $%
{\displaystyle\prod_{i\in I}}
\mathcal{A}_{i}$, then $\mathcal{A}\in\mathcal{W}$.

\item If $\mathcal{A},\mathcal{B}\in\mathcal{W}$ and one of $\mathcal{A}$ and
$\mathcal{B}$ is nuclear, then $\mathcal{A}\otimes\mathcal{B}\in\mathcal{W}$.

\item $\mathcal{A}\oplus\mathcal{B}\in\mathcal{W}$ if and only if
$\mathcal{A}\in\mathcal{W}$ and $\mathcal{B}\in\mathcal{W}$.

\item If $n\geq1$, then $\mathcal{A\in W}$ if and only if $\mathcal{M}%
_{n}\left(  \mathbb{C}\right)  \otimes\mathcal{A}\in\mathcal{W}$.

\item Every separable unital simple MF C*-algebra is in $\mathcal{W}$.

\item Every separable unital RFD C*-algebra is in $\mathcal{W}$.

\item $\mathcal{W}$ is not closed under direct limits.
\end{enumerate}
\end{theorem}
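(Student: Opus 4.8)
The plan is to verify the seven assertions in turn, using repeatedly the description of MF-traces in Proposition \ref{mft}(4) (a tracial state $\psi$ on an MF algebra $\mathcal{C}$ is an MF-trace iff $\psi=\tau_{\alpha}\circ\pi$ for some unital $\ast$-homomorphism $\pi:\mathcal{C}\rightarrow\prod^{\alpha}\mathcal{M}_{k}\left(\mathbb{C}\right)$), together with the standard permanence of the MF property under subalgebras, finite direct sums, and tensoring with $\mathcal{M}_{n}\left(\mathbb{C}\right)$ \cite{BK,HS4}. Two elementary facts will be used throughout. First, when $\mathcal{C}$ is unital $\mathcal{J}_{MF}\left(\mathcal{C}\right)$ is a \emph{proper} closed two-sided ideal: it is a closed subspace since $a\mapsto\tau\left(a^{\ast}a\right)^{1/2}$ is a norm-continuous seminorm, it absorbs multiplication because the tracial identity gives $\tau\left(\left(ba\right)^{\ast}ba\right)=\tau\left(b^{\ast}b\,aa^{\ast}\right)\leq\left\Vert b\right\Vert^{2}\tau\left(a^{\ast}a\right)$ (and symmetrically), and it is proper since $\tau\left(1\right)=1$ for every MF-trace. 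Second (a \emph{compression fact}), if $p\in\mathcal{C}$ is a projection, $\tau\in\mathcal{T}_{MF}\left(\mathcal{C}\right)$, and $\tau\left(p\right)>0$, then $\tfrac{1}{\tau\left(p\right)}\tau|_{p\mathcal{C}p}\in\mathcal{T}_{MF}\left(p\mathcal{C}p\right)$ (note $p\mathcal{C}p$ is MF, being a subalgebra): writing $\tau=\tau_{\alpha}\circ\pi$ and replacing $\pi\left(p\right)$ by a lift of genuine projections $p_{k}\in\mathcal{M}_{k}\left(\mathbb{C}\right)$ of rank $r_{k}$, the corner $\pi\left(p\right)\bigl(\prod^{\alpha}\mathcal{M}_{k}\left(\mathbb{C}\right)\bigr)\pi\left(p\right)$ is canonically $\ast$-isomorphic to $\prod^{\alpha}\mathcal{M}_{r_{k}}\left(\mathbb{C}\right)$, carrying the compressed normalized trace to the canonical ultraproduct trace (using $\tau_{k}\left(p_{k}x_{k}p_{k}\right)=\tfrac{r_{k}}{k}\tau_{r_{k}}\left(p_{k}x_{k}p_{k}\right)$ and $r_{k}/k\to_{\alpha}\tau\left(p\right)$), so Proposition \ref{mft}(4) applies to the compression of $\pi$.

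For (1), first $\mathcal{A}$ is MF: given a finite tuple $\vec{a}$ from $\mathcal{A}$, a tolerance $\varepsilon>0$, and $\ast$-polynomials $p_{1},\dots,p_{N}$, choose for each $i$ a coordinate $j_{i}$ with $\left\Vert p_{i}\left(\vec{a}_{j_{i}}\right)\right\Vert>\left\Vert p_{i}\left(\vec{a}\right)\right\Vert-\varepsilon/2$; the corresponding tuple in the finite direct sum $\mathcal{A}_{j_{1}}\oplus\cdots\oplus\mathcal{A}_{j_{N}}$ (which is MF) has all its $p_{i}$-norms within $\varepsilon/2$ of those of $\vec{a}$, and a matricial model for it yields one for $\vec{a}$. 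For $\mathcal{J}_{MF}\left(\mathcal{A}\right)$: each coordinate projection $\pi_{j}:\mathcal{A}\rightarrow\mathcal{A}_{j}$ is a unital $\ast$-homomorphism whose image is a unital subalgebra of $\mathcal{A}_{j}$, hence MF \cite{BK} and, by Proposition \ref{mft}(5), in $\mathcal{W}$; if $0\neq a\in\mathcal{A}$, pick $j$ with $\pi_{j}\left(a\right)\neq0$ and an MF-trace $\tau$ of $\pi_{j}\left(\mathcal{A}\right)$ with $\tau\left(\pi_{j}\left(a\right)^{\ast}\pi_{j}\left(a\right)\right)>0$; then $\tau\circ\pi_{j}\in\mathcal{T}_{MF}\left(\mathcal{A}\right)$ by Proposition \ref{mft}(3) shows $a\notin\mathcal{J}_{MF}\left(\mathcal{A}\right)$. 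Statement (6) then follows, since a unital RFD algebra embeds unitally into a C*-product of full matrix algebras, each of which lies in $\mathcal{W}$, so (1) applies.

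For (3): ``$\Leftarrow$'' uses that $\mathcal{A}\oplus\mathcal{B}$ is MF and pulls an MF-trace of $\mathcal{A}$ (or $\mathcal{B}$) witnessing a given $0\neq\left(a,b\right)$ back along a unital quotient map via Proposition \ref{mft}(3); ``$\Rightarrow$'' uses that $\mathcal{A}$ is a direct summand (hence MF) and that an MF-trace $\tau$ of $\mathcal{A}\oplus\mathcal{B}$ with $\tau\left(\left(a^{\ast}a,0\right)\right)>0$ necessarily has $\tau\left(\left(1,0\right)\right)>0$, so the compression fact produces an MF-trace of the corner $\left(1,0\right)\left(\mathcal{A}\oplus\mathcal{B}\right)\left(1,0\right)\cong\mathcal{A}$ detecting $a$. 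Part (4) is parallel: ``$\Leftarrow$'' is (2) applied with $\mathcal{B}=\mathcal{M}_{n}\left(\mathbb{C}\right)$ (nuclear and in $\mathcal{W}$), while ``$\Rightarrow$'' applies the compression fact to the corner $\left(e_{11}\otimes1\right)\mathcal{M}_{n}\left(\mathcal{A}\right)\left(e_{11}\otimes1\right)\cong\mathcal{A}$.

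For (2), with one factor nuclear $\mathcal{A}\otimes\mathcal{B}$ is unambiguous and is MF by \cite[Prop. 3.1]{HS4}; the content is that product MF-traces separate it. Since $\mathcal{J}_{MF}\left(\mathcal{A}\right)=\left\{0\right\}$, the direct sum $\Pi_{A}=\bigoplus_{\alpha\in\mathcal{T}_{MF}\left(\mathcal{A}\right)}\pi_{\alpha}$ of all GNS representations of MF-traces of $\mathcal{A}$ is faithful, and likewise $\Pi_{B}$; hence $\Pi_{A}\otimes\Pi_{B}=\bigoplus_{\alpha,\beta}\left(\pi_{\alpha}\otimes\pi_{\beta}\right)$ is faithful on $\mathcal{A}\otimes\mathcal{B}$. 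Given $0\neq c$, some $\left(\pi_{\alpha}\otimes\pi_{\beta}\right)\left(c\right)\neq0$; the cyclic tracial vectors $e_{\alpha},e_{\beta}$ are separating for $\pi_{\alpha}\left(\mathcal{A}\right)^{\prime\prime}$, $\pi_{\beta}\left(\mathcal{B}\right)^{\prime\prime}$, so $e_{\alpha}\otimes e_{\beta}$ is separating for $\pi_{\alpha}\left(\mathcal{A}\right)^{\prime\prime}\,\overline{\otimes}\,\pi_{\beta}\left(\mathcal{B}\right)^{\prime\prime}$, whence $\left(\alpha\otimes\beta\right)\left(c^{\ast}c\right)=\left\Vert\left(\pi_{\alpha}\otimes\pi_{\beta}\right)\left(c\right)\left(e_{\alpha}\otimes e_{\beta}\right)\right\Vert^{2}>0$; since $\alpha\otimes\beta\in\mathcal{T}_{MF}\left(\mathcal{A}\otimes\mathcal{B}\right)$ by Proposition \ref{mft}(6), $\mathcal{J}_{MF}\left(\mathcal{A}\otimes\mathcal{B}\right)=\left\{0\right\}$. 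I expect this separating-vector step to be the main obstacle; the remaining parts are bookkeeping. Indeed, (5): $\mathcal{A}$ is MF, so $\mathcal{T}_{MF}\left(\mathcal{A}\right)\neq\varnothing$ by Proposition \ref{mft}(1), and $\mathcal{J}_{MF}\left(\mathcal{A}\right)$ is a proper closed two-sided ideal, so simplicity forces $\mathcal{J}_{MF}\left(\mathcal{A}\right)=\left\{0\right\}$. And (7): take $\mathcal{A}_{n}=\mathcal{M}_{n}\left(\mathbb{C}\right)\oplus\mathbb{C}$ with unital connecting maps $\left(a,\lambda\right)\mapsto\left(\operatorname{diag}\left(a,\lambda\right),\lambda\right)$, whose direct limit is $\mathcal{K}\left(\ell^{2}\right)+\mathbb{C}1$; each $\mathcal{A}_{n}$ is finite-dimensional, hence in $\mathcal{W}$, but $\mathcal{K}\left(\ell^{2}\right)+\mathbb{C}1$ is MF (it is AF) and every tracial state on it kills $\mathcal{K}\left(\ell^{2}\right)$ (a bounded trace on $\mathcal{K}\left(\ell^{2}\right)$ vanishes), so $\mathcal{K}\left(\ell^{2}\right)\subseteq\mathcal{J}_{MF}\left(\mathcal{K}\left(\ell^{2}\right)+\mathbb{C}1\right)\neq\left\{0\right\}$ and the limit is not in $\mathcal{W}$.
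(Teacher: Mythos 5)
Your argument is correct, and for the parts the paper actually writes out it follows the same skeleton: item (1) is the paper's appeal to Proposition \ref{mft}(3) (you add (5) and, usefully, a verification that $\mathcal{A}$ is MF, which the paper leaves implicit); item (7) is the paper's example $\mathcal{K}\left(\ell^{2}\right)+\mathbb{C}1$, merely rewritten as a Bratteli-type limit of $\mathcal{M}_{n}\left(\mathbb{C}\right)\oplus\mathbb{C}$. In (2) the routes differ in packaging but not in substance: the paper shows that $T\mapsto\sup_{\tau\in\mathcal{T}_{MF}\left(\mathcal{A}\otimes\mathcal{B}\right)}\left\Vert\pi_{\tau}\left(T\right)\right\Vert$ is a C*-cross norm and invokes uniqueness of the C*-norm under nuclearity, while you establish faithfulness of $\bigoplus_{\alpha,\beta}\pi_{\alpha}\otimes\pi_{\beta}$ and use the separating GNS vector $e_{\alpha}\otimes e_{\beta}$; both hinge on Proposition \ref{mft}(6) plus uniqueness of the tensor norm, and your separating-vector step makes explicit the passage from $\pi_{\tau}\left(c\right)\neq0$ to $\tau\left(c^{\ast}c\right)>0$ that the paper uses tacitly. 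The genuine divergence is in (3) and (4), which the paper dismisses as obvious: you prove them via a compression lemma for MF-traces (lift $\pi\left(p\right)$ to projections $p_{k}$, identify the corner of $\prod^{\alpha}\mathcal{M}_{k}\left(\mathbb{C}\right)$ with $\prod^{\alpha}\mathcal{M}_{r_{k}}\left(\mathbb{C}\right)$, renormalize the trace, and reapply Proposition \ref{mft}(4)). This is a correct and worthwhile filling-in, and for (3)$\left(\Rightarrow\right)$ some such cut-down (either at the ultraproduct level or at the microstate level, using that the central projection $\left(1,0\right)$ may be adjoined to the generators) really is needed. For (4)$\left(\Rightarrow\right)$, however, the compression is avoidable: if $a\neq0$ then $1\otimes a\neq0$, so an MF-trace $\tau$ of $\mathcal{M}_{n}\left(\mathcal{A}\right)$ with $\tau\left(1\otimes a^{\ast}a\right)>0$ restricts, by Proposition \ref{mft}(5), to an MF-trace of the unital subalgebra $1\otimes\mathcal{A}\cong\mathcal{A}$ detecting $a$; this is presumably the "obvious" argument the paper had in mind. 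The remaining items (5), (6) match the paper's intent (for (6) the paper's direct route is via finite-dimensional traces and Proposition \ref{mft}(2), but your reduction to (1) is equally valid once the coordinate representations are made unital).
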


\begin{proof}
$\left(  1\right)  .$ This is a consequence of part $\left(  3\right)  $ of
Proposition \ref{mft}.

$\left(  2\right)  $. Suppose $\mathcal{A}$, $\mathcal{B}$ $\in\mathcal{W}$.
Then for every $A\in\mathcal{A}$ and every $B\in\mathcal{B}$ we have%
\[
\left\Vert A\right\Vert =\sup_{\alpha\in\mathcal{T}_{MF}\left(  \mathcal{A}%
\right)  }\left\Vert \pi_{\alpha}\left(  A\right)  \right\Vert \text{ and
}\left\Vert B\right\Vert =\sup_{\beta\in\mathcal{T}_{MF}\left(  \mathcal{B}%
\right)  }\left\Vert \pi_{\beta}\left(  B\right)  \right\Vert .
\]
It follows from part $\left(  6\right)  $ of Proposition \ref{mft} that%
\[
\left\{  \alpha\otimes\beta:\alpha\in\mathcal{T}_{MF}\left(  \mathcal{A}%
\right)  ,\beta\in\mathcal{T}_{MF}\left(  \mathcal{B}\right)  \right\}
\subseteq\mathcal{T}_{MF}\left(  \mathcal{A}\otimes\mathcal{B}\right)  .
\]
Moreover, for each such $\alpha,\beta$ we have $\pi_{\alpha\otimes\beta}%
=\pi_{\alpha}\otimes\pi_{\beta}.$ Thus
\[
\sup\left\{  \left\Vert \pi_{\alpha\otimes\beta}\left(  A\otimes B\right)
\right\Vert :\alpha\in\mathcal{T}_{MF}\left(  \mathcal{A}\right)  ,\beta
\in\mathcal{T}_{MF}\left(  \mathcal{B}\right)  \right\}  =\left\Vert
A\right\Vert \left\Vert B\right\Vert .
\]
Hence%
\[
\sup_{\tau\in\mathcal{T}_{MF}\left(  \mathcal{A}\otimes\mathcal{B}\right)
}\left\Vert \pi_{\tau}\left(  T\right)  \right\Vert
\]
is a C*-cross norm on $\mathcal{A}\otimes\mathcal{B}$, but since one of
$\mathcal{A},\mathcal{B}$ is nuclear, there is only one such norm. Hence%
\[
\left\Vert T\right\Vert =\sup_{\tau\in\mathcal{T}_{MF}\left(  \mathcal{A}%
\otimes\mathcal{B}\right)  }\left\Vert \pi_{\tau}\left(  T\right)  \right\Vert
,
\]
which implies $\mathcal{A}\otimes\mathcal{B}\in\mathcal{W}$.

Statements $\left(  3\right)  $, $\left(  4\right)  $, $\left(  5\right)  $
and $\left(  6\right)  $ are obvious.

$\left(  7\right)  .$ Let $\mathcal{A}=\mathcal{K}\left(  \ell^{2}\right)
+\mathbb{C}1$. We know that there is no nonzero continuous trace on the
algebra $\mathcal{K}\left(  \ell^{2}\right)  $ of compact operators, which
means $\mathcal{A}\notin\mathcal{W}$. However, if $\left\{  P_{n}\right\}  $
is an increasing sequence of finite-rank projections converging to $1$ in the
strong operator topology, then $\mathcal{A}$ is the direct limit of the
finite-dimensional algebras $\mathcal{A}_{n}=P_{n}\mathcal{K}\left(  \ell
^{2}\right)  P_{n}+\mathbb{C}1$.
\end{proof}

\bigskip

\bigskip

Although characterizing the class $\mathcal{W}$ may be difficult, the
following problem should be tractable in terms of Brattelli diagrams.

\begin{problem}
Which $AF$ algebras are in $\mathcal{W}$?
\end{problem}

\bigskip

Proposition \ref{fox} leads to the following semicontinuity result.

\begin{theorem}
Suppose, for each $s\geq0$, $\mathcal{A}_{s}=C^{\ast}\left(  \vec{A}%
_{s}=\left(  A_{s1},\ldots,A_{sn}\right)  \right)  $ is nuclear and in
$\mathcal{S}\cap\mathcal{W}$ and suppose $\vec{A}_{s}\overset{t.d.}%
{\longrightarrow}\vec{A}_{0}$. Then%
\[
\delta_{\text{\textrm{top}}}\left(  \vec{A}_{0}\right)  \leq\liminf
_{s\rightarrow\infty}\delta_{\text{\textrm{top}}}\left(  \vec{A}_{s}\right)
.
\]

\end{theorem}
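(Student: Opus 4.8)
The plan is to use Proposition~\ref{fox} to replace $\delta_{\text{\textrm{top}}}$ by the linear dimension of the algebras, and then to prove directly that $\dim$ is lower semicontinuous along convergence in topological distribution. First, each $\mathcal{A}_s$ (including $s=0$) lies in $\mathcal{S}$, hence is MF; being also nuclear, $\pi(\mathcal{A}_s)''$ is hyperfinite for every representation $\pi$, in particular for the GNS representations of its MF-traces, so $\mathcal{A}_s$ is MF-nuclear. Since $\mathcal{A}_s\in\mathcal{S}\cap\mathcal{W}$, Proposition~\ref{fox} gives
\[
\delta_{\text{\textrm{top}}}\!\left(\vec A_s\right)=1-\frac{1}{\dim\mathcal{A}_s}\qquad(s\ge 0),
\]
with the convention $1-1/\infty=1$. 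Because $d\mapsto 1-1/d$ is nondecreasing and continuous on $[1,\infty]$, one has $\liminf_s\bigl(1-1/\dim\mathcal{A}_s\bigr)=1-1/\bigl(\liminf_s\dim\mathcal{A}_s\bigr)$, so the theorem is equivalent to the inequality $\dim\mathcal{A}_0\le\liminf_{s\to\infty}\dim\mathcal{A}_s$.

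To prove this, assume $\liminf_s\dim\mathcal{A}_s=L<\infty$ (otherwise there is nothing to prove). Choose a subsequence $(s_j)$ with $\dim\mathcal{A}_{s_j}\to L$; these being positive integers we may assume $\dim\mathcal{A}_{s_j}=L$ for all $j$, and, since there are only finitely many isomorphism types of $L$-dimensional C*-algebras, after a further passage to a subsequence all $\mathcal{A}_{s_j}$ are $*$-isomorphic to one fixed algebra $\mathcal{D}=\mathcal{M}_{n_1}(\mathbb{C})\oplus\cdots\oplus\mathcal{M}_{n_r}(\mathbb{C})$ with $\dim\mathcal{D}=L$. Fix $*$-isomorphisms $\varphi_j:\mathcal{A}_{s_j}\to\mathcal{D}$ and put $\vec B_j=\varphi_j(\vec A_{s_j})\in\mathcal{D}^n$; then $C^{\ast}(\vec B_j)=\mathcal{D}$ and $\|p(\vec B_j)\|=\|p(\vec A_{s_j})\|$ for every $*$-polynomial $p$. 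Taking $p=X_i^{\ast}X_i$ shows $\sup_j\|\vec B_j\|<\infty$ (the norms $\|A_{s_j,i}\|$ converge as $\vec A_s\overset{t.d.}{\longrightarrow}\vec A_0$), so $\{\vec B_j\}$ lies in a compact subset of the finite-dimensional space $\mathcal{D}^n$; after one more passage to a subsequence, $\vec B_j\to\vec B$ in norm for some $\vec B\in\mathcal{D}^n$. For every $*$-polynomial $p$,
\[
\|p(\vec B)\|=\lim_j\|p(\vec B_j)\|=\lim_j\|p(\vec A_{s_j})\|=\|p(\vec A_0)\|,
\]
the last equality by topological-distribution convergence. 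By the norm-rigidity proposition of Section~2 (the analogue of Proposition~\ref{moments}), there is a unital $*$-isomorphism $C^{\ast}(\vec A_0)\to C^{\ast}(\vec B)$, so $\mathcal{A}_0$ embeds unitally into $\mathcal{D}$ and $\dim\mathcal{A}_0\le\dim\mathcal{D}=L$.

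Combining the two steps, $\delta_{\text{\textrm{top}}}(\vec A_0)=1-1/\dim\mathcal{A}_0\le 1-1/\liminf_s\dim\mathcal{A}_s=\liminf_s\delta_{\text{\textrm{top}}}(\vec A_s)$; when $\dim\mathcal{A}_0=\infty$ the inequality of the previous paragraph forces $\liminf_s\dim\mathcal{A}_s=\infty$, i.e.\ $\delta_{\text{\textrm{top}}}(\vec A_s)\to 1=\delta_{\text{\textrm{top}}}(\vec A_0)$, so that case is covered as well. The only real work is the bookkeeping in the middle step — realizing the abstract finite-dimensional $\mathcal{A}_{s_j}$ inside a single fixed $\mathcal{D}$ and forcing their generating tuples into a common compact set so that a norm limit $\vec B$ exists, to which the norm-rigidity proposition can be applied; no covering-number or metric estimates are needed, the classes $\mathcal{S}$ and $\mathcal{W}$ and nuclearity entering only through Proposition~\ref{fox}.
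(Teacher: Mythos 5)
Your proof is correct and follows exactly the route the paper intends: the paper states this theorem as an immediate consequence of Proposition \ref{fox} (with no written proof), and your argument is precisely that reduction — replace $\delta_{\text{\textrm{top}}}\left(  \vec{A}_{s}\right)$ by $1-\frac{1}{\dim\mathcal{A}_{s}}$ via Proposition \ref{fox} and then verify lower semicontinuity of $\dim$ under convergence in topological distribution. Your compactness argument (fixing a common finite-dimensional model $\mathcal{D}$, extracting a norm-convergent subsequence of generating tuples, and applying the norm-rigidity proposition of Section 2) correctly supplies the details the paper leaves to the reader.
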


\bigskip

Without the restriction of being in $\mathcal{S}$ in the preceding theorem,
the semicontinuity situation is not very good, even when the limit algebra is commutative.

\begin{theorem}
Suppose $n\in\mathbb{N}$ and $C^{\ast}\left(  x_{1},\ldots,x_{n}\right)  $ is
MF and has a $1$-dimensional unital representation $\alpha$. Then there is a
sequence $\left\{  \vec{A}_{s}\right\}  $ such that
\[
\vec{A}_{s}\overset{t.d.}{\longrightarrow}\vec{x}%
\]
and, for every $s\geq1$, $\delta_{\text{top}}\left(  \vec{A}_{s}\right)  =0$.
\end{theorem}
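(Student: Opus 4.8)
The plan is to realize $\vec{x}$ as a topological-distribution limit of $n$-tuples that each generate a copy of $\mathcal{K}(H)+\mathbb{C}1$ on an infinite-dimensional Hilbert space $H$, so that Proposition \ref{compact} immediately gives $\delta_{\text{top}}(\vec{A}_{s})=0$. Write $\lambda_{j}=\alpha(x_{j})$, so that $|\lambda_{j}|\le\|x_{j}\|$; the case $\dim C^{\ast}(\vec{x})=1$ is trivial ($\vec{A}_{s}=\vec{x}$ works), so assume $\dim C^{\ast}(\vec{x})>1$. Since $\mathcal{A}$ is MF, fix matrices $\vec{B}_{k}\in\mathcal{M}_{m_{k}}^{n}(\mathbb{C})$ with $\vec{B}_{k}\overset{t.d.}{\longrightarrow}\vec{x}$. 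For each $s\ge1$ I will produce an $n$-tuple $\vec{A}_{s}$ of operators on $H=\ell^{2}$ with each $A_{sj}-\lambda_{j}1$ compact, with $C^{\ast}(\vec{A}_{s})$ irreducible, and with $\bigl|\,\|p(\vec{A}_{s})\|-\|p(\vec{x})\|\,\bigr|<1/s$ for every $p\in\mathbb{P}_{s}$. Granting this, for each fixed $\ast$-polynomial $p$ we have $p\in\mathbb{P}_{s}$ once $s$ is large, so $\|p(\vec{A}_{s})\|\to\|p(\vec{x})\|$, i.e.\ $\vec{A}_{s}\overset{t.d.}{\longrightarrow}\vec{x}$, while Proposition \ref{compact} applies to each $\vec{A}_{s}$ and yields $\delta_{\text{top}}(\vec{A}_{s})=0$.

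Fix $s$ and choose $N$ so large that $\bigl|\,\|p(\vec{B}_{N})\|-\|p(\vec{x})\|\,\bigr|<\tfrac{1}{3s}$ for all $p\in\mathbb{P}_{s}$ and $\|B_{jN}\|\le\|x_{j}\|+1$ for all $j$. Put $H=H_{0}\oplus H_{\infty}$ with $H_{0}=\mathbb{C}^{m_{N}}$, $H_{\infty}=\ell^{2}$, and $T_{j}^{(0)}=B_{jN}\oplus\lambda_{j}1_{H_{\infty}}$. Then $T_{j}^{(0)}-\lambda_{j}1=(B_{jN}-\lambda_{j}1_{H_{0}})\oplus0$ is finite rank, $\|T_{j}^{(0)}\|\le\|x_{j}\|+1$, and for every $\ast$-polynomial $p$,
\[
\|p(\vec{T}^{(0)})\|=\max\bigl(\|p(\vec{B}_{N})\|,\,|p(\vec\lambda)|\bigr)=\max\bigl(\|p(\vec{B}_{N})\|,\,|\alpha(p(\vec{x}))|\bigr).
\]
Since $|\alpha(p(\vec{x}))|\le\|p(\vec{x})\|$, for $p\in\mathbb{P}_{s}$ this quantity lies in the interval $\bigl(\|p(\vec{x})\|-\tfrac{1}{3s},\ \|p(\vec{x})\|+\tfrac{1}{3s}\bigr)$.

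The tuple $\vec{T}^{(0)}$ is reducible, and the hard part is to perturb it, by an arbitrarily small compact operator, into an irreducible one. Concretely, I would choose $\eta\le1$ (fixed below) and a compact operator $P$ with $\|P\|<\eta$ such that $K:=(B_{1N}-\lambda_{1}1_{H_{0}})\oplus0+P$ is an irreducible operator, equivalently $C^{\ast}(K)=\mathcal{K}(H)$ --- for instance by adding to the given finite-rank operator a weighted unilateral shift on an orthonormal basis of $H$ adapted to the splitting $H_{0}\oplus H_{\infty}$, with nonzero, pairwise distinct weights tending to $0$ (so $K$ is compact of small norm and $K^{\ast}K$ has infinitely many simple eigenvalues, forcing every reducing projection of $K$ to be $0$ or $1$); that such a $P$ exists is the one point requiring care, and is of the standard ``irreducible operators are norm dense'' type (with $P$ adapted to the finite-rank block if necessary). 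Now set $A_{s1}=T_{1}^{(0)}+P$ and $A_{sj}=T_{j}^{(0)}$ for $2\le j\le n$. Each $A_{sj}-\lambda_{j}1$ is compact, so $C^{\ast}(\vec{A}_{s})\subseteq\mathcal{K}(H)+\mathbb{C}1$, while $C^{\ast}(\vec{A}_{s})\supseteq C^{\ast}(A_{s1}-\lambda_{1}1)=C^{\ast}(K)=\mathcal{K}(H)$; hence $C^{\ast}(\vec{A}_{s})=\mathcal{K}(H)+\mathbb{C}1$, which acts irreducibly on $H$. (When the $x_{j}$ are self-adjoint one perturbs two of the generators by small self-adjoint compacts --- a diagonal operator with distinct eigenvalues tending to $0$ and an off-diagonal coupling --- whose compact parts jointly generate $\mathcal{K}(H)$, so the $A_{sj}$ remain self-adjoint.)

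Finally, set $R:=\max_{j}(\|x_{j}\|+2)$, so $\|\vec{T}^{(0)}\|\le R$ and $\|\vec{A}_{s}\|\le R$ for $\eta\le1$. Since every $p\in\mathbb{P}_{s}$ is Lipschitz on the ball $\{\,\|\vec{S}\|\le R\,\}$ with a constant depending only on $s,n,R$, choosing $\eta$ small enough forces $\bigl|\,\|p(\vec{A}_{s})\|-\|p(\vec{T}^{(0)})\|\,\bigr|<\tfrac{1}{3s}$ for all $p\in\mathbb{P}_{s}$, whence $\bigl|\,\|p(\vec{A}_{s})\|-\|p(\vec{x})\|\,\bigr|<\tfrac{2}{3s}<\tfrac{1}{s}$ for all $p\in\mathbb{P}_{s}$, as needed. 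Each $\vec{A}_{s}$ is thus an irreducible $n$-tuple of scalar$+$compact operators on an infinite-dimensional Hilbert space, so $\delta_{\text{top}}(\vec{A}_{s})=0$ by Proposition \ref{compact}, and the first paragraph completes the proof. The only genuinely nontrivial step is the construction of the small irreducibility-producing compact perturbation; everything else is routine approximation.
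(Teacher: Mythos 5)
Your proposal is correct and follows essentially the same route as the paper: direct-sum a matricial microstate with $\alpha(x_{j})\cdot 1$ on an infinite-dimensional space to get a scalar-plus-compact tuple with the right norm data, then make it irreducible by a small compact perturbation so that $C^{\ast}(\vec{A}_{s})=\mathcal{K}(H)+\mathbb{C}1$ and $\delta_{\text{top}}(\vec{A}_{s})=0$. The only cosmetic differences are that the paper handles the irreducibility step by citing Halmos's density theorem \cite{PRH} (which is exactly the ``point requiring care'' you sketch by hand) and concludes via the $\mathcal{A}/\mathcal{J}_{MF}$ dimension formula rather than Proposition \ref{compact}.
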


\begin{proof}
Suppose $s\in\mathbb{N}.$ Suppose $H$ is a separable Hilbert space that
contains $\mathbb{C}^{k}$ for each positive integer $k,$ and let $I_{k}$ be
the identity operator on $H\ominus\mathbb{C}^{k}.$ Since $\delta_{\text{top}%
}\left(  x_{1},\ldots,x_{n}\right)  $ is defined, there is a positive integer
$k$ and an $\vec{B}\in\mathcal{M}_{k}\left(  \mathbb{C}\right)  ^{n}$ such
that
\[
\left\vert \left\Vert p\left(  \vec{B}\right)  \right\Vert -\left\Vert
p\left(  \vec{x}\right)  \right\Vert \right\vert <\frac{1}{s}%
\]
for every $\ast$-polynomial $p\in\mathbb{P}_{s}\left(  t_{1},\ldots
,t_{n}\right)  $ (i.e., whose degree and maximum coefficient modulus do not
exceed $s$). Let $T_{j}=A_{j}\oplus\alpha\left(  x_{j}\right)  I_{k}$ for
$1\leq j\leq n.$ Then we clearly have
\[
\left\vert \left\Vert p\left(  \vec{T}\right)  \right\Vert -\left\Vert
p\left(  \vec{x}\right)  \right\Vert \right\vert <\frac{1}{s}%
\]
for every $\ast$-polynomial $p\in\mathbb{P}_{s}\left(  t_{1},\ldots
,t_{n}\right)  $. It is obvious that the set of all $\vec{S}=\left(
S_{1},\ldots,S_{n}\right)  \in\left(  \mathcal{K}\left(  H\right)
+\mathbb{C}1\right)  ^{n}\ $such that%
\[
\left\vert \left\Vert p\left(  \vec{S}\right)  \right\Vert -\left\Vert
p\left(  \vec{x}\right)  \right\Vert \right\vert <\frac{1}{s}%
\]
for every $\ast$-polynomial $p\in\mathbb{P}_{s}\left(  t_{1},\ldots
,t_{n}\right)  $ is open. It follows from \cite{PRH} there is an $\vec{A}%
_{s}\in\left(  \mathcal{K}\left(  H\right)  +\mathbb{C}1\right)  ^{n}$ such
that $\mathcal{A}_{s}=C^{\ast}\left(  \vec{A}_{s}\right)  \ $is irreducible
and
\[
\left\vert \left\Vert p\left(  \vec{A}_{s}\right)  \right\Vert -\left\Vert
p\left(  \vec{x}\right)  \right\Vert \right\vert <\frac{1}{s}%
\]
for every $\ast$-polynomial $p\in\mathbb{P}_{s}\left(  t_{1},\ldots
,t_{n}\right)  $. Clearly, $\vec{A}_{s}\overset{t.d.}{\longrightarrow}\vec{x}%
$. Since each $\mathcal{A}_{s}=C^{\ast}\left(  \vec{A}_{s}\right)  \ $is
irreducible, $\mathcal{A}_{s}=\mathcal{K}\left(  H\right)  +\mathbb{C}1$. Thus
we conclude that $\mathcal{J}_{MF}\left(  \mathcal{A}_{s}\right)
=\mathcal{K}\left(  H\right)  $ and $\mathcal{A}_{s}/\mathcal{J}_{MF}\left(
\mathcal{A}_{s}\right)  =\mathbb{C}1.$ Thus $\delta_{\text{top}}\left(
\vec{A}_{s}\right)  =0$.
\end{proof}

\end{document}